\documentclass{article}

\usepackage{amsmath,amsfonts,latexsym,amssymb,amsthm,url, titling}

\newtheorem{theorem}{Theorem}[section]
\newtheorem{proposition}[theorem]{Proposition}
\newtheorem{corollary}[theorem]{Corollary}
\newtheorem{lemma}[theorem]{Lemma}
\newtheorem{remark}[theorem]{Remark}

\newtheorem{example}[theorem]{Example}
\newtheorem{conjecture}[theorem]{Conjecture}

\DeclareMathOperator{\cond}{cond}

\DeclareMathOperator{\height}{h}
\DeclareMathOperator{\lgcd}{lgcd}
\DeclareMathOperator{\gcds}{{{\textstyle \gcd_S}}}

\newcommand{\Q}{\mathbb{Q}}

\newcommand{\Z}{\mathbb{Z}}

\newcommand{\II}{\mathcal{I}}
\newcommand{\NN}{\mathcal{N}}
\newcommand{\OO}{\mathcal{O}}

\newcommand{\gerp}{\mathfrak{p}}

\newcommand{\ugamma}{{\underline{\gamma}}}
\newcommand{\udelta}{{\underline{\delta}}}

\newcommand{\ub}{{\underline{b}}}
\newcommand{\uc}{{\underline{c}}}
\newcommand{\um}{{\underline{m}}}
\newcommand{\un}{{\underline{n}}}
\newcommand{\ut}{{\underline{t}}}
\newcommand{\uu}{{\underline{u}}}
\newcommand{\ux}{{\underline{x}}}
\newcommand{\uy}{{\underline{y}}}
\newcommand{\uz}{{\underline{z}}}
\newcommand{\uw}{{\underline{w}}}

\newcommand{\eps}{\varepsilon}
\newcommand{\ph}{\varphi}

\title{Multiplicative dependence in the sumset of  multiplicative groups}

\author{Yuri~Bilu, Florian~Luca}

\date{\today}

\numberwithin{equation}{section}

\makeatletter

\renewcommand*\l@section[2]{
	\ifnum \c@tocdepth >\z@
	\addpenalty\@secpenalty
	\addvspace{0.2em \@plus\p@}
	\setlength\@tempdima{1.5em}
	\begingroup
	\parindent \z@ \rightskip \@pnumwidth
	\parfillskip -\@pnumwidth
	\leavevmode \bfseries
	\advance\leftskip\@tempdima
	\hskip -\leftskip
	#1\nobreak\hfil \nobreak\hb@xt@\@pnumwidth{\hss #2}\par
	\endgroup
	\fi}

\makeatother

2

\begin{document}
\hfuzz=4pt

\maketitle

\begin{abstract}
Let~$\Gamma$ and~$\Delta$ be finitely generated multiplicative groups of algebraic numbers such that ${\Gamma\cap\Delta}$ is a finite group. We show that, up to finitely many exceptions, non-zero sums ${x_1+y_1}$ and ${x_2+y_2}$, with ${x_1, x_2\in \Gamma}$ and ${y_1,y_2\in \Delta}$, are multiplicatively dependent only if ${x_1/x_2=y_1/y_2}$ is a root of unity. For ${m\ge 3}$, we discuss possible shapes of~$m$ multiplicatively dependent sums ${x_1+y_1, \ \ldots, \ x_m+y_m}$ with ${x_1, \ldots, x_m \in \Gamma}$ and ${y_1, \ldots, y_m \in \Delta}$.  For ${m=3}$ we classify such sums, up to finitely many exceptions, assuming the $abc$-conjecture. 
\end{abstract}

 {\footnotesize
 
\tableofcontents

}

\section*{Preface (by Yuri Bilu)}

Mu coauthor and friend Florian Luca died suddenly during the final preparation of this manuscript. He had no chance to proofread it. I assume full responsibility for all the remaining anomalies. 

\section{Introduction}
\label{sintro}

Let~$\Gamma$ and~$\Delta$ be  finitely generated subgroups of~$\bar\Q^\times$. Call them \textit{almost disjoint} if the intersection $\Gamma\cap\Delta$ is finite.  We are interested in the following question: how often non-zero elements of the sumset 
$$
\Gamma+\Delta:=\{x+y:  x\in \Gamma, y \in \Delta\}
$$
can be multiplicatively dependent?  To make it more precise, for every positive integer~$m$ we want to classify, up to finitely many exceptions, possible  ${x_1, \ldots, x_m\in \Gamma}$ and ${y_1, \ldots, y_m\in \Delta}$ such that the~$m$ sums 
$
{x_1+y_1,\ \ldots,\ x_m+y_m}
$ 
are all non-zero and multiplicatively dependent.

For ${m=1}$ this reduces to the following question: find all ${x\in \Gamma}$ and ${y\in \Delta}$ such that ${x+y}$ is a root of unity. This is the classical \textit{binary unit equation},  and the result (essentially, due to Baker) is well-known.

\begin{theorem}
\label{thmone}
There exist at most finitely many couples ${(x,y)\in \Gamma\times \Delta}$ such that ${x+y}$ is a root of unity, and all of them can be effectively determined.
\end{theorem}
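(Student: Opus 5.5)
Here $\Gamma$ and $\Delta$ are almost disjoint, as in the standing convention; without that hypothesis the statement fails, since with $\Gamma=\Delta$ one has $x+(1-x)$-type counterexamples. The plan is to reduce to the classical $S$-unit equation in two variables and invoke Baker's effective bound for it.

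Fix a number field $K$ containing the generators of $\Gamma$ and $\Delta$. Fix also a finite set of places $S$ of $K$, containing the archimedean ones, such that all these generators are $S$-units. Then $\Gamma,\Delta\subset\OO_S^\times$. If $x+y=\zeta$ is a root of unity, then $\zeta\in K$, so $\zeta$ belongs to the finite group $\mu_K$ of roots of unity of $K$.

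Divide by $\zeta$ to get $x\zeta^{-1}+y\zeta^{-1}=1$, an equation $u+v=1$ in $S$-units $u,v$. By Baker's theory of linear forms in logarithms, together with its $p$-adic analogue (van der Poorten, Yu) for the non-archimedean places in $S$, this equation has finitely many solutions. Moreover, all of them have height bounded by an effectively computable constant $C(K,S)$. Hence there are finitely many effectively computable pairs $(u,v)$, and finitely many $\zeta\in\mu_K$. So $x=\zeta u$ and $y=\zeta v$ range over an effectively determined finite set. Deciding which candidates lie in $\Gamma\times\Delta$ is effective, by solving a multiplicative relation problem among finitely many algebraic numbers, which can be done via bounds on exponents (Loxton–van der Poorten / Masser).

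Almost disjointness has not been used so far. The hypothesis matters only for the degenerate cases one might worry about, namely $x+y=0$. These cannot occur here, because a root of unity is non-zero. So in fact finiteness holds without it. The one step that is not routine is the effective height bound for $u+v=1$. This is the deep input, Baker's theorem, which I will quote rather than reprove: the archimedean estimate of Baker and Baker–Wüstholz, combined with the $p$-adic estimates to control the $S$-adic sizes of $u$ and $v$ at every place of $S$.
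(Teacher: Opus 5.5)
Your argument is correct and is essentially the paper's. You reduce to $x+y=\zeta$ for each of the finitely many roots of unity $\zeta\in K$, hence to the $S$-unit equation $u+v=1$, whose solutions have effectively bounded height by Baker's theory; the paper proves this bound in Theorem~\ref{thunits} from Corollary~\ref{cobaker} via local heights rather than quoting it.

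However, drop your opening claim that the statement fails without almost disjointness. Taking $\Gamma=\Delta$ and $y=1-x$ requires $x,1-x\in\Gamma$, which is itself a unit equation with finitely many solutions, so there is no counterexample. Your own closing remark, like the paper, correctly says the hypothesis is not needed here.
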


Note that in this case we do not need~$\Gamma$ and~$\Delta$ to be almost disjoint. We  recall the proof in Subsection~\ref{sslocalh}, see Theorem~\ref{thunits} therein. 

\subsection{Results}

Already for ${m=2}$ there is an infinite family of multiplicatively dependent cases. Let ${x\in \Gamma}$ and ${y\in \Delta}$ be such that ${x+y\ne 0}$, and let~$\xi$ be a root of unity belonging to both~$\Gamma$ and~$\Delta$. Then the sums ${x+y}$ and ${\xi x+\xi y}$ are multiplicatively dependent. Our first principal result is the following theorem, which confirms that, up to finitely many exceptions, these are the only examples.

\begin{theorem} 
\label{thmtwo}
Let~$\Gamma$ and~$\Delta$ be almost disjoint finitely generated multiplicative groups of algebraic numbers. 
Let ${x_1, x_2\in \Gamma}$ and ${y_1, y_2\in \Delta}$ be such that the sums ${x_1+y_1}$ and ${x_2+y_2}$ are non-zero  and multiplicatively dependent. Then, with finitely many exceptions, ${x_1/x_2=y_1/y_2}$ is a root of unity. 
\end{theorem}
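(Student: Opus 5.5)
We work over a number field~$K$ containing the generators of~$\Gamma$ and~$\Delta$, and fix a finite set~$S$ of places of~$K$, containing the archimedean ones, such that~$\Gamma$ and~$\Delta$ consist of $S$-units. Suppose ${(x_1+y_1)^{a}=(x_2+y_2)^{b}}$ with ${(a,b)\ne(0,0)}$. If ${a=0}$ or ${b=0}$, one of the sums is a root of unity, and Theorem~\ref{thmone} leaves finitely many pairs for that sum. This does not finish this subcase, since the other sum is still free. Instead, we choose the unknowns so that all of them are bounded at once. The natural unknowns are the ratios ${x_2/x_1\in\Gamma}$ and ${y_2/y_1\in\Delta}$ together with the "shape" ${y_1/x_1}$. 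By homogeneity the dependence is essentially a relation on the points ${[x_1:y_1]}$ and ${[x_2:y_2]}$, modulo scaling by a common factor.

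\textbf{Step 1: reduction to bounded height via $S$-unit equations.} Write ${u_i=y_i/x_i}$, so ${x_i+y_i=x_i(1+u_i)}$ with ${u_i\in\Gamma\Delta}$, a finitely generated group. Let~$T$ be~$S$ enlarged so that it contains all primes dividing ${1+u_1}$. Then ${1+u_2}$ has the same prime support outside~$S$ up to the dependence relation. The key observation is a gcd-type estimate: primes outside~$S$ dividing ${1+u_1}$ and ${1+u_2}$ simultaneously must divide a combination like ${u_1-u_2}$, hence ${x_2y_1-x_1y_2}$. We then use the subspace theorem, in the form of the Corvaja--Zannier / Evertse gcd bounds for $S$-units, or its effective Baker-type counterpart. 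This yields that ${\gcd(1+u_1,1+u_2)}$ is small compared to ${\max(\height(u_1),\height(u_2))}$ unless~$u_1$ and~$u_2$ are multiplicatively related in a specific way. Multiplicative dependence forces the non-$S$ parts of ${1+u_1}$ and ${1+u_2}$ to be powers of a common ideal. So either the $S$-parts dominate, or the gcd is large.

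\textbf{Step 2: exploiting the $S$-part.} Dependence gives ${x_1^a(1+u_1)^a=x_2^b(1+u_2)^b}$. Decompose at each ${v\in S}$. Here almost disjointness enters: ${\Gamma\cap\Delta}$ is finite, so the map ${\Gamma\times\Delta\to\Gamma\Delta}$ has finite kernel. Consequently an identity ${\gamma=\delta}$ with ${\gamma\in\Gamma}$ and ${\delta\in\Delta}$ forces both to be torsion. Applying the theorem that ${1+u}$ is "almost an $S$-unit" only for finitely many~$u$, we get the following. For all but finitely many configurations, the equality of $S$-valuations forces proportionality of exponent vectors: ${x_1^a/x_2^b}$ and ${y_1^a/y_2^b}$ must each be close to torsion, and ${a=b}$ in the generic case. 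Plugging back in, ${(x_1+y_1)^a=(x_2+y_2)^a}$. Hence ${x_2+y_2=\zeta(x_1+y_1)}$ for a root of unity~$\zeta$, i.e.
$$
x_2-\zeta x_1 = \zeta y_1-y_2 .
$$
This is a unit equation in the group generated by~$\Gamma$, $\Delta$ and~$\zeta$. By the finiteness of non-degenerate solutions of ${S}$-unit equations in three terms, either we are among finitely many exceptions (up to the common scaling), or some subsum vanishes. A vanishing subsum gives ${x_2=\zeta x_1}$, ${y_2=\zeta y_1}$, whence ${x_1/x_2=y_1/y_2=\zeta^{-1}}$ is a root of unity. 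The alternative is ${x_2=-\zeta y_1}$, which together with almost disjointness yields only torsion and finitely many cases.

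\textbf{Main obstacle.} The difficulty lies in Step~1–2: bounding the exponents~$a$, $b$ and ruling out ${a\ne b}$. This also covers the ${a=0}$ or ${b=0}$ subcase. Here the exceptional set must be finite in absolute terms rather than up to scaling, and this must be shown uniformly. I would try first a height argument. If ${a\ne b}$, then ${\height(x_1+y_1)/\height(x_2+y_2)=b/a}$. The heights ${\height(x_i+y_i)}$ are comparable to ${\height(x_i)+\height(y_i)}$ by the subspace theorem. This gives linear constraints which, combined with the $S$-adic comparison and the finite kernel of ${\Gamma\times\Delta\to\Gamma\Delta}$, should confine ${(x_1,y_1,x_2,y_2)}$ to a finite set.
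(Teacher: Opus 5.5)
Your endgame is fine. Once $x_2+y_2=\zeta(x_1+y_1)$, the equation $x_2+y_2-\zeta x_1-\zeta y_1=0$ is handled as in Proposition~\ref{proo}. It is a \emph{four}-term unit equation, so it needs Evertse--Schlickewei--van der Poorten. Almost disjointness makes each equivalence class finite, which settles your worry about ``finite up to scaling''.

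The gap is the reduction to that case. After dividing out $\gcd(a,b)$ you need $|a|=|b|=1$ outside a finite set, and nothing in the proposal proves it.
\begin{itemize}
\item Comparing $S$-adic valuations cannot do it. By Baker, $\log|x_i+y_i|_v=\log\max\{|x_i|_v,|y_i|_v\}+O(\log\height)$ for $v\in S$. These relations are satisfied equally well by $x_2=x_1^2$, $y_2=y_1^2$ with $(a,b)=(2,1)$. Whether $x_1^2+y_1^2$ is really a root of unity times $(x_1+y_1)^2$ is decided only at primes outside $S$.
\item The height identity $a\,\height(x_1+y_1)=b\,\height(x_2+y_2)$ is a single linear constraint and confines nothing to a finite set.
\item The Corvaja--Zannier route of Step~1 does not close the gap. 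Theorem~\ref{thcorza} needs $y_1/x_1$ and $y_2/x_2$ multiplicatively independent, but your target family $y_1/x_1=y_2/x_2$ is dependent, and you give no separate analysis of the dependent case. In the independent case, the gcd forced by the dependence is only about $\frac{\min\{|a|,|b|\}}{\max\{|a|,|b|\}}\max_i\height(y_i/x_i)$. So you get a contradiction only if $a/b$ is bounded, which is again the point at issue.
\end{itemize}

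The missing idea is the finiteness of perfect powers in $\Gamma+\Delta$. With $\gcd(a,b)=1$ the dependence reads $(x_1+y_1)^a=\zeta(x_2+y_2)^b$. Hence $x_1+y_1=\xi_1z^{b}$ and $x_2+y_2=\xi_2z^{a}$ for some $z\in K^\times$ and roots of unity $\xi_i$; say $|b|\ge|a|$.
\begin{itemize}
\item If $z$ is a root of unity, Theorem~\ref{thmone} applies.
\item If $|b|\ge2$, then $x_1+y_1$ is, up to a root of unity, a perfect power. Theorem~\ref{thppz} with Corollary~\ref{coadis} bounds $x_1,y_1$. Its proof bounds the exponent by Baker (the Schinzel--Tijdeman argument of Proposition~\ref{prschtijd}) and then solves $AX^3+1=BY^n$ for bounded $n$. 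Then $\height(z^{a})\le\height(z^{b})$ is bounded, and Theorem~\ref{thunits} bounds $x_2,y_2$.
\item If $a=-b=\pm1$, the product of the sums is a root of unity. Both sums are then $S$-units, and the binary unit equation finishes.
\item Only $a=b=\pm1$ leads to your four-term equation.
\end{itemize}
This is the paper's proof.

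Finally, do not try to ``cover'' the subcase $ab=0$. It is a genuine infinite family of exceptions whenever some $x+y$ with $x\in\Gamma$, $y\in\Delta$ is a root of unity. For example, take $\Gamma=\langle -1,2\rangle$, $\Delta=\langle 3\rangle$, $(x_2,y_2)=(-2,3)$ and $(x_1,y_1)=(2^k,3)$. The sums $2^k+3$ and $1$ are dependent, yet $x_1/x_2=-2^{k-1}\ne1=y_1/y_2$. The statement must be read with neither sum a root of unity. The paper's proof assumes this tacitly when it writes $x_i+y_i=\xi_iz^{n_i}$ with $n_1n_2\ne0$.
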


Unlike Theorem~\ref{thmone}, in Theorem~\ref{thmtwo} the hypothesis of almost disjointness cannot be dropped, as the following example shows.

\begin{example}
\label{expothree}
Set ${\Gamma:=\langle -1, 3\rangle}$ and ${\Delta:=\langle 2,3\rangle}$. Then ${\Gamma+\Delta}$ contains all the powers of~$3$. 
\end{example}

Theorem~\ref{thmtwo} is non-effective in general: we do not bound effectively the heights of the ``finitely many exceptions''. However, it is effective in the case when both~$\Gamma$ and~$\Delta$ are of rank~$1$; see Remark~\ref{reff} for the details. 




When ${m=3}$, two new infinite families of multiplicatively dependent sums emerge. 
Let ${x\in \Gamma}$ and ${y\in \Delta}$ be such that ${x^2\ne y^2}$. Then 
\begin{equation}
\label{efamthreeone}
x-y, \quad x+y, \quad x^2-y^2
\end{equation}
are multiplicatively dependent. 


To introduce another infinite family of three multiplicatively dependent sums, let us make the following observation: if ${x_1, x_2\in \Gamma}$ and ${y_1,y_2\in \Delta}$ are such that ${x_1+y_1}$ and ${x_2y_2}$ are multiplicatively dependent, then the three sums 
\begin{equation}
\label{ethreesumsminusone}
x_1+y_1, \ x_2+y_2, \ x_2^{-1}+y_2^{-1} 
\end{equation}
are multiplicatively dependent. Moreover,  if ${x_1+y_1}$ and ${x_2y_2}$ are multiplicatively dependent, then so are ${x_1+y_1}$ and ${x_2^\ell y_2^\ell}$ for any positive integer~$\ell$. If $x_2y_2$ is not a root of unity\footnote{By almost disjointness, it is equivalent to saying that at least one of them is not a root of unity.}, then we obtain an infinite family 
\begin{equation}
\label{ethreesumsminusonell}
x_1+y_1, \ x_2^\ell+y_2^\ell, \ x_2^{-\ell}+y_2^{-\ell} 
\end{equation}
of three multiplicatively dependent sums in ${\Gamma+\Delta}$. 

It is not completely obvious whether almost disjoint~$\Gamma$ and~$\Delta$ with elements ${x_1,x_2,y_1,y_2}$ as above exist. The following simple example shows that it is indeed the case. 

\begin{example}
\label{exsecond}
Let ${x_1,y_1\ge 2}$ be coprime integers, and factorize   ${x_1+y_1}$ into coprime factors: ${x_1+y_1=x_2y_2}$ with~$x_2$ and~$y_2$ coprime. Then the four integers $x_1,y_1,x_2,y_2$ are pairwise coprime, which implies that the multiplicative groups 
${\Gamma:=\langle x_1,x_2\rangle}$ and  ${\Delta:= \langle y_1,y_2\rangle}$
are almost disjoint. For instance, ${3+7=2\cdot5}$, and the groups ${\langle 3,2\rangle}$, ${\langle 7,5\rangle}$ are almost disjoint. 
\end{example}

Note also that if ${x_1+y_1,\ x_2+y_2,\  x_3+y_3}$ are multiplicatively dependent, then so are  ${x_1+y_1, \ \xi_2(x_2+y_2),\  \xi_3(x_3+y_3)}$, where~$\xi_2$ and~$\xi_3$ are roots of unity. 

Our second principal result is that  the $abc$-conjecture, or, more precisely, the weak version of this conjecture, see Conjecture~\ref{cowabc} in Subsection~\ref{ssabc},  implies that, with finitely many exceptions,~\eqref{efamthreeone} and~\eqref{ethreesumsminusone} are the only examples of three multiplicatively dependent sums, up to permutation and up to multiplying by roots of unity as above.  

\begin{theorem}
\label{thmthree}
Let~$K$ be a number field containing both~$\Gamma$ and~$\Delta$. Assume that  Conjecture~\ref{cowabc} holds for the field~$K$. 
Let 
${x_1,x_2,x_3\in \Gamma}$ and ${y_1,y_2,y_3\in \Delta}$ be such that the sums 
$
{x_1+y_1, \ x_2+y_2, \ x_3+y_3}
$
are non-zero and multiplicatively dependent, but any two of them are mutiplicatively independent. Then, with finitely many exceptions, after permuting the pairs $(x_k,y_k)$, one of the following holds.
\begin{enumerate}
\item
There exist ${x\in \Gamma}$ and ${y\in \Delta}$  such that
\begin{equation}
\label{emthree}
(x_1,x_2,x_3)= (x,\   \xi_2 x,\ \xi_3x^2), \qquad (y_1,y_2,y_3)= (-y, \ \xi_2y,\ -  \xi_3y^2) 
\end{equation}
with some  roots of unity ${\xi_2,\xi_3\in \Gamma\cap \Delta}$. 
\item
\label{ioptiontwo}
The numbers ${x_1+y_1,\ x_2y_2}$  are multiplicatively dependent  and 
\begin{equation}
\label{ethreetwomone}
x_3=\xi x_2^{-1}, \qquad y_3=\xi y_2^{-1},
\end{equation}
where~$\xi$ is a root of unity.

\end{enumerate}
\end{theorem}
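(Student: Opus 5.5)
We start from a relation
\begin{equation*}
(x_1+y_1)^{a_1}(x_2+y_2)^{a_2}(x_3+y_3)^{a_3}=\text{root of unity},
\end{equation*}
with all $a_k\ne0$; all three exponents are non-zero because any two of the sums are independent. After replacing some sums by their inverses we may assume $a_1,a_2>0$. The plan has two stages: first bound the exponents, then reduce the relation to a unit-type equation in $\Gamma\Delta$ and read off the two families.

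\textbf{Exponents.} We work $S$-adically, with $S$ a finite set of places of $K$ containing the archimedean ones and such that $\Gamma,\Delta$ are $S$-units. For each non-$S$ prime $\gerp$, the relation forces $\sum_k a_k v_\gerp(x_k+y_k)=0$. So the non-$S$ part of each sum divides, up to $S$-units, the radical of the product of the other two. We apply Conjecture~\ref{cowabc} to each $S$-unit equation $x_k+y_k=z_k$. This gives $\height(x_k:y_k)\le(1+\eps)\log\NN(\text{rad of } z_k \text{ outside } S)+O_\eps(1)$. Combining with the valuation relation should show that if $\max|a_k|$ is large, one sum is essentially an $S$-unit times a high power. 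That contradicts weak $abc$ unless the heights are bounded, which is the finitely many exceptions. Hence we may assume $(a_1,a_2,a_3)$ ranges over a finite set. We fix it, and write $a_3=-b$ if negative, so that the relation becomes $(x_1+y_1)^{a_1}(x_2+y_2)^{a_2}=\zeta(x_3+y_3)^{b}$ or a similar expression with all exponents on one side.

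\textbf{Expansion.} We expand both sides. The result is a polynomial identity whose monomials are elements of $\Gamma\Delta$, since products $x^iy^j$ lie in the finitely generated group $\Gamma\Delta$. By the subspace theorem (finiteness of non-degenerate solutions of $S$-unit equations), apart from finitely many projective solutions, some subsum vanishes. Almost disjointness lets us identify $\Gamma$- and $\Delta$-parts of a monomial up to roots of unity, modulo the finite group $\Gamma\cap\Delta$. We analyse the vanishing subsums by the case analysis familiar from Theorem~\ref{thmtwo}: matching the extreme monomials (pure powers of $x$'s and pure powers of $y$'s) on the two sides. This yields identities such as $x_1^{a_1}x_2^{a_2}=\zeta' x_3^{b}$ and the analogue for the $y$'s, and then forces the middle terms to cancel.

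\textbf{Identifying the families.} The cancellation should pin down the exponent vector as $(1,1,-1)$ with $x_2=\xi_2x_1$ and $y_2=-\xi_2y_1$, giving~\eqref{emthree}. Alternatively, a sign pattern where $x_3+y_3=x_3y_3(x_3^{-1}+y_3^{-1})$ appears, which gives option~\ref{ioptiontwo}. Here the factor $x_2y_2$ plays the role of a free $S$-unit, so $x_1+y_1$ and $x_2y_2$ are dependent while $x_3=\xi x_2^{-1}$ and $y_3=\xi y_2^{-1}$.

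\textbf{Main obstacle.} The main obstacle is the exponent bound. The $abc$ inequality must be balanced carefully against three sums whose radicals overlap, and cases where one sum is nearly an $S$-unit (so that its radical is small) need Theorem~\ref{thmone}-type arguments. I would first try to treat the sum with the largest $|a_k|$ via Conjecture~\ref{cowabc} and the others via trivial height bounds.
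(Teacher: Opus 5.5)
Your first stage cannot succeed, because its conclusion is false: the exponents are \emph{not} bounded, even after discarding finitely many exceptions.

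Take Example~\ref{exsecond} with $\Gamma=\langle 2,3\rangle$, $\Delta=\langle 5,7\rangle$, $(x_1,y_1)=(3,7)$, $(x_2,y_2)=(2^\ell,5^\ell)$ and $(x_3,y_3)=(2^{-\ell},5^{-\ell})$. The sums are $10$, $2^\ell+5^\ell$ and $2^{-\ell}+5^{-\ell}=(2^\ell+5^\ell)/10^\ell$. They are pairwise multiplicatively independent for every $\ell\ge1$, since $2^\ell+5^\ell>1$ is coprime to~$10$. Up to proportionality their only relation is $10^{\ell}(2^{-\ell}+5^{-\ell})(2^\ell+5^\ell)^{-1}=1$, with primitive exponent vector $(\ell,-1,1)$. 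These triples are genuine members of option~\ref{ioptiontwo}, not exceptions.

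Conversely, for a fixed exponent vector, option~\ref{ioptiontwo} yields only finitely many triples. Indeed, $x_1+y_1$ lies in $\sqrt{\langle\Gamma,\Delta\rangle}\cap K^\times$, so $(x_1,y_1)$ is confined to a finite set by Theorem~\ref{thunits} and Corollary~\ref{coadis}. Then $x_2y_2$ is determined up to finitely many choices, hence so are $x_2,y_2$ (by almost disjointness), and hence $x_3,y_3$. So your two stages together would ``prove'' that option~\ref{ioptiontwo} occurs only finitely often.

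The example also shows where your heuristic breaks. Here $x_1+y_1=10$ is an $S$-unit, and $abc$ does bound $x_1,y_1$, but this says nothing about $x_2,y_2$. The paper needs no exponent bound at all. It sets $z_k:=-x_k/y_k$ and $\Lambda:=\sqrt{\langle\Gamma,\Delta,-1\rangle}\cap K^\times$. The degenerate situations (some $z_k-1\in\Lambda$, or two of the $z_k-1$ dependent modulo~$\Lambda$) are handled by Theorem~\ref{thmodgtwo}. This gives $z_3=z_2^{-1}$, i.e.\ option~\ref{ioptiontwo} with arbitrary exponents.

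Even in the non-degenerate case, your mechanism lacks its key ingredient. The valuation relation $\sum_k a_k v_\gerp(x_k+y_k)=0$ does not make any sum close to a high power. For exponents $(N,N+1,-(2N+1))$ it holds when all three sums are squarefree outside~$S$ with the same prime support. Then weak $abc$ gives no contradiction; note also that Conjecture~\ref{cowabc} provides only some constant~$\kappa$, not $1+\eps$.

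What excludes this is an upper bound for the \emph{common} part of $z_j-1$ and $z_k-1$. The paper does this in the proof of Theorem~\ref{thmodgthree}, applied to $z_1,z_2,z_3$ renamed $x,y,z$. It combines $T_z=T_x\cup T_y$ with the Corvaja--Zannier bound (Theorem~\ref{thcorza}) for multiplicatively independent pairs. For dependent pairs it uses the resultant and primitive-divisor arguments (Propositions~\ref{prmultdep} and~\ref{prmultdepen}). Together these show that $z=x^n$ or $z=y^r$ with exponent at most $3\kappa$ in absolute value (Proposition~\ref{prnorr}). Only after that do Theorem~\ref{thppz} and Corollary~\ref{coratpow} force the exponents to be $(1,1,-1)$, and a specific six-term unit equation yields $y=-x^{\pm1}$, $z=y^{\pm2}$.

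Your second stage (``matching extreme monomials'') is also not well defined for unit equations unless you fix a valuation. Still, the decisive gaps are the false finiteness of the exponent vectors and the missing gcd bound.
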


\subsection{A conjecture}

Now we intend to give a conjectural description of~$m$ multiplicatively dependent sums for arbitrary~$m$. To state our conjecture, we need some definitions. Note that these definitions will be used only in this subsection, and not elsewhere in the article. 

A pair  $(\ux,\uy)$ of $m$-dimensional vectors 
 ${\ux:=(x_1, \ldots, x_m)}$ and ${\uy:=(y_1, \ldots, y_m)}$ s with entries in~$\Gamma$, respectively,~$\Delta$, will be briefly called $m$-pair.  We call the $m$-pair $(\ux,\uy)$ \textit{multiplicatively dependent} (MD in the sequel) if the sums 
\begin{equation}
\label{emsums}
x_1+y_1, \ldots , x_m+y_m
\end{equation}
are non-zero and multiplicatively dependent.  We call it \textit{minimally mutiplicatively dependent} (MMD in the sequel) if it is MD and any ${m-1}$ sums among~\eqref{emsums} are multiplicatively independent.

We call two $m$-pairs  $(\ux,\uy)$ and $(\ux',\uy')$  \textit{torsion-equivalent} if 
$$
x_k'=\xi_k x_k, \quad y_k'= \xi_k y_k  \qquad (k=1, \ldots, m), 
$$
where ${\xi_1, \ldots, \xi_m}$ are roots of unity. Clearly, if one of torsion equivalent pairs is MD or MMD, then so is the other. Note also that groups~$\Gamma$ and~$\Delta$ contain only finitely many roots of unity, which implies that each torsion-equivalence class is finite. 

The \textit{concatenation} of two vectors  ${\ux:=(x_1, \ldots, x_m)}$ and ${\ux':=(x_1', \ldots, x_r')}$ is defined by
${\ux\sqcup\ux':=(x_1, \ldots, x_m, x_1', \ldots, x_r')}$. 
The concatenation of pairs is defined entry-wise: 
${(\ux,\uy)\sqcup(\ux',\uy'):= (\ux\sqcup\ux', \uy\sqcup\uy')}$. 
In a similar fashion one defines the concatenation of several vectors or pairs of vectors.

Families~\eqref{efamthreeone}  generalize to arbitrary ${m\ge 2}$. Assume  that~$\Delta$ contains the primitive root of unity $\zeta_{m-1}$.
Let ${x\in \Gamma}$ and ${y\in \Delta}$ be such that ${x^{m-1}\ne y^{m-1}}$.  Then the~$m$ sums 
\begin{equation}
\label{emexample}
x-y, \  x-\zeta_{m-1}y , \ \ldots ,\  x-\zeta_{m-1}^{m-2}y , \  x^{m-1}- y^{m-1} 
\end{equation}
are multiplicatively dependent.  We call $(\ux,\uy)$, where 
$$
\ux:=(\underbrace{x, \ldots, x}_{m-1}, x^{m-1}), \qquad \uy:=-(y,\ \zeta_{m-1}y,\ \ldots,\ \zeta_{m-1}^{m-2}y,\ y^{m-1}), 
$$
the \textit{canonical $m$-pair generated by $(x,y)$}.  Clearly, it is MD. We do not claim that it is always MMD, but it is, with finitely many exceptions (for a fixed~$m$) if Conjecture~\ref{comain} below is true. 
Note that for ${m=2}$, the canonical $2$-pair generated by $(x,-y)$ is  is ${(x,x), (y,y)}$. 

Now set 
\begin{align*}
\ux&:=(x,x^{\eps_2},  \ldots, x^{\eps_{m-1}}, x^{(m-1)\eps_m}),\\
\uy&:=-(y,\ (\zeta_{m-1}y)^{\eps_2},\ \ldots,\ (\zeta_{m-1}^{m-2}y)^{\eps_{m-1}},\ y^{(m-1)\eps_m}), 
\end{align*}
where ${\eps_2, \ldots, \eps_m \in \{1,-1\}}$, and at least one of~$\eps_k$ is~$-1$. Then we call $(\ux,\uy)$ a \textit{twisted $m$-pair generated by $(x,y)$}. A twisted pair is not MD\footnote{To be precise, it can accidentally be MD, but usually it is not.}, but its sums~\eqref{emsums} have a multiplicative combination equal to a power of $xy$. 

Before stating the conjecture, let us give some examples of $4$ multiplicatively dependent sums: 
\begin{alignat*}2
& x-y,\ x-\zeta_3 y,\ x-\zeta_3^2 y,\ x^3-y^3;\\
&x_1+y_1,\ x_2-y_2,\ x_2^{-1}+y_2^{-1},\ x_2^2-y_2^2\qquad  && \text{($x_1+y_1$ and $x_2y_2$ are} \nonumber\\ 
&\qquad && \text{multiplicatively dependent)};\\
&x_1+y_1,\ x_2+y_2, \ x_3+y_3, \ x_3^{-1}+y_3^{-1} \qquad  && \text{($x_1+y_1$, $x_2+y_2$ and $x_3y_3$ are} \nonumber\\ 
&\qquad && \text{multiplicatively dependent)};\\
&x_1+y_1, \ x_1^{-1}+y_1^{-1}, \ x_2+y_2,\ x_2^{-1}+y_2^{-1} \qquad && \text{($x_1y_1$ and $x_2y_2$ are} \nonumber\\ 
&\qquad && \text{multiplicatively dependent)}. 
\end{alignat*}
These examples motivate the following conjecture. 

\begin{conjecture}
\label{comain}
Let ${m\ge 2}$, and let $(\ux,\uy)$ be a MMD $m$-pair. Then, up to permuting the entries and torsion-equivalence, one of the following holds with finitely many exceptions.  

\begin{enumerate}
\item $(\ux,\uy)$ is an $m$-canonical pair.  

\item
\label{icomplic}
There exist 
\begin{itemize}
\item
integers~$r$ and~$s$ satisfying ${r\ge 0}$, ${s\ge 1}$, ${r+s\ge 2}$,  

\item
integers ${m_1, \ldots, m_s\ge2}$ satisfying ${r+m_1+\ldots+m_s=m}$, 

\item
${w_1, \ldots, w_r, t_1, \ldots, t_s \in \Gamma}$, ${z_1, \ldots, z_r, u_1, \ldots, u_s \in \Delta}$ such that the algebraic numbers
$$
w_1+z_1,\ \ldots\, w_r+z_r,\ t_1u_1, \ \ldots, \ t_su_s
$$
are non-zero and multiplicatevely dependent,    
\end{itemize}
and we have 
$$
(\ux,\uy)=(\uw,\uz)\sqcup (\ut^{(1)}, \uu^{(1)})\sqcup\cdots\sqcup (\ut^{(s)}, \uu^{(s)}),
$$ 
where $(\ut^{(k)}, \uu^{(k)})$ is a twisted $m_k$-pair generated by $(t_k, u_k)$. 

\end{enumerate}
\end{conjecture}

Informally, an MMD pair is either canonical, or it has a non-empty part  made of several twisted pairs satisfying a suitable multiplicative dependence condition. 

The case ${m=2}$ of this conjecture is confirmed by Theorem~\ref{thmtwo}, and the case ${m=3}$  is confirmed by Theorem~\ref{thmthree}, assuming $abc$.

\begin{remark}
Generalizing Example~\ref{exsecond}, it is not hard to show that, given~$r$,~$s$ and ${m_1, \ldots, m_s}$ as above, there exist infinitely many pairs of almost disjoint groups~$\Gamma$ and~$\Delta$ of bounded rank, for which there exist an MMD pair as described in item~\ref{icomplic} of the conjecture.  Moreover, if ${\ell_1, \ldots, \ell_s}$ are non-zero integers, then, replacing every $(t_k, u_k)$ by $(t_k^{\ell_k}, u_k^{\ell_k})$, we obtain infinitely many MD-pairs for the same~$\Gamma$ and~$\Delta$. (We do not claim that these pairs are MMD for every choice of ${\ell_1, \ldots, \ell_s}$, but they are for infinitely many choices.)

These observations imply that one cannot expect a
substantially stronger statement than Conjecture~\ref{comain}. 
\end{remark}

\subsection*{Plan of the paper}

In Section~\ref{sheights} we collect necessary facts about heights, recall some results from Baker's theory and state the $abc$-conjecture. In Sections~\ref{spowers} and~\ref{smulti} we obtain auxiliary results used in the proof of Theorem~\ref{thmtwo}. In particular, in Section~\ref{spowers} we show that the sumset of two almost disjoint finitely generated multiplicative groups may contain at most finitely many perfect powers (in a given number field). Theorem~\ref{thmtwo} is proved in Section~\ref{sproofmtwo}. 

Section~\ref{smultdepen} is the technical heart of the article. In it we state and proof Theorem~\ref{thmodgthree}, which classifies (conditionally to $abc$) the triples $(x,y,z)$ of elements of a multiplicative group~$\Lambda$ such that ${x-1,\ y-1,\ z-1}$ are multiplicatively dependent modulo~$\Lambda$. Theorem~\ref{thmthree} is deduced from Theorem~\ref{thmodgthree} in Section~\ref{sproofmthree}. 

\subsubsection*{Acknowledgment} We thank David Masser for a useful discussion. 

\subsubsection*{Disclaimer} No AI tool was used in preparing this article.

\section{Notation and conventions}
\label{sconven}
Letters~$K$ and~$L$ usually denote number fields, letters $\Gamma$,~$\Delta$ and~$\Lambda$ denote finitely generated multiplicative groups of algebraic numbers. Letters $x,y,z,t,u,w$ usually denote algebraic numbers; they are not used in this article as independent variables. 

Letters~$\xi$ and~$\zeta$ are reserved in this article for roots of unity, but they are used differently. If~$\xi$ is supplied with an index, it is just for counting purposes, like $\xi_1,\xi_2,\xi_3$ are just three roots of unity. On the contrary,~$\zeta_m$ denotes a \textit{primitive} root of unity or order~$m$.

Let~$K$ be a number field. 
We denote by~$M_K$ the set of non-trivial absolute values on~$K$,  normalized  to extend standard $p$-adic or infinite absolute values on~$\Q$. That is: if $v\mid \infty$ then ${|x|_v=x}$ for any positive ${x\in \Q}$; and if ${v\mid p}$, where~$p$ is a finite rational prime, then ${|p|_v=p^{-1}}$.  
With this normalization, the product formula on~$K$ looks as
\begin{equation*}
\sum_{v\in M_K} d_v\log|x|_v=0 \qquad (x \in K^\times), 
\end{equation*}
where  ${d_v:=[K_v:\Q_v]}$ are the local degrees.
In the sequel, we call the elements of~$M_K$ \textit{absolute values} or \textit{primes}, depending on what better fits the context. 

When we speak on a finite subset~$S$ of~$M_K$, we always tacitly assume that~$S$ contains all the infinite primes. We denote by $\OO_S$ the ring of~$S$-integers in~$K$, and by $\OO_S^\times$ the group of $S$-units:
$$
\OO_S:=\{x\in K: |x|_v \le 1 \ \text{for}\ v\notin S\}, \qquad \OO_S^\times:=\{x\in K: |x|_v = 1 \ \text{for}\ v\notin S\}. 
$$ 

The letter~$\OO$ is used also in a different context to denote~$\OO_v$, the local ring of the finite prime ${v\in S}$:
$$
\OO_v:= \{x\in K: |x|_v\le 1\}. 
$$ 
This definition is not very coherent with the previously defined $\OO_S$; in fact, 
$$
\OO_S=\bigcap_{v\in M_K\smallsetminus S}\OO_v, 
$$
and, to be consistent, what we denote by~$\OO_S$ must be $\OO_{M_K\smallsetminus S}$. However, both notations~$\OO_S$ and~$\OO_v$ are traditional, and using them will not cause any confusion.

Let~$G$ be a multiplicatively written abelian group and~$m$ a positive integer. In this article  $G^m$ may denote two distinct groups: $\underbrace{G\times\cdots\times G}_m$ and ${\{x^m: x\in G\}}$. It is usually clear from the context in which meaning $G^m$ is used; in the few cases when it may be not clear, it is indicated.

We use the $O(\cdot)$ notation, and we systematically use the Vinogradov notation~$\ll$,~$\gg$ as a substitute for $O(\cdot)$; that is, ${A\ll B}$ means that ${A=O(B)}$ and ${A\gg B}$ means that ${B=O(A)}$. Usually we specify what the implied constants depend of, and whether or not they are effective,  unless it is clear from the context.

\section {Heights, Baker's inequality, $abc$-conjecture}
\label{sheights}

By the \textit{height} of an algebraic number~$x$ we mean the absolute logarithmic height. Recall that it is defined by 
\begin{equation}
\label{edefhe}
\height(x) := d^{-1}\sum_{v \in M_K}d_v\log^+|x|_v , 
\end{equation}
where~$K$ is a number field containing~$x$ and ${d:=[K:\Q]}$. Here and below we are using  the notation 
$$
\log^+:= \max\{\log, 0\}, \qquad \log^\ast := \max\{\log, 1\}.
$$
It is known that the right-hand side of~\eqref{edefhe} depends only on~$x$, not on~$K$. 

We will be using without special reference the standard properties of the height, like 
$$
\height(x+y) \le \height(x)+\height(y) +\log2, \quad \height(xy) \le \height(x)+\height(y), \quad \height(x^n) = |n|\height(x), 
$$
etc. Recall also \textit{Liouville's inequality}: for ${x \in K^\times}$ and ${v\in M_K}$ we have 
\begin{equation}
\label{eliouv}
e^{-(d/d_v)\height(x)}\le |x|_v \le e^{(d/d_v)\height(x)}. 
\end{equation}

We will be also using the following property.

\begin{proposition}
\label{prhpolx}
Let ${F(X) \in \bar\Q[X]}$ be a non-zero polynomial of degree~$m$. Then for any algebraic number~$x$ we have 
\begin{equation*}
\height(F(x)) =m \height(x) + O(1), 
\end{equation*}
where the implied constant depends on~$F$. 
\end{proposition}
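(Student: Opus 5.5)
We argue place by place and then sum. Write $F(X)=a_m\prod_{i=1}^m(X-\alpha_i)$ with $a_m\ne 0$; if $m=0$ the statement is trivial, since $F(x)=a_0$ has constant height. Fix a number field $K$ containing $x$, the coefficients of~$F$ and the roots~$\alpha_i$, and put $d:=[K:\Q]$. By~\eqref{edefhe}, the claim follows if we prove, for each $v\in M_K$,
$$
\bigl|\log^+|F(x)|_v - m\log^+|x|_v\bigr| \le c_v,
$$
where $c_v$ depends only on $F$ and $v$, and $c_v=0$ for all but finitely many~$v$. Then $\sum_v d_vc_v/d$ is a constant depending only on~$F$. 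One point needs care: the field $K$ depends on~$x$. To handle this, we take $K_0$ to be the field generated by the coefficients and roots of~$F$, and choose the constants $c_v$ depending only on the restriction of $v$ to~$K_0$. The weighted sum $\sum_{v\mid w} d_v/d=d_w/[K_0:\Q]$ is independent of~$K$, so the total constant depends only on~$F$.

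\emph{Upper bound.} We have $|F(x)|_v\le C_v\max(1,|x|_v)^m$, where $C_v=\max_j|a_j|_v$ at non-archimedean places and $C_v=(m+1)\max_j|a_j|_v$ at archimedean ones. Hence $\log^+|F(x)|_v\le m\log^+|x|_v+\log^+C_v$, and $C_v\le 1$ for all but finitely many~$v$. This already gives $\height(F(x))\le m\height(x)+O(1)$.

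\emph{Lower bound.} This is the main point, because $F(x)$ can be $v$-adically small when $x$ is close to a root~$\alpha_i$. Set $B_v:=2\max(1,\max_i|\alpha_i|_v)$ (no factor~$2$ is needed at non-archimedean~$v$).

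If $|x|_v\ge B_v$, then $|x-\alpha_i|_v\ge |x|_v/2$ for each~$i$, so
$$
\log|F(x)|_v\ge m\log|x|_v+\log|a_m|_v-m\log 2 .
$$
At non-archimedean places one even has $|x-\alpha_i|_v=|x|_v$ once $|x|_v>\max(1,|\alpha_i|_v)$.

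If $|x|_v<B_v$, then $m\log^+|x|_v\le m\log B_v$, and trivially $\log^+|F(x)|_v\ge 0$.

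In both cases
$$
\log^+|F(x)|_v\ge m\log^+|x|_v-c'_v,
$$
where $c'_v$ is built from $\log B_v$, $|\log|a_m|_v|$ and $m\log 2$. Since $|a_m|_v=1$ and all $|\alpha_i|_v\le1$ for almost all~$v$, we get $c'_v=0$ for almost all non-archimedean~$v$. This handles the cancellation issue, because near the roots we only need the trivial lower bound~$0$ for $\log^+$. Summing over $v$ with weights $d_v/d$ gives $\height(F(x))\ge m\height(x)-O(1)$, which completes the proof.
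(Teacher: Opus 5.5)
Your argument is correct and complete. The paper does not prove this proposition; it quotes it as a standard property of the height. So the only real comparison is with the usual textbook route.

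In that route, $\height(F(x))$ is the height of the image of $(x:1)$ under the map $\phi\colon (X:Y)\mapsto (Y^mF(X/Y):Y^m)$ of $\mathbb{P}^1$. This map is a morphism precisely because $\deg F=m$: at $Y=0$ the first coordinate is $a_mX^m\ne 0$. Functoriality of Weil heights then gives $\height(\phi(P))=m\height(P)+O(1)$, with the lower bound coming from the Nullstellensatz applied to the two coordinate forms.

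Your place-by-place argument replaces the Nullstellensatz with the factorization $F=a_m\prod(X-\alpha_i)$ over a splitting field. This is more elementary, exploits the one-variable setting, and gives an explicit constant. The cost is the bookkeeping needed to make the constant independent of the field containing $x$. You handle that correctly by letting $c_v$ depend only on $v|_{K_0}$ and using $\sum_{v\mid w}d_v/d=d_w/[K_0:\Q]$.

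One cosmetic point concerns non-archimedean $v$ where you drop the factor $2$. There the case split should be $|x|_v>B_v$ versus $|x|_v\le B_v$. If $|x|_v=B_v=|\alpha_i|_v$, the factor $|x-\alpha_i|_v$ can be arbitrarily small, so that boundary value cannot go in the first case. Your trivial bound $\log^+|F(x)|_v\ge 0\ge m\log^+|x|_v-m\log B_v$ covers it, so the conclusion, including $c'_v=0$ for almost all $v$, is unaffected.
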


More generally, for the vector ${(x_1, \ldots x_m) \in \bar\Q^m}$ we define 
\begin{equation}
\label{edefhem}
\height(x_1, \ldots x_m) := d^{-1}\sum_{v \in M_K}d_v\log^+\max\{|x_1|_v, \ldots, |x_m|_v\}, 
\end{equation}
where~$K$ is a number field containing ${x_1, \ldots, x_m}$ and ${d:=[K:\Q]}$. This is sometimes called \textit{affine height} (to distinguish it from the \textit{projective height}), but we call it simply \textit{height} of ${(x_1, \ldots x_m)}$, because the projective height is not used in this article.

\subsection{Heights on multiplicative groups}

We denote by $\|\cdot\|$ the sup-norm on $\Z^m$. Let  ${\ugamma:=(\gamma_1, \ldots , \gamma_m)\in (\bar\Q^\times)^m}$ and ${\ub =(b_1, \ldots , b_m)\in \Z^m}$. Then the height of ${\ugamma^\ub:= \gamma_1^{b_1}\cdots \gamma_m^{b_m}}$ trivially satisfies 
\begin{equation}
\label{ehmutriv}
\height(\ugamma^\ub) \ll \|\ub\|,
\end{equation}
where the implied constant depends on $\ugamma$. The following observation goes back to Dirichlet.

\begin{proposition}
\label{prdir}
Assume that $\gamma_1, \ldots, \gamma_m$ are multiplicatively independent. Then for any ${\ub \in \Z^m}$ we have  
${\height(\ugamma^\ub) \gg \|\ub\|}$,
where the implied constant depends on $\ugamma$.
\end{proposition}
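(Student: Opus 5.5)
The plan is to reduce the statement to a lower bound on a logarithmic embedding. Fix a number field $K$ containing $\gamma_1,\ldots,\gamma_m$, and let $S\subset M_K$ be a finite set of primes containing the infinite ones together with every $v$ for which some $|\gamma_k|_v\ne1$. Then all $\gamma_k$ are $S$-units. Consider the homomorphism
$$
\lambda: \OO_S^\times\to \mathbb{R}^{S}, \qquad x\mapsto \bigl(d_v\log|x|_v\bigr)_{v\in S}.
$$
For an $S$-unit $x$ only the primes in $S$ contribute to~\eqref{edefhe}. By the product formula the negative parts balance the positive parts, so
$$
\height(x)=\frac{1}{2d}\sum_{v\in S}d_v\bigl|\log|x|_v\bigr| .
$$
Thus $\height(x)$ equals a norm of $\lambda(x)$ up to a constant depending only on $K$ and~$S$.

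Set $\ell_k:=\lambda(\gamma_k)$, so that $\lambda(\ugamma^\ub)=\sum_k b_k\ell_k$. The key step is to show that $\ell_1,\ldots,\ell_m$ are linearly independent over~$\mathbb{R}$.
\begin{itemize}
\item First, $\ker\lambda$ consists of the elements with $|x|_v=1$ for all $v$. By Kronecker's theorem these are exactly the roots of unity, a finite group.
\item Next, $\lambda(\OO_S^\times)$ is a lattice in the hyperplane $\sum x_v=0$. This is Dirichlet's $S$-unit theorem, in its standard form including discreteness.
\item Now $\gamma_1,\ldots,\gamma_m$ are multiplicatively independent and the kernel is torsion. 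Hence $\ell_1,\ldots,\ell_m$ are $\Z$-linearly independent.
\item Because they lie in a discrete subgroup of $\mathbb{R}^S$, $\Z$-independence upgrades to $\mathbb{R}$-independence.
\end{itemize}

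With independence in hand, the linear map $\mathbb{R}^m\to\mathbb{R}^S$ given by $\ub\mapsto\sum b_k\ell_k$ is injective. By compactness of the unit sphere, $\|\sum b_k\ell_k\|\ge c\|\ub\|$ for some $c>0$ depending only on $\ugamma$. Combined with the height formula above, this gives $\height(\ugamma^\ub)\gg\|\ub\|$.

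The only delicate point is the passage from $\Z$- to $\mathbb{R}$-independence. I would handle it by citing the lattice property of $\lambda(\OO_S^\times)$, which is a standard part of Dirichlet's theorem. Alternatively, I would argue directly: Northcott's theorem gives only finitely many elements of bounded height and degree in $K$, so $\lambda$ of the finitely generated group $\langle\gamma_1,\ldots,\gamma_m\rangle$ is discrete. A discrete subgroup of $\mathbb{R}^S$ of rank $m$ spans a space of dimension~$m$.
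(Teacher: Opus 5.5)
Your proof is correct and complete. The identity $\height(x)=\frac{1}{2d}\sum_{v\in S}d_v\bigl|\log|x|_v\bigr|$ for $S$-units, the upgrade from $\Z$- to $\mathbb{R}$-independence of the vectors $\lambda(\gamma_k)$ via discreteness (Dirichlet's $S$-unit theorem or Northcott), and the compactness step are all sound. The paper gives no proof of its own, only citing \cite[page~77]{Bi02} and \cite[Theorem~13.4]{BBM14}, and your argument appears to be the standard one used for this statement.
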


The proof can be found, for instance,  in \cite[page~77]{Bi02} (immediately after the statement of Theorem~$1'$) or in  \cite[Theorem 13.4]{BBM14}. 

The following consequence is crucial. We know that ${\height(xy)\le \height(x)+\height(y)}$ for any algebraic numbers~$x$ and~$y$. It turns out that, when~$x$ and~$y$ are selected in almost disjoint finitely generated multiplicative groups, an opposite inequality holds. 

\begin{corollary}
\label{coadis}
Let~$\Gamma$ and~$\Delta$ be  almost disjoint finitely generated  subgroups of~$\bar\Q^\times$.  Then for any ${x \in \Gamma}$ and ${y\in \Delta}$ we have ${\height(xy) \gg \height(x)+\height(y)}$, where the implied constant depends on~$\Gamma$ and~$\Delta$. 
\end{corollary}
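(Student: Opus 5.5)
The plan is to reduce Corollary~\ref{coadis} to Proposition~\ref{prdir}, applied to a suitable basis of the group $\Gamma\Delta$ modulo torsion. First I get rid of torsion. Heights are insensitive to roots of unity, so I may replace $x$ and $y$ by $x\xi$ and $y\xi'$ for roots of unity $\xi,\xi'$. Fix a free part for each group: $\Gamma=\Gamma_{\rm tors}\times\langle\gamma_1,\ldots,\gamma_r\rangle$ and $\Delta=\Delta_{\rm tors}\times\langle\delta_1,\ldots,\delta_s\rangle$, with $\gamma_1,\ldots,\gamma_r$ multiplicatively independent and likewise $\delta_1,\ldots,\delta_s$. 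Then I can write $x=\xi\ugamma^{\ub}$ and $y=\xi'\udelta^{\uc}$ with $\ub\in\Z^r$ and $\uc\in\Z^s$.

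The key step is to show that almost disjointness makes the combined family $\gamma_1,\ldots,\gamma_r,\delta_1,\ldots,\delta_s$ multiplicatively independent. Suppose $\ugamma^{\ub}\udelta^{\uc}=1$ with $(\ub,\uc)\ne 0$. Then $\ugamma^{\ub}=\udelta^{-\uc}$ lies in $\Gamma\cap\Delta$, which is a finite group. So some power, say the $N$-th, is $1$, giving $\ugamma^{N\ub}=1$ and $\udelta^{N\uc}=1$. Independence within each family then forces $\ub=0$ and $\uc=0$, a contradiction.

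Now apply Proposition~\ref{prdir} to the independent family $(\ugamma,\udelta)$:
$$
\height(xy)=\height(\ugamma^{\ub}\udelta^{\uc})\gg\|(\ub,\uc)\|\ge \tfrac12(\|\ub\|+\|\uc\|).
$$
Conversely, the trivial bound~\eqref{ehmutriv} gives $\height(x)=\height(\ugamma^{\ub})\ll\|\ub\|$ and $\height(y)\ll\|\uc\|$. Combining the two yields $\height(xy)\gg\height(x)+\height(y)$. All implied constants depend only on the chosen bases, and hence only on $\Gamma$ and $\Delta$.

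The only step that needs care is the independence argument: one has to make sure torsion is handled correctly. That is why the torsion parts are split off first and the finiteness of $\Gamma\cap\Delta$ is used only to pass to an $N$-th power. The degenerate cases $r=0$ or $s=0$ are covered by the same argument.
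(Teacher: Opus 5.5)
Your proposal is correct and follows essentially the same route as the paper: split off torsion, use almost disjointness to see that $\gamma_1,\ldots,\gamma_r,\delta_1,\ldots,\delta_s$ are jointly multiplicatively independent, then combine Proposition~\ref{prdir} with the trivial bound~\eqref{ehmutriv}. The only difference is that you spell out the independence step (raising to an $N$-th power inside the finite group $\Gamma\cap\Delta$), which the paper simply asserts, and you bound by $\|\ub\|+\|\uc\|$ where the paper uses the maximum.
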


\begin{proof}

Let~$\Gamma'$ and~$\Delta'$ be  maximal torsion-free subgroups of~$\Gamma$ and~$\Delta$, respectively. Let  ${\gamma_1, \ldots \gamma_r}$ and ${\delta_1, \ldots, \delta_s}$ be free generators of~$\Gamma'$ and~$\Delta'$. Since~$\Gamma$ and~$\Delta$ are  almost disjoint, the ${r+s}$ algebraic numbers ${\gamma_1, \ldots \gamma_r, \delta_1, \ldots, \delta_s}$ are multiplicatively independent. Multiplying~$x$ and~$y$ by some roots of unity, we may assume that ${x\in \Gamma'}$ and ${y\in \Delta'}$. Write ${x=\ugamma^\ub}$  and ${y=\udelta^\uc}$ with ${\ub\in \Z^r}$ and ${\uc\in \Z^s}$. Then 
$$
\height(xy) \gg \max\{\|\ub\|, \|\uc\|\} \gg \max \{\height(x), \height(y)\} \gg \height(x)+\height(y), 
$$
where the first inequality is by Proposition~\ref{prdir} and the second is by~\eqref{ehmutriv}. 
\end{proof}

\subsection{Baker's inequality}
\label{ssbaker}
In this subsection~$K$ is a number field of degree~$d$. 
The following fundamental result is known as \textit{Baker's inequality}.  

\begin{theorem}
\label{thbaker}
Let ${\ugamma=(\gamma_1, \ldots, \gamma_m)\in (K^\times)^m}$  and ${\ub \in \Z^m}$.  Assume that $ {\ugamma^\ub \ne1}$. Then for every ${v\in M_K}$ we have 
$$
|\ugamma^\ub-1|_v \ge e^{-C\height'(\gamma_1)\cdots \height'(\gamma_m) \log^\ast\|\ub\|},
$$
where ${\height'(x):=\min\{\height(x), 1/d\}}$ and ${C>0}$ depends effectively on~$d$, ~$m$ and~$v$. 
\end{theorem}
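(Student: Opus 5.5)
The plan is to treat Theorem~\ref{thbaker} as a repackaging of the standard explicit lower bounds for linear forms in logarithms. For archimedean~$v$ I would use Matveev's theorem, and for non-archimedean~$v$ Yu's $p$-adic theorem. I would not reprove anything from transcendence theory; the work is in matching normalizations to the statement exactly as worded, with $\height'(x)=\min\{\height(x),1/d\}$.

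\textbf{Archimedean case.} Fix the embedding of~$K$ corresponding to~$v$. Write $\Lambda=b_1\log\gamma_1+\cdots+b_m\log\gamma_m+2\pi i b_0$ with principal logarithms, choosing $|b_0|\le m\|\ub\|+1$ so that $|\Lambda|\le\pi$. If $|\ugamma^\ub-1|_v\ge 1/2$ there is nothing to prove. Otherwise $|\ugamma^\ub-1|_v\gg|\Lambda|$, and $\Lambda\ne0$ because $\ugamma^\ub\ne1$. Matveev then gives
$$\log|\Lambda|\ge -c(d,m)\,A_1\cdots A_m\log^\ast\|\ub\|,\qquad A_j=\max\{d\height(\gamma_j),|\log\gamma_j|,0.16\}.$$

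\textbf{Non-archimedean case.} For $v\mid p$ I would apply Yu's theorem to $\mathrm{ord}_v(\ugamma^\ub-1)$. Up to factors depending only on $d,m,p$, it gives the same shape of bound with $\max\{\height(\gamma_j),\cdot\}$ in place of the $A_j$. Converting from order to $|\cdot|_v$ costs only a constant depending on $d$ and~$v$.

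\textbf{Main obstacle: the comparison with $\height'$.} This step must absorb the factors $A_j$ into $\height'(\gamma_j)$. With the minimum as stated, $\height'(\gamma_j)\le 1/d$, so the right-hand side of the theorem is at least $e^{-Cd^{-m}\log^\ast\|\ub\|}$. The claim would then be that $|\ugamma^\ub-1|_v$ is bounded below by a power of $\|\ub\|^{-1}$ uniformly in the heights of the~$\gamma_j$.

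Before pushing the argument, I would test this reduction on $m=1$, $b_1=1$, $\gamma_1=1+1/N$ with $N\to\infty$ and $v$ archimedean.
\begin{itemize}
\item The left-hand side is $1/N\to0$.
\item The right-hand side is $e^{-C/d}$, a constant, since $\log^\ast 1=1$.
\end{itemize}
So no constant $C$ depending only on $d$, $m$, $v$ can work. The absorption step is therefore exactly where the statement, read with $\min$, cannot be proved: Matveev's and Yu's factors $A_j\ge d\height(\gamma_j)$ genuinely grow with the height and cannot be bounded by $1/d$.

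My conclusion is that the argument above delivers the inequality precisely when $\height'(\gamma_j)$ dominates both $\height(\gamma_j)$ and $1/d$ (together with the $|\log\gamma_j|/d$ term in the archimedean case). As literally worded with $\min$, the inequality fails by the example just given. In the write-up I would state this counterexample explicitly rather than silently change the definition of $\height'$.
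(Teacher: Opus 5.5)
Your approach, Matveev's Corollary~2.3 at archimedean $v$ and Yu's $p$-adic bound at finite $v$, is exactly the paper's, which simply cites these results. Your counterexample $\gamma_1=1+1/N$ is also valid, so the statement is false as printed. The $\min$ is a typo for $\height'(x)=\max\{\height(x),1/d\}$, as the later remark in the proof of Proposition~\ref{prschtijd} confirms (it replaces $\height'(w)$ by $\height(w)$ ``due to the term $d^{-1}$''). With $\max$, your absorption step needs no extra condition. Liouville's inequality gives $|\log\gamma_j|\le|\log|\gamma_j|_v|+\pi\le d\height(\gamma_j)+\pi\le(1+\pi)\,d\,\height'(\gamma_j)$. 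Likewise, the height factor in Yu's bound is within a factor depending only on $d$ and $p$ of $\height'(\gamma_j)$.
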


In the case of infinite~$v$ this can be deduced from  Corollary~2.3 of Matveev~\cite{Ma00}, or from the earlier results of Baker-Wüstholz~\cite{BW93} and Waldschmidt~\cite{Wa93}. The case of finite~$v$ follows from a result of Yu, see the displayed equation on  top of page~30 of~\cite{Yu98}. (Yu improved on his result in subsequent articles, but the improvements mainly concern the shape of the expression for~$C$.)   

Combining Theorem~\ref{thbaker} and Proposition~\ref{prdir}, we obtain the following consequence. 

\begin{corollary}
\label{cobaker}
Let~$\Gamma$ be a finitely generated subgroup of~$K^\times$ and ${v \in M_K}$.  Then there exist ${C>0}$, effectively depending of~$\Gamma$ and~$v$, such that for every ${x\in \Gamma}$ we have either ${x=1}$ or ${|x-1|_v \ge e^{-C\log^\ast\height(x)}}$. 
\end{corollary}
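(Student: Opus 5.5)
Let me fix free generators and reduce to Theorem~\ref{thbaker}. Let~$\Gamma'$ be a maximal torsion-free subgroup of~$\Gamma$, with free generators ${\gamma_1,\ldots,\gamma_r}$, and let~$T$ be the (finite) torsion subgroup of~$\Gamma$. Every ${x\in\Gamma}$ can be written as ${x=\xi\ugamma^\ub}$ with ${\xi\in T}$ and ${\ub\in\Z^r}$. To absorb the torsion part, I enlarge the tuple: append a generator~$\xi_0$ of the cyclic group~$T$ and set ${\ugamma^+:=(\gamma_1,\ldots,\gamma_r,\xi_0)}$. Then ${x=(\ugamma^+)^{\ub^+}}$ with ${\ub^+=(\ub,b_0)}$, where ${0\le b_0<|T|}$. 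In particular ${\|\ub^+\|\le \max\{\|\ub\|,|T|\}}$.

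Assume ${x\ne1}$. Applying Theorem~\ref{thbaker} to the tuple~$\ugamma^+$, which is fixed once~$\Gamma$ is fixed, gives
$$
|x-1|_v\ge e^{-C_1\log^\ast\|\ub^+\|}.
$$
Here~$C_1$ depends effectively on~$d$, $r+1$, $v$ and the heights of the fixed generators, hence only on~$\Gamma$ and~$v$. (If some ${\height'(\gamma_i)}$ vanishes, as for the root of unity~$\xi_0$, I instead use ${\max\{\height(\gamma),1/d\}}$ in the statement, or simply treat those factors as bounded constants.) It remains to bound~$\|\ub^+\|$ in terms of~$\height(x)$.

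Since~$\xi$ is a root of unity, ${\height(x)=\height(\ugamma^\ub)}$, and Proposition~\ref{prdir} gives ${\|\ub\|\le C_2\height(x)}$ with~$C_2$ depending only on~$\Gamma$. This dependence should be effective, since the proof of Dirichlet's lower bound goes through a regulator-type nondegeneracy with computable constants. Hence ${\log^\ast\|\ub^+\|\le \log^\ast(C_2\height(x)+|T|)\le C_3\log^\ast\height(x)}$. The elementary inequality ${\log^\ast(aX+b)\ll_{a,b}\log^\ast X}$ holds for ${X\ge0}$, including when ${\height(x)}$ is small, since then the right side is bounded below by~$1$. Taking ${C:=C_1C_3}$ gives the claim.

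The only delicate point is the effectivity of the constant in Proposition~\ref{prdir}. The cited proofs give it explicitly in terms of the generators, and that is what I would invoke. The rest is bookkeeping with the torsion part and with small heights, both of which the~$\log^\ast$ normalization handles.
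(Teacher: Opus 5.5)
Your proof is correct and is the paper's argument: the paper proves the corollary just by combining Theorem~\ref{thbaker} with Proposition~\ref{prdir}, and your bookkeeping (appending a torsion generator, using $\|\ub\|\ll\height(x)$, and the elementary bound on $\log^\ast$) fills in details the paper leaves implicit. You are also right that $\height'$ in Theorem~\ref{thbaker} must be read as $\max\{\height(x),1/d\}$ rather than $\min$, as the later remark in the proof of Proposition~\ref{prschtijd} about replacing $\height'(w)$ by $\height(w)$ confirms.
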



\subsection{Local heights and binary unit equations}

\label{sslocalh}

We continue to assume that~$K$ is a number field of degree~$d$. Given ${x\in K^\times}$, we have 
$$
\height(x) = \height(x^{-1}) = d^{-1} \sum_{v\in M_K} d_v \log^+|x^{-1}|_v = \sum_{v\in M_K}\height_v(x), 
$$
where the summands
$$
\height_v(x) := \frac{d_v}{d}\log^+|x^{-1}|_v 
$$
are called \textit{local heights}. In this notation, Corollary~\ref{cobaker} can be restated as follows: 
\begin{equation}
\label{ecobaker}
\text{for ${x\in \Gamma}$ we have either ${x=1}$ or ${\height_v(x-1) \ll \log^\ast \height(x)}$}, 
\end{equation}
where the implied constant effectively depends on~$\Gamma$ and~$v$.

As an application of Corollary~\ref{cobaker}, we deduce Theorem~\ref{thmone}. For further use we will prove a slightly more general statement. 

\begin{theorem}
\label{thunits}
Let~$\Lambda$  be a finitely generated subgroup of~$\bar\Q^\ast$ and ${\beta \in \bar\Q}$. Then equation ${x+y=\beta}$ has at most finitely many solutions   in ${x,y\in \Lambda}$  and their heights are effectively bounded in terms of~$\Lambda$  and~$\beta$. 
\end{theorem}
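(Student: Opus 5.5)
We follow the standard route for binary unit equations: we use the local heights together with the Baker-type bound~\eqref{ecobaker}. First we reduce to a convenient setting. Fix a number field~$K$ containing~$\Lambda$ and~$\beta$, and a finite set $S\subset M_K$ (containing the infinite primes) such that $\Lambda\subseteq\OO_S^\times$ and $|\beta|_v=1$ for $v\notin S$ when $\beta\ne0$. If $\beta=0$, then $x+y=0$ forces $-1=y/x\in\Lambda\Lambda^{-1}$, so there are infinitely many solutions $(x,-x)$. Hence the statement must be read with $\beta\ne0$ (this is the only case needed for Theorem~\ref{thmone}, where $\beta$ runs over the finitely many roots of unity obtainable, and more generally one replaces $\Lambda$ by $\langle\Lambda,\beta\rangle$). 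Enlarging~$\Lambda$ to $\langle \Lambda,\beta\rangle$, we may divide by~$\beta$ and reduce to the equation $x+y=1$ with $x,y\in\Lambda$. It then suffices to bound $\height(x)$ effectively, since $\height(y)=\height(1-x)\le\height(x)+\log 2$, and Northcott's theorem in the finitely generated group~$\Lambda\subset K^\times$ gives finiteness.

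The key step is to bound the height of~$x$ using the local heights of~$y=1-x$. Since $y\in\Lambda\subseteq\OO_S^\times$, we have $|y|_v=1$ for $v\notin S$, so $\height_v(y)=0$ outside~$S$. Therefore
$$
\height(y)=\sum_{v\in S}\height_v(y)=\sum_{v\in S}\height_v(1-x).
$$
For each of the finitely many $v\in S$ we apply~\eqref{ecobaker} to $x\in\Lambda$ (note $x\ne1$ because $y\ne0$). This gives $\height_v(x-1)\ll\log^\ast\height(x)$, with constants effective in~$\Lambda$ and~$v$. Summing over $v\in S$ yields $\height(y)\ll\log^\ast\height(x)$.

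Finally, $x=1-y$ gives $\height(x)\le\height(y)+\log2\ll\log^\ast\height(x)$. This inequality bounds $\height(x)$ by an effective constant depending only on~$\Lambda$ (and~$\beta$ through the reduction). The only point requiring care is the bookkeeping in the reduction: the choice of~$S$ and the passage from~$\Lambda$ to $\langle\Lambda,\beta\rangle$ must keep all constants effective. Beyond that, the argument is direct once~\eqref{ecobaker} is available.
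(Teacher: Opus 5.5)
Your argument is correct and is essentially the paper's proof: adjoin $\beta$ to $\Lambda$, reduce to $x+y=1$, and sum \eqref{ecobaker} over $v\in S$ to get $\height(y)\ll\log^\ast\height(x)$. The only differences are cosmetic: you close the argument with the trivial bound $\height(x)\le\height(y)+\log 2$ where the paper uses the symmetric estimate $\height(x)\ll\log^\ast\height(y)$, and your caveat that $\beta\ne0$ is needed is right (the paper tacitly assumes it when dividing by $\beta$), though for $\beta=0$ there are infinitely many solutions only when $-1\in\Lambda$ and $\Lambda$ is infinite.
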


Since the field~$K$,  generated by~$\Gamma$ and~$\Delta$, contains at most finitely many roots of unity,  Theorem~\ref{thmone} follows from Theorem~\ref{thunits} used with $\Lambda$ generated by~$\Gamma$, $\Delta$ and the roots of unity from~$K$.

Of course, Theorem~\ref{thunits} is well-known, but it will be used in this article on many occasions, so we prefer to include a quick proof.  

\begin{proof}[Proof of Theorem~\ref{thunits}]
Replacing~$\Lambda$ by the group generated by~$\Lambda$ and~$\beta$, we may assume that  ${\beta \in \Lambda}$.  
Replacing~$x$ and~$y$ by $x/\beta$ and $y/\beta$, the problem reduces to bounding the heights of ${x,y\in \Lambda}$ satisfying ${x+y=1}$.

Let~$K$ be a number field containing~$\Lambda$. Then ${\Lambda \le \OO_S^\times}$ for some finite set ${S\subset M_K}$.  By~\eqref{ecobaker}, for ${x,y\in \OO_S^\times}$ satisfying ${x+y=1}$, we have
$$
\height(y) = \sum_{v\in S} \height_v(y)= \sum_{v\in S}\height_v(x-1) \ll \log^\ast\height(x),
$$
where the implied constant effectively depends on~$S$. Similarly, ${\height(x) \ll \log^\ast\height(y)}$. Hence both $\height(x)$ and $\height(y)$ are effectively bounded. 
\end{proof}

Here is a useful consequence, to be used  in Section~\ref{smultdepen}. 

\begin{corollary}
\label{corat}
Let~$\Lambda$ be as in Theorem~\ref{thunits} and 
 ${F(T)\in \bar\Q(T)}$  a non-zero rational function not of the form ${AT^n}$ for ${A\in \bar\Q^\times}$ and ${n\in \Z}$; that is, either the denominator or the numerator of~$F$ has a  root in~$\bar\Q^\times$. Then there exist at most finitely many ${x\in \Lambda}$ such that ${F(x)\in \Lambda}$, and the heights of these~$x$ are effectively bounded in terms of~$\Lambda$ and~$F$. 
\end{corollary}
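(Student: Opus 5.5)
Since $F$ is not of the form $AT^n$, some root $\alpha\in\bar\Q^\times$ of the numerator or the denominator of $F$ exists, and the plan is to reduce the statement to Theorem~\ref{thunits} by factoring $F$ over $\bar\Q$. Write
$$
F(T)=A\,T^{n}\prod_{i=1}^{k}(T-\alpha_i)^{e_i},
$$
with $A\in\bar\Q^\times$, $n\in\Z$, distinct $\alpha_i\in\bar\Q^\times$ and non-zero integers $e_i$; the hypothesis says exactly that $k\ge1$. First enlarge $\Lambda$ to the group $\Lambda'$ generated by $\Lambda$, $A$ and $\alpha_1,\dots,\alpha_k$; this is still finitely generated. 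Let $K$ be a number field containing $\Lambda'$, and choose a finite $S\subset M_K$ with $\Lambda'\le\OO_S^\times$ and such that all the differences $\alpha_i-\alpha_j$ ($i\ne j$) are $S$-units.

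The key step is to show that if $x\in\Lambda$ and $F(x)\in\Lambda$, then each factor $x-\alpha_i$ is an $S$-unit. Take $v\notin S$. Then $|x|_v=|\alpha_i|_v=1$, so $|x-\alpha_i|_v\le1$ for every $i$. If $|x-\alpha_i|_v<1$ for some $i$, then for $j\ne i$ we get $|x-\alpha_j|_v=|(x-\alpha_i)+(\alpha_i-\alpha_j)|_v=1$, because $\alpha_i-\alpha_j$ is a $v$-unit. Hence
$$
|F(x)|_v=|x-\alpha_i|_v^{e_i}\ne1,
$$
which contradicts $F(x)\in\OO_S^\times$. So $|x-\alpha_1|_v=1$ for all $v\notin S$, and therefore $x-\alpha_1\in\OO_S^\times$.

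Now $x$ and $y:=\alpha_1-x$ both lie in $\OO_S^\times$, which is finitely generated, and they satisfy $x+y=\alpha_1$. Theorem~\ref{thunits} applied to this group with $\beta=\alpha_1$ shows that $x$ lies in a finite set, and that its height is effectively bounded in terms of $S$ and $\alpha_1$. Both $S$ and $\alpha_1$ are determined effectively by $\Lambda$ and $F$, which gives the claim.

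The only delicate point is the choice of $S$ so that the valuation argument works. It must include the primes that divide the $\alpha_i-\alpha_j$ and the primes where any $\alpha_i$ is not a unit. With that choice the isolation of the single factor $x-\alpha_i$ at each $v\notin S$ is immediate, and the rest is Theorem~\ref{thunits}.
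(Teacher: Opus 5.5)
Your proof is correct and follows essentially the paper's route: show that $x-\alpha$ is an $S$-unit for a root $\alpha\in\bar\Q^\times$ of the numerator or denominator of $F$, then apply Theorem~\ref{thunits} to $x+(\alpha-x)=\alpha$. The only difference is in how the $S$-unit property is obtained. The paper replaces $F$ by $F^{-1}$ if needed and argues by divisibility in $\OO_S$, using $x-\alpha\mid f(x)\mid g(x)$ and $x-\alpha\mid g(x)-g(\alpha)$ with $g(\alpha)\in\OO_S^\times$. You instead argue one valuation at a time, using that the pairwise differences of the roots are $S$-units, which treats zeros and poles uniformly.
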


\begin{proof}
Write ${F(T)=f(T)/g(T)}$, where ${f(T), g(T)\in \bar\Q[T]}$ are coprime polynomials. By the hypothesis, one of~$f$,~$g$ must have a  root in $\bar\Q^\times$; replacing~$F$ by $F^{-1}$, if needed, we may assume that $f$ has a  root ${\alpha\in \bar\Q^\times}$. Then ${g(\alpha) \ne 0}$.  Expanding~$\Lambda$, we may assume that it contains both~$\alpha$ and $g(\alpha)$. 

Let~$K$ be a number field and~$S$ be a finite subset of $M_K$ such that 
$$
\Lambda \le \OO_S^\times, \qquad f(T), g(T) \in \OO_S[T]. 
$$
Since~$\alpha$ and  $g(\alpha)$ belong to~$\Lambda$, they both  are $S$-integers.  Hence there exist polynomials ${a(T), b(T) \in \OO_S[T]}$ such that 
$$
f(T)=(T-\alpha)a(T), \qquad g(T)-g(\alpha) = (T-\alpha) b(T). 
$$
It follows that for any ${x\in \OO_S}$, the number ${x-\alpha}$ divides (in the ring $\OO_S$) both  $f(x)$ and ${g(x)-g(\alpha)}$. 
Now let ${x \in \OO_S^\times}$ be such that ${F(x)\in \OO_S^\times}$. Then ${x-\alpha \mid f(x)}$ and ${f(x)\mid g(x)}$ in the ring $\OO_S$. Since   ${x-\alpha \mid g(x)-g(\alpha)}$, we obtain ${x-\alpha \mid g(\alpha)}$. Since ${g(\alpha) \in \Lambda \le \OO_S^\times }$, we obtain ${x-\alpha \in \OO_S^\times}$, which bounds the height of~$x$ by Theorem~\ref{thunits}. 
\end{proof}

\subsection{The $abc$-conjecture}
\label{ssabc}

Let~$K$ be a number field. For every finite  ${v \in M_K}$ we denote by $\NN v$ the \textit{absolute norm} of the corresponding prime of~$K$; for an infinite~$v$   we set ${\NN v :=1}$. 

Recall our convention (see  Section~\ref{sconven}): if ${v\in M_K}$ is finite and~$p$ is the underlying rational prime then ${|p|_v=p^{-1}}$. It follows that for ${x\in K}$ with ${|x|_v \ne 1}$ we have ${|x|_v \le p^{-1/e_v}}$ or ${|x|_v \ge p^{1/e_v}}$, where $e_v$ is the ramification index of~$v$ over~$\Q$. In terms of the local heights, as defined in Subsection~\ref{sslocalh}, this implies that
\begin{equation}
\label{eimplies}
\height_v(x) >0\ \Rightarrow \ d\height_v(x) \ge \log \NN v   \qquad (x \in K^\times, \ v \in M_K). 
\end{equation}
Define the conductor of ${x\in K^\times}$ as 
\begin{equation}
\label{econdk}
\cond(x) := d^{-1}\sum_{|x|_v<1} \log \NN v,
\end{equation}
the sum being over ${v\in M_K}$ with the property ${|x|_v <1}$.   Then~\eqref{eimplies} implies the lower bound ${\height(x) \ge \cond(x)}$.

In its simplest form, the \textit{$abc$-conjecture} for the number field~$K$ is as follows.

\begin{conjecture}[$abc$ for a number field]
Let~$K$ be a number field and ${\kappa>1}$. Then for ${x\in K}$, distinct from~$0$ and~$1$, we have 
\begin{equation}
\label{econjv}
\height(x) \le \kappa \bigl(\cond(x)+\cond(x^{-1})+\cond(x-1)\bigr) +O(1), 
\end{equation}
where the bounded term $O(1)$ depends on~$\kappa$ and~$K$. 
\end{conjecture}

In this form the conjecture was originally stated by Vojta \cite[page~84]{Vo87}\footnote{though his notation is quite different from ours}, who extended to number fields the original $abc$-conjecture of Masser and Oesterlé.  To be precise, in Vojta's statement the left-hand side of~\eqref{econjv} is  ${\height(x, x-1)}$ (using our notation). However, since ${\height(x) \le\height(x, x-1) \le \height(x) + \log 2}$, it is equivalent to~\eqref{econjv}. See also \cite{El91,GS00}.


Masser~\cite{Ma02}  suggested a refined version of the $abc$-conjecture with a slightly different definition of the conductor, and this version is more commonly used since then. However, it differs from Vojta's version only in the shape of the bounded term $O(1)$, which is not relevant for our purposes.  Therefore, we stick with the ``old-fashioned'' conductor defined in~\eqref{econdk}, because it is more convenient for us. 

In this article, we will be using the following weak version of the $abc$-conjecture.

\begin{conjecture}[weak $abc$ for a number field]
\label{cowabc}
Let~$K$ be a number field. Then there exists ${\kappa>1}$ such that for ${x\in K}$, distinct from~$0$ and~$1$, we have 
\begin{equation*}
\height(x) \le  \kappa\bigl(\cond(x)+\cond(x^{-1})+\cond(x-1)+1\bigr).  
\end{equation*}
\end{conjecture}

Thus, in the strong version, inequality~\eqref{econjv} must hold for \textit{every} ${\kappa>1}$, while in the weak version it holds for \textit{some}~$\kappa$.

\section{Perfect powers in a sumset}
\label{spowers}

In this section~$K$ is a number field and~$\Gamma$,~$\Delta$ are  finitely generated almost disjoint  subgroups of~$K^\times$.  In this section~$n$ always denotes a positive integer and ${(K^\times)^n:=\{x^n:x\in K^\times\}}$. 
An element of~$K^\times$ is called \textit{perfect power} if it belongs to $(K^\times)^n$ for some  ${n\ge 2}$. 

\begin{theorem}
\label{thpp}
The set of perfect powers in ${\Gamma+\Delta}$ is finite. Moreover, if ${x\in \Gamma}$ and ${y\in \Delta}$ are such that ${x+y}$ is a perfect power, then  the heights of~$x$ and~$y$ are effectively bounded in terms of~$\Gamma$,~$\Delta$ and~$K$. 
\end{theorem}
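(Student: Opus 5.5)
The plan is to first bound the exponent~$n$ by a Baker-type argument, and then, for each of the finitely many remaining~$n$, reduce to an effectively solvable superelliptic equation in $S$-integers.

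\emph{Setup.} Let $x+y=z^n$ with ${n\ge 2}$, ${x\in \Gamma}$, ${y\in\Delta}$, ${z\in K^\times}$. Choose a finite ${S\subset M_K}$ such that ${\Gamma,\Delta\le \OO_S^\times}$. Enlarging~$S$, I would also assume that~$\OO_S$ is a principal ideal domain.
\begin{itemize}
\item Since~$x+y$ is an $S$-integer whose valuation at every ${v\notin S}$ is divisible by~$n$, we can write ${x+y=\eps t^n}$ with ${\eps\in\OO_S^\times}$ and ${t\in\OO_S}$.
\item Fix free generators of $\OO_S^\times$ modulo torsion. Reducing the exponents of~$\eps$ modulo~$n$, we may assume ${\eps=\eps_0}$ has exponents in ${[0,n)}$. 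Hence $\eps_0$ comes from a finite set and ${\height(\eps_0)\ll n}$.
\item Put ${w:=-x/y\in\Gamma\Delta}$. Since $\Gamma$ and $\Delta$ are almost disjoint, Corollary~\ref{coadis} gives ${\height(w)\gg \height(x)+\height(y)}$. This is the only place where almost disjointness enters, and Example~\ref{expothree} shows it is necessary.
\end{itemize}
It therefore suffices to bound~$\height(w)$, and we may assume $\height(w)$ is large.

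\emph{Bounding~$n$.} Since ${\height(w)=\sum_{v\in S}\height_v(w)}$, there is ${v\in S}$ with ${\height_v(w)\ge \height(w)/|S|}$; the case where~$w^{-1}$ is $v$-adically small instead is symmetric (swap the roles of~$x$ and~$y$).
\begin{itemize}
\item Then ${1-w=(x+y)/y}$ satisfies ${|(1-w)-1|_v=|w|_v\le e^{-c\height(w)}}$ for some ${c>0}$.
\item Write ${y^{-1}=y_0y_1^n}$ with~$y_0$ from a finite set of bounded exponents. Then ${1-w=\eps_1 s^n}$ with ${s:=ty_1}$, where ${\height(\eps_1)\ll n}$ and $\eps_1$ is a product of fixed generators with exponents ${O(n)}$.
\item We have ${\height(s)\le (\height(1-w)+\height(\eps_1))/n\ll \height(w)/n+1}$.
\item Apply Theorem~\ref{thbaker} to the form ${\eps_1s^n-1}$ in the numbers (fixed generators,~$s$) with exponent vector of size~$O(n)$. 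This gives ${|\eps_1 s^n-1|_v\ge \exp\bigl(-C\,\height'(s)\log^\ast n\bigr)}$.
\item Comparing with the upper bound yields ${c\height(w)\le C(\height(w)/n+1)\log^\ast n}$. For large~$\height(w)$ this forces ${n/\log^\ast n\ll 1}$, so~$n$ is effectively bounded.
\end{itemize}

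\emph{Fixed~$n$.} For fixed~$n$ the estimate above gives nothing, and this is where I expect the main obstacle. Here I would write ${w=w_0r^{N}}$ with~$w_0$ in a finite set, ${r\in\Gamma\Delta}$, and~$N$ a fixed large multiple (say ${N=3n}$). The equation becomes
$$
\eps_1 s^n = 1-w_0r^N,
$$
with~$\eps_1$ and~$w_0$ in finite sets, ${s\in\OO_S}$ and ${r\in\OO_S^\times}$. The right-hand side is a polynomial in~$r$ with~$N$ distinct roots. For ${n\ge2}$ and $N$ large, the superelliptic curve ${\eps_1 S^n=1-w_0R^N}$ is therefore of the type covered by the effective results of Baker and Brindza on $S$-integral points of superelliptic equations. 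This gives an effective bound on ${\height(r)}$, hence on~$\height(w)$, and so on~$\height(x)$ and~$\height(y)$. The hardest point is making sure the cited effective superelliptic theorem applies uniformly, in particular for ${n=2}$ where one needs at least three simple roots, which the choice ${N\ge 3}$ guarantees. If I wished to avoid that citation, I would instead try to reprove the fixed-$n$ case via the standard reduction to Thue--Mahler equations, again using Baker's inequality.
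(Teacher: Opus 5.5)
Your overall architecture matches the paper's. You use Corollary~\ref{coadis} to reduce to bounding $\height(w)$, a Schinzel--Tijdeman use of Baker's inequality to bound $n$, and then Theorem~\ref{thsuper} for each bounded $n$. The fixed-$n$ step is fine, and $N=3$ already suffices there. The gap is in the step bounding $n$. Your inequality $c\,\height(w)\le C(\height(w)/n+1)\log^\ast n$ does \emph{not} force $n\ll 1$ when $\height(w)$ merely exceeds a fixed constant. It only yields the dichotomy ``$n\ll 1$ or $\height(w)\ll \log^\ast n$''. The second alternative is compatible with $n\to\infty$ and $\height(w)\asymp \log n\to\infty$. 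Since $n$ is not yet bounded, assuming ``$\height(w)$ large'' cannot absorb the term $C\log^\ast n$. The obstruction is the additive $1$ in $\height(s)\ll \height(w)/n+1$, which comes from $\height(\eps_1)/n$ and from $\height'(s)\ge 1/d$ in Baker's bound. This term is harmless only when $\height(s)$ itself is large. So the case split must be on $\height(s)$, not on $\height(w)$. The case of bounded $\height(s)$ with unbounded $n$ needs a separate argument, and your plan does not contain one.

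This is how the paper's Proposition~\ref{prschtijd} is organized, and the same repair works for you. Choose a constant $H\ge 1/d$, depending only on $S$ and $d$, such that $\height(\eps_1)+\log 2\le \frac12 nH$ for all $n$; this is possible because $\height(\eps_1)\ll n$.
\begin{itemize}
\item If $\height(s)\le H$, then $s$ lies in a finite, effectively listable subset of $K$ by Northcott. In that case $w+(1-w)=1$ is a unit equation in the finitely generated group $\langle \OO_S^\times, s\rangle$, and Theorem~\ref{thunits} bounds $\height(w)$ independently of $n$.
\item If $\height(s)>H$, then $n\height(s)\le \height(w)+\log 2+\height(\eps_1)$ gives the \emph{lower} bound $\height(w)\ge \frac12 n\height(s)$. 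Meanwhile Baker's inequality, now with $\height(s)\ge 1/d$ and no additive term, gives $\height(w)\le |S|\,\height_v(w)\ll \height(s)\log^\ast n$. Dividing by $\height(s)$ yields $n\ll \log^\ast n$.
\end{itemize}
With this correction your argument goes through.
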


This theorem is inspired by Shorey and Stewart~\cite{SS83,SS87} and also by the earlier work of Schinzel and Tijdeman~\cite{ST76}. 

The hypothesis of almost disjointness cannot be dropped, see Example~\ref{expothree} in the introduction. 

Theorem~\ref{thpp} is an immediate consequence of the following theorem.

\begin{theorem}
\label{thppz}
Let~$\Lambda$ be a finitely generated subgroup of $K^\times$. Then the set of ${z\in \Lambda }$, satisfying 
\begin{equation}
\label{ezmone}
z+1 \in (K^\times)^n\Lambda
\end{equation} 
with some ${n\ge 2}$, is finite, and the heights of such~$z$ are effectively bounded in terms of~$K$ and~$\Lambda$.  
\end{theorem}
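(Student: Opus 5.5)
We want to bound the height of $z\in\Lambda$ with $z+1=w^n\lambda$, where $w\in K^\times$, $\lambda\in\Lambda$ and $n\ge2$.

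\emph{Reduction to prime exponents and bounded representatives.} First, we may assume that $n$ is prime: if $n=pq$ then $w^n=(w^q)^p$. Fix a finite set $S\subset M_K$ such that $\Lambda\le\OO_S^\times$. Enlarging $S$, we may assume that $\OO_S$ is a principal ideal domain. Since $z+1$ and $\lambda$ are $S$-units away from the primes dividing $z+1$, the number $w$ can be taken in $\OO_S$ up to units. The group $\OO_S^\times/(\OO_S^\times)^n$ is finite, so we may absorb units into $\lambda$ and assume that $\lambda$ ranges over a fixed finite set of representatives $R_n$. Their heights are $O(n)$, because $R_n$ consists of products of generators with exponents in $[0,n)$. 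The equation becomes
$$
z+1=\lambda w^n, \qquad z\in\OO_S^\times,\ w\in\OO_S,\ \lambda\in R_n.
$$
For each fixed $n$, this is a superelliptic (Thue--Mahler type) equation in the unknowns $(z,w)$. When $n\ge3$, or when $n=2$ after expanding $S$, it has effectively finitely many solutions by Baker's method, which is standard and effective. So the real task is to bound $n$ effectively.

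\emph{Bounding $n$.} We may assume $\height(z)$ is large. Then $n\height(w)=\height(\lambda w^n)+O(n)=\height(z)+O(n)$. The key step, which I expect to be the main obstacle, is to show that $n\ll\log^\ast\height(z)$, or to bound $n$ outright.

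\emph{The local argument.} Since $z$ is an $S$-unit, $\height(z)=\sum_{v\in S}\height_v(z^{-1})$, so there is $v\in S$ with $\height_v(z^{-1})\ge\height(z)/|S|$; that is, $|z|_v$ is very small (after possibly replacing $z$ by $z^{-1}$). There are two cases.
\begin{itemize}
\item If $z$ is small at $v$, then $\lambda w^n$ is $v$-adically very close to $1$.
\item If $z$ is large at $v$, then $z^{-1}+1=z^{-1}\lambda w^n$ is a similar expression, so $(z^{-1}\lambda) w^n$ is close to $1$ and we argue symmetrically.
\end{itemize}
In either case we get a linear form in logarithms: $|\lambda w^n-1|_v\le e^{-c\height(z)}$. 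Apply Baker's inequality (Theorem~\ref{thbaker}) to the two numbers $\lambda$ and $w$ with exponents $(1,n)$. This gives
$$
|\lambda w^n-1|_v\ge e^{-C\height'(\lambda)\height'(w)\log^\ast n}.
$$
Since $\height(\lambda)\ll n$ and $\height(w)\ll\height(z)/n$, the product of heights is $\ll\height(z)$, which does not contradict $c\height(z)$. This is the subtle point.

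\emph{Resolving the obstacle.} To fix this, choose $\lambda$ more cleverly. Write $\lambda=\ugamma^{\ub}$ with $0\le b_i<n$. Instead of treating $\lambda$ as a single number, treat $\lambda w^n$ as the linear form $\gamma_1^{b_1}\cdots\gamma_r^{b_r}w^n$. Here the numbers $\gamma_i$ are fixed, $\height(w)\ll\height(z)/n$, and the exponents are $\le n$. Theorem~\ref{thbaker} then gives the lower bound
$$
|\lambda w^n-1|_v\ge\exp\bigl(-C\,\frac{\height(z)}{n}\log^\ast n\bigr).
$$
Comparing with $e^{-c\height(z)}$ yields $n/\log^\ast n\ll1$, so $n$ is effectively bounded.

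If $\lambda w^n=1$ exactly, then $z=0$, which is excluded. The degenerate case where $\height(w)$ is small, so that $\height'(w)$ takes the value $1/d$, only helps. With $n$ bounded, the finitely many superelliptic equations above finish the proof, and all bounds are effective.
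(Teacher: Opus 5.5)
Your overall plan (use Baker to bound $n$, then solve finitely many superelliptic equations) matches the paper's, but the step that bounds $n$ has a genuine gap.

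You use $\height(w)\ll\height(z)/n$. What you actually have is $n\height(w)\le\height(z)+O(n)$, that is, $\height(w)\le\height(z)/n+O(1)$. Assuming ``$\height(z)$ large'' does not make the $O(1)$ negligible, because $n$ is not yet bounded. With the correct bound, Theorem~\ref{thbaker} and $|z|_v\le e^{-c\height(z)}$ give only $c\,\height(z)\le C\bigl(\height(z)/n+O(1)\bigr)\log^\ast n$. For large $n$ this yields $\height(z)\ll\log^\ast n$ and no bound on $n$. In fact $n$ cannot be bounded in general: if $z+1\in\Lambda$, then $z+1\in(K^\times)^n\Lambda$ for every $n$ (take $w=1$). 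So the case of small $\height(w)$ does not ``only help''; it is exactly the case your argument misses. The correct intermediate statement is ``either $\height(z)$ or $n$ is bounded''.

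The missing idea is the case split of Proposition~\ref{prschtijd}.
\begin{itemize}
\item If $\height(w)\le 2\sum_i\height(\gamma_i)+d^{-1}$, then by Northcott $w$ ranges over a finite set. So $z+1$ lies in the finitely generated group $\langle w,\Lambda\rangle$, and Theorem~\ref{thunits} bounds $\height(z)$ directly.
\item Otherwise $\height(\lambda)\le(n-1)\sum_i\height(\gamma_i)\le\frac12 n\height(w)$, hence $\height(z)\ge\frac12 n\height(w)-\log 2$. Your Baker estimate with the individual generators $\gamma_i$ is the right idea, and it is what the paper does; it gives $\height(z)\ll\height(w)\log^\ast n$. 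Together these two inequalities bound $n$.
\end{itemize}

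Two smaller remarks.
\begin{itemize}
\item Your second bullet is unnecessary and would in fact fail, since $z^{-1}\lambda$ has height about $\height(z)$. Because $\height(z)=\sum_{v\in S}\height_v(z)$, there is always some $v\in S$ with $|z|_v$ small, so the first case suffices.
\item For bounded $n$, the equation $z+1=\lambda w^n$ is not superelliptic with three simple roots in the variable $z$. You must exploit that $z$ is a unit. The paper writes $z=AX^3$ with $A$ from a bounded set of representatives of $\Lambda/\Lambda^3$ and $X\in\Lambda$, obtaining $AX^3+1=BY^n$; this handles $n=2$ as well, which ``expanding $S$'' does not. For $n\ge3$, writing $z=AX^n$ and solving the Thue--Mahler equation $\lambda w^n-AX^n=1$ would also work.
\end{itemize}
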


If~$x$ and~$y$ are as in Theorem~\ref{thpp} then ${z:= x/y}$ belongs to ${\Lambda:=\langle\Gamma,\Delta\rangle}$ and satisfies~\eqref{ezmone} with some  ${n\ge 2}$. Theorem~\ref{thppz} bounds the height of~$z$, which bounds the heights of~$x$ and~$y$ by  Corollary~\ref{coadis}. 

\bigskip

We denote ${d:=[K:\Q ]}$.  
Theorem~\ref{thppz} is a consequence of two statements: Proposition~\ref{prschtijd} and Theorem~\ref{thsuper}.

\begin{proposition}
\label{prschtijd}
If ${n\ge1}$ and  ${z\in \OO_S}$ are such that~\eqref{ezmone} holds then either ${\height(z) \le C}$ or ${n\le C}$, where ${C>0}$ effectively depends on~$\Lambda$ and~$d$. 
\end{proposition}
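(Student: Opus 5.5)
The idea is a Schinzel--Tijdeman type argument. Baker's inequality gives an upper bound for $\height(z)$ that grows only like $\log n$. A prime outside~$S$ dividing $z+1$ gives a lower bound linear in~$n$. The case where no such prime exists is an $S$-unit equation. Throughout, fix $S\subset M_K$ finite with $\Lambda\le\OO_S^\times$, and free generators $\gamma_1,\ldots,\gamma_r$ of $\Lambda$ modulo its torsion. I read the hypothesis as $z\in\Lambda$, as in Theorem~\ref{thppz}. We may assume $n\ge2$ and $z\ne0$: since $0\in\Lambda$ fails, $z\ne 0$ holds automatically.

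\textbf{Step 1: normalization.} Write $z+1=w^n\lambda$ with $w\in K^\times$ and $\lambda\in\Lambda$. Absorb into $w^n$ the part of $\lambda$ lying in $\Lambda^n$. Then $z+1=\lambda_0u^n$, where $\lambda_0=\xi\ugamma^{\ub}$ with $\xi$ a root of unity, $\ub\in\Z^r$, $0\le b_i<n$, and $u\in K^\times$. By~\eqref{ehmutriv}, $\height(\lambda_0)\ll n$. Hence
$$
n\height(u)=\height(u^n)\le \height(z+1)+\height(\lambda_0)\le \height(z)+O(n),
$$
so $\height(u)\ll \height(z)/n+1$.

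\textbf{Step 2: Baker upper bound.} Since $z\in\OO_S^\times$, we have $\height(z)=\sum_{v\in S}\height_v(z)=\sum_{v\in S}\height_v(\lambda_0u^n-1)$. Apply Theorem~\ref{thbaker} to the $r+2$ numbers $\xi,\gamma_1,\ldots,\gamma_r,u$ with exponent vector $(1,\ub,n)$, whose sup-norm is at most~$n$. This is allowed because $\lambda_0u^n\ne1$, as $z\ne0$. The $\gamma_i$ are fixed, so it gives $\height_v(z)\ll \height'(u)\log^\ast n\ll(\height(z)/n+1)\log n$ for each $v\in S$. Summing over $S$ yields $\height(z)\le C_1(\height(z)/n+1)\log n$. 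Once $n$ exceeds a constant making $C_1\log n/n\le 1/2$, we get $\height(z)\ll\log n$. For smaller $n$ the conclusion $n\le C$ already holds.

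\textbf{Step 3: lower bound or unit equation.} If $z+1\in\OO_S^\times$, then $z$ and $z+1$ solve an $S$-unit equation. Theorem~\ref{thunits} bounds $\height(z)$ effectively, giving the alternative $\height(z)\le C$. Otherwise some $v\notin S$ has $|z+1|_v<1$. For such $v$ we have $|\lambda_0|_v=1$, so $|z+1|_v=|u|_v^n$ and the valuation of $z+1$ at~$v$ is a positive multiple of~$n$. Then~\eqref{eimplies} applied to the $n$-th power gives $d\,\height_v(z+1)\ge n\log\NN v\ge n\log 2$. Consequently $\height(z)\ge\height(z+1)-\log2\ge n\log2/d-\log 2$. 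Combining this with $\height(z)\ll\log n$ from Step~2 bounds $n$ effectively, which is the alternative $n\le C$. All constants depend only on $\Lambda$, $S$ (hence on $K$) and $d$, and are effective.

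The main obstacle is Step~2. The point is to set up Baker's inequality with $u$ as an extra, non-fixed logarithm whose height is controlled by $\height(z)/n$. The exponents of $\lambda_0$ must be kept below $n$, so that only a $\log n$ factor appears and the $\height(z)$ term can be absorbed for large $n$. I would check in particular that $\height'(u)$ enters linearly with the other heights fixed. If $\height(u)$ is tiny this only helps.
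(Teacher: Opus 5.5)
Your proof is correct. It runs on the same Schinzel--Tijdeman mechanism as the paper: write $z+1=w^n\eta_1^{b_1}\cdots\eta_r^{b_r}$ with $0\le b_i<n$, use Baker to get $\height(z)=\sum_{v\in S}\height_v(z)\ll\height'(w)\log^\ast n$, and play this against a lower bound linear in~$n$. The difference is how you organize the dichotomy.

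The paper splits on the size of $\height(w)$.
\begin{itemize}
\item If $\height(w)\le 2\sum\height(\eta_i)+d^{-1}$, then $z+1$ lies in the enlarged group $\langle w,\Lambda\rangle$. Theorem~\ref{thunits} then bounds $\height(z)$, implicitly using that there are only finitely many such $w\in K$.
\item Otherwise the $\Lambda$-part is dominated, which gives essentially $\height(z)\ge\frac12 n\height(w)$. The unknown $\height(w)$ then cancels against Baker's upper bound, leaving $n\ll\log^\ast n$.
\end{itemize}

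You proceed differently.
\begin{itemize}
\item You feed the trivial estimate $\height(u)\le\height(z)/n+O(1)$ into Baker and absorb the $\height(z)$ term, getting $\height(z)\ll\log n$ for large~$n$.
\item You split on whether $z+1\in\OO_S^\times$. The unit case is a unit equation over the fixed group $\OO_S^\times$, with no enlargement by~$w$ and no Northcott argument.
\item In the non-unit case, the linear lower bound is arithmetic. A prime $v\notin S$ at which $z+1=\lambda_0u^n$ has positive valuation forces $d\,\height_v(z+1)\ge n\log\NN v$, via~\eqref{eimplies}.
\end{itemize}
So you trade the paper's global height comparison for a local, prime-divisor lower bound, which makes the exceptional case cleaner. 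The paper's route never looks outside~$S$ and needs no absorption step.

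Two further points are handled correctly in your write-up.
\begin{itemize}
\item You implicitly take $\height'=\max\{\height,1/d\}$ in Theorem~\ref{thbaker}. That is the intended definition; the ``min'' there is a typo, and the paper's own proof uses it as a max.
\item Your reading of the hypothesis as $z\in\Lambda$ is necessary. With only $z\in\OO_S$ the statement fails: take $z=p^n-1$ with $p$ a rational prime outside~$S$. The paper's proof also uses $\height(z)=\sum_{v\in S}\height_v(z)$, which requires $z\in\OO_S^\times$.
\end{itemize}
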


\begin{proof}
The constants in this proof implied by the $\ll$ or $\gg$ notation 
effectively depend on~$\Lambda$ and~$d$.  

Fix a set of generators ${\eta_1, \ldots, \eta_r}$ of~$\Lambda$. Then there exists ${w\in K^\times}$ such that 
$$
z+1=w^n \eta_1^{b_1}\cdots \eta_r^{b_r}, 
$$
where ${b_1, \ldots, b_r \in \Z}$ satisfy
${0\le b_1, \ldots, b_r \le n-1}$. 
Assume first that 
\begin{equation}
\label{ewsmall}
\height(w) \le 2\bigl(\height(\eta_1)+\cdots+\height(\eta_r)\bigr)+d^{-1}. 
\end{equation}
Let ${\Lambda':=\langle w,\Lambda\rangle}$ be the multiplicative group generated by~$w$ and~$\Lambda$. Then ${z+1\in \Lambda'}$, and Theorem~\ref{thunits} bounds $\height(z)$ in terms of~$\Lambda'$, that is, in terms of~$\Lambda$ and $\height(w)$. But since $\height(w)$ is itself bounded in terms of~$\Lambda$, we bound $\height(z)$ in terms of~$\Lambda$. This completes the proof in case~\eqref{ewsmall}.

From now on, we assume that 
\begin{equation}
\label{ewbig}
\height(w) \ge 2\bigl(\height(\eta_1)+\cdots+\height(\eta_r)\bigr)+d^{-1}. 
\end{equation}
This implies that
$$
b_1\height(\eta_1)+\cdots+b_r\height(\eta_r)\le \frac12n\height(w). 
$$
Hence 
\begin{equation}
\label{ehzbig}
\height(z) \ge n \height(w)- b_1\height(\eta_1)-\cdots-b_r\height(\eta_r) \ge \frac12n\height(w). 
\end{equation}
Theorem~\ref{thbaker} implies that, for every ${v \in M_K}$, 
$$
|z|_v \ge \left|w^n \eta_1^{b_1}\cdots \eta_r^{b_r}-1\right|_v \ge e^{-C_1\height(w)\log^\ast  n},
$$
where~$C_1$ effectively depends on~$\Lambda$ and~$d$. (We may write $\height(w)$ instead of $\height'(w)$ due to the term $d^{-1}$ in~\eqref{ewbig}). In terms of the local heights, this can be expressed as 
$$
\height_v(z) \ll \height(w)\log^\ast  n \qquad (v \in M_K). 
$$
Since~$\Lambda$ is a  finitely generated group, there exists a finite set ${S\subset M_K}$ such that ${\Lambda \le \OO_S^\times}$.  It follows that 
$$
\height(z) =\sum_{v\in S} \height_v(z) \ll \height(w)\log^\ast n. 
$$ 
Together with~\eqref{ehzbig}, this bounds~$n$. 
\end{proof}

This argument is, essentially, due to Schinzel and Tijdeman~\cite{ST76}.

\bigskip

The other ingredient needed for the proof of Theorem~\ref{thppz}   is a classical result about superelliptic equations; see, for instance,~\cite{BBGMOS25} and the references therein. 
\begin{theorem}
\label{thsuper}
Let ${f(X) \in K[X]}$ be a polynomial with at least~$3$ simple roots, and ${n\ge 2}$. Let~$S$ be a finite subset of~$M_K$. Then solutions of the equation ${Y^n=F(X)}$ in ${X\in \OO_S}$ and ${Y\in K}$ have heights effectively bounded in terms of~$f$,~$K$ and~$n$. 
\end{theorem}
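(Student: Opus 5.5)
The plan is to reduce the superelliptic equation to finitely many $S$-unit equations and then apply Theorem~\ref{thunits}. This is the classical route of Siegel and Baker.

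\emph{Normalization.} First I normalize the data. Enlarge~$K$ to contain the splitting field of~$f$ and a primitive $n$-th root of unity~$\zeta_n$. Then enlarge~$S$ (effectively, in terms of~$f$, $K$ and~$n$) so that:
\begin{itemize}
\item the leading coefficient~$a_0$ of~$f$ is an $S$-unit;
\item all roots~$\alpha_i$ of~$f$ lie in~$\OO_S$;
\item all differences $\alpha_i-\alpha_j$ ($i\ne j$) are $S$-units;
\item $n$ is an $S$-unit;
\item the ring~$\OO_S$ is a principal ideal domain (add to~$S$ primes generating the class group).
\end{itemize}
Since~$\OO_S$ is integrally closed and $Y^n=f(X)\in\OO_S$, we get ${Y\in\OO_S}$. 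Write ${f=a_0\prod_i(X-\alpha_i)^{e_i}}$ and pick three simple roots $\alpha_1,\alpha_2,\alpha_3$. For ${X\in\OO_S}$ the elements $X-\alpha_i$ are pairwise coprime in~$\OO_S$ up to units, because their differences are $S$-units. Unique factorization in~$\OO_S$ then gives
$$
X-\alpha_i=u_i w_i^n \qquad (i=1,2,3),
$$
with ${w_i\in\OO_S}$ and~$u_i$ running over a fixed, effectively computable set of representatives of $\OO_S^\times/(\OO_S^\times)^n$. This set is finite since $\OO_S^\times$ is finitely generated.

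\emph{Passing to a larger field.} For each of the finitely many choices of $(u_1,u_2,u_3)$, pass to ${L:=K(u_1^{1/n},u_2^{1/n},u_3^{1/n})}$ and let~$S_L$ be the set of primes of~$L$ above~$S$. In~$L$ we have ${X-\alpha_i=\beta_i^n}$ with $\beta_i\in\OO_{S_L}$. Then
$$
\prod_{k=0}^{n-1}(\beta_1-\zeta_n^k\beta_2)=\beta_1^n-\beta_2^n=\alpha_2-\alpha_1\in\OO_{S_L}^\times,
$$
so every factor $\beta_i-\zeta_n^k\beta_j$ is an $S_L$-unit. Siegel's identity
$$
(\beta_1-\beta_2)+(\beta_2-\beta_3)+(\beta_3-\beta_1)=0
$$
is then an $S_L$-unit equation in two unknowns after dividing by ${\beta_3-\beta_1}$. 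Theorem~\ref{thunits} effectively bounds the heights of the ratios.

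\emph{Recovering $X$.} From the bounded ratios I need the absolute sizes, and for this I use a twisted factor. Take ${\zeta:=\zeta_n\ne1}$ (for ${n=2}$ this is $-1$). Then $\beta_1-\beta_2$ and $\beta_1-\zeta\beta_2$ are units whose product divides the constant $\alpha_2-\alpha_1$. Apply Theorem~\ref{thunits} also to
$$
(\beta_1-\beta_2)-(\beta_1-\zeta\beta_2)=(\zeta-1)\beta_2
$$
combined with Siegel's relation (or use all $n$ factors, whose ratios are bounded, together with the fact that their product is the fixed number $\alpha_2-\alpha_1$). This bounds each factor's height. Solving the nondegenerate linear system in $\beta_1,\beta_2$ bounds $\height(\beta_1)$. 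Hence $\height(X)=\height(\beta_1^n+\alpha_1)$ is bounded, and then $\height(Y)$ is bounded since ${n\height(Y)=\height(f(X))}$.

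\emph{Main obstacle.} The main difficulty is not the unit equation itself but the effectivity bookkeeping. One has to check that the field~$L$, the set~$S_L$, representatives of the class group, and generators of $\OO_S^\times$ modulo $n$-th powers can all be determined effectively in terms of $f$, $K$ and~$n$, so that the constants coming from Theorem~\ref{thunits} (ultimately Theorem~\ref{thbaker}) depend only on these data. Here I would rely on standard effective algebraic number theory, with bounds on regulators and class numbers. The dependence on~$n$ is allowed by the statement, so the growth of $[L:\Q]$ with~$n$ causes no problem.
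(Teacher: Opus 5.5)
The paper does not prove Theorem~\ref{thsuper}. It quotes it as a classical effective result, going back to Baker, and refers to \cite{BBGMOS25}, so there is no internal proof to compare with. Your reconstruction is the standard Siegel-type reduction to $S$-unit equations, and its overall architecture is sound.

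\begin{itemize}
\item After your normalization, simplicity of $\alpha_1,\alpha_2,\alpha_3$, pairwise coprimality of the $X-\alpha_i$ and the principal ideal property give $X-\alpha_i=u_iw_i^n$.
\item Over $L$, every $\beta_i-\zeta_n^k\beta_j$ divides $\alpha_j-\alpha_i$, hence is an $S_L$-unit.
\item The effectivity issues you list are the right ones.
\item The bound necessarily depends on $S$ as well, which the statement in the paper omits.
\end{itemize}

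The step that is not right as written is the recovery of $X$. The identity $(\beta_1-\beta_2)-(\beta_1-\zeta\beta_2)=(\zeta-1)\beta_2$ is not an $S_L$-unit equation, because $\beta_2$ need not be a unit. So Theorem~\ref{thunits} does not apply to it.

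Your alternative asserts, without proof, that the ratios of the $n$ factors $\beta_1-\zeta^k\beta_2$ have bounded height. For $n\ge 3$ this does follow from the three-term relation
$(\zeta^l-\zeta^m)(\beta_1-\zeta^k\beta_2)+(\zeta^m-\zeta^k)(\beta_1-\zeta^l\beta_2)+(\zeta^k-\zeta^l)(\beta_1-\zeta^m\beta_2)=0$.
For $n=2$ there are only the two factors $\beta_1\pm\beta_2$, and nothing bounds their ratio without bringing in $\beta_3$. The paper needs this case, since it applies the theorem to $AX^3+1=BY^n$ with arbitrary bounded $n\ge 2$.

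A fix that works for every $n\ge2$ is the twisted Siegel identity
$$
(\beta_1-\zeta^k\beta_2)+\zeta^k(\beta_2-\beta_3)=\beta_1-\zeta^k\beta_3 \qquad (0\le k\le n-1).
$$
Every term here is an $S_L$-unit. Dividing by the right-hand side, Theorem~\ref{thunits} bounds the height of $r_k:=(\beta_1-\zeta^k\beta_2)/(\beta_2-\beta_3)$ for every $k$.

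Next, $\prod_k(\beta_1-\zeta^k\beta_2)=\alpha_2-\alpha_1$ gives $(\beta_2-\beta_3)^n=(\alpha_2-\alpha_1)/\prod_k r_k$. This bounds $\height(\beta_2-\beta_3)$, and hence the height of each factor $\beta_1-\zeta^k\beta_2=r_k(\beta_2-\beta_3)$.

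Finally, $\beta_2=\bigl((\beta_1-\beta_2)-(\beta_1-\zeta\beta_2)\bigr)/(\zeta-1)$ and $X=\beta_2^n+\alpha_2$ have bounded height. With this replacement your argument is complete.
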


Now we are ready to complete the proof of Theorem~\ref{thppz}. 

\begin{proof}[Proof of Theorem~\ref{thppz}]
Let ${z\in \Lambda}$  be such that ${z+1\in (K^\times)^n\Lambda}$ with ${n\ge 2}$. Pro\-po\-sition~\ref{prschtijd} implies that one of $\height(z)$ or~$n$ is bounded. If $\height(z)$ is bounded then we are done, so we may assume that~$n$ is bounded.

Let $\Sigma_3$ be a set of representatives of all cosets of $\Lambda/\Lambda^3$. We may select these representatives to have their heights bounded in terms of~$\Lambda$. Then ${z=AX^3}$ with ${A\in \Sigma_3}$ and ${X \in \Lambda\subset \OO_S}$,  where~$S$ is as in the proof of Proposition~\ref{prschtijd}. 
Similarly, select a set~$\Sigma_n$ of bounded\footnote{They are bounded in terms of~$\Lambda$ and~$n$, but, since~$n$ is itself bounded, they are bounded in terms of~$\Lambda$ and~$d$.} representatives of $\Lambda/\Lambda^n$. Then ${z+1= BY^n}$, where ${B\in \Sigma_n}$
and ${Y\in K}$.

Applying Theorem~\ref{thsuper} to the equation ${AX^3+1= BY^n}$, we bound the heights of~$X$ and~$Y$, which bounds the height of~$z$. 
Theorem~\ref{thpp}  is proved.  
\end{proof}

Here is a corollary similar to Corollary~\ref{corat} of Theorem~\ref{thunits}.

\begin{corollary}
\label{coratpow}
Let~$K$ and~$\Lambda$ be as in Theorem~\ref{thppz}. Let  ${F(T)\in K(T)}$  be a non-zero rational function having a simple pole or a  simple zero in~$\bar\Q^\times$;  that is,  the denominator or the numerator of~$F$  has a  simple root in~$\bar\Q^\times$.
Then the set of ${z\in \Lambda }$, satisfying ${F(z) \in (K^\times)^n\Lambda}$ 
with some ${n\ge 2}$, is finite, and the heights of such~$z$ are effectively bounded in terms of~$K$,~$\Lambda$ and~$F$.  
\end{corollary}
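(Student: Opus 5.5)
The plan is to mimic the proof of Corollary~\ref{corat}, with Theorem~\ref{thppz} playing the role of Theorem~\ref{thunits}. Write ${F=f/g}$ with coprime ${f,g\in K[T]}$. Replacing~$F$ by $F^{-1}$ does not change the condition ${F(z)\in (K^\times)^n\Lambda}$, since this set is a group. So we may assume that~$f$ has a simple root ${\alpha\in\bar\Q^\times}$.

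First I reduce to the case ${\alpha\in K}$. Pass to ${L:=K(\alpha)}$; then ${F(z)\in (L^\times)^n\Lambda}$ as well. I will apply Theorem~\ref{thppz} over~$L$, for a suitably enlarged group. Enlarge~$\Lambda$ to a finitely generated ${\Lambda'\le L^\times}$ containing~$\Lambda$,~$\alpha$, $g(\alpha)$, $a(\alpha)$ (where ${f=(T-\alpha)a}$, so ${a(\alpha)\ne0}$ by simplicity), the leading coefficients, and all the $S$-units needed below. Choose a finite ${S\subset M_L}$ with ${\Lambda'\le\OO_S^\times}$, with $f,g,a\in\OO_S[T]$, and with $\OO_S$ a principal ideal domain (enlarge~$S$ to kill the class group). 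Then~$\Lambda'$ may be taken to be the full group $\OO_S^\times$.

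Next comes the divisibility step. For ${z\in\OO_S^\times}$, the greatest common divisor in~$\OO_S$ of ${z-\alpha}$ with $g(z)$ divides $g(\alpha)$, which is a unit, as in the proof of Corollary~\ref{corat}. Similarly, the gcd of ${z-\alpha}$ with $a(z)$ divides ${a(z)-(z-\alpha)b(z)=a(\alpha)}$, also a unit. Now write ${F(z)=w^n\lambda}$ and factor
$$
(z-\alpha)\,a(z)=f(z)=w^n\lambda\, g(z).
$$
Consider prime ideals~$\gerp$ of~$\OO_S$ dividing ${z-\alpha}$. Such a~$\gerp$ divides neither $a(z)$ nor $g(z)$, so ${v_\gerp(z-\alpha)=n\,v_\gerp(w)}$. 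Since $\OO_S$ is a PID, we get ${z-\alpha=u\,\omega^n}$ with ${u\in\OO_S^\times}$ and ${\omega\in\OO_S}$.

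Hence ${z/\alpha-1\in (L^\times)^n\Lambda'}$, so ${-z/\alpha+1}$ lies in the same set up to sign, which we absorb by putting~$-1$ into~$\Lambda'$. Theorem~\ref{thppz}, applied with the field~$L$, the group~$\Lambda'$ and the element ${-z/\alpha\in\Lambda'}$, bounds $\height(z/\alpha)$ and therefore $\height(z)$. All constants are effective and depend only on~$K$,~$\Lambda$ and~$F$: the choice of~$L$,~$S$ and~$\Lambda'$ depends only on these.

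The main obstacle is making sure no exponent~$n$ leaks in from primes dividing both ${z-\alpha}$ and the other factors; this is exactly why simplicity of the root is needed. The coprimality argument handles it via $a(\alpha)\ne0$ and ${g(\alpha)\ne0}$. A secondary point is the class-group issue, which is resolved by enlarging~$S$.
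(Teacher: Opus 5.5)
Your proposal is correct and follows essentially the same route as the paper. In a principal ring $\OO_S$, the units $f'(\alpha)$ and $g(\alpha)$ make $z-\alpha$ coprime to $f(z)/(z-\alpha)$ and to $g(z)$, which forces $z-\alpha\in (K^\times)^n\OO_S^\times$, and then Theorem~\ref{thppz} bounds $\height(z)$. Your explicit normalization to $-z/\alpha+1$, with $\alpha$ and $-1$ added to the group, just spells out a step the paper leaves implicit.
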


\begin{proof}
Write ${F(T)=f(T)/g(T)}$, where ${f(T), g(T)\in K[T]}$ are coprime polynomials.  By the hypothesis, one of~$f$,~$g$ must have a simple root in $\bar\Q^\times$ and we may assume that~$f$ has such a root, denoted by~$\alpha$. Expanding the field~$K$ and the group~$\Lambda$, we may assume that~$\Lambda$ contains~$\alpha$, $f'(\alpha)$  and $g(\alpha)$. 

Let~$S$ be a finite subset of $M_K$ such that 
$$
\Lambda \le \OO_S^\times, \qquad f(T), g(T) \in \OO_S[T],  
$$
and~$\OO_S$ is a principal ideal ring. Set ${e(T):=f(T)/(T-\alpha)}$. Then ${e(T)\in \OO_S[T]}$ and ${e(\alpha) =f'(\alpha)\in \OO_S^\times}$. There exist polynomials ${a(T), b(T) \in \OO_S|T]}$ such that 
$$
e(T)-a(T)(T-\alpha)=e(\alpha) \in \OO_S^\times, \qquad g(T)-b(T) (T-\alpha)=g(\alpha) \in \OO_S^\times. 
$$
It follows that for any ${z\in \OO_S}$, the number ${z-\alpha}$ is coprime with $e(z)$ and with $g(z)$ in the principal ideal ring $\OO_S$. 

Now let ${z\in \OO_S^\times}$ be such that 
$$
F(z) = \frac{(z-\alpha)e(z)}{g(z)} \in (K^\times)^n\OO_S^\times 
$$
with ${n\ge 2}$. By the coprimarity indicated above, the principal ideal ${(z-\alpha)}$ must be an $n$th power of another ideal of $\OO_S$. Since it is a principal ideal ring, we have ${z-\alpha \in (K^\times)^n\OO_S^\times}$, and Theorem~\ref{thppz} bounds $\height(z)$.  
\end{proof}


\section{Equations in multiplicative groups}
\label{smulti}

Let~$\Gamma$ and~$\Delta$ be finitely generated almost disjoint subgroups of $\bar\Q^\times$. In this section we study the equation ${x_1+y_1=x_2+y_2}$ in ${x_1,x_2\in \Gamma}$ and ${y_1,y_2\in \Delta}$. We use the classical theory of exponential  Diophantine equations to show, non-effectively, that this equation may have at most finitely many non-trivial solution (those with ${x_1\ne x_2}$), see Proposition~\ref{proo}. We also show (see Theorem~\ref{thrankone}) that this can be made effective in the important special case when both~$\Gamma$ and~$\Delta$ are of rank~$1$ (that is, a product of a finite cyclic group and an infinite cyclic group).

We will be using the  classical theorem about exponential Diophantine equations, proved independently by Evertse,  Schlickewei and van der Poorten, see \cite[Theorem~1]{Ev84}, \cite[Théorème~1]{La84} and the references therein.

Let~$\Lambda$ be a finitely generated subgroup of~$\bar\Q^\times$.   We are interested in solutions ${\ux=(x_1,\ldots,x_m)\in \Lambda^m}$ of the equation 
\begin{equation}
\label{eexp}
x_1+\cdots+x_m=0. 
\end{equation}
We call two solutions ${\ux, \uy \in \Lambda^m}$ \textbf{equivalent} if ${\uy=\lambda\ux}$ for some ${\lambda\in\Lambda}$. We call a solution~$\ux$ \textit{primitive} if no proper sub-sum of ${x_1+\cdots+x_m}$ vanishes.

\begin{theorem}[Evertse, Schlickewej, van der Poorten]
\label{thexp}
Equation~\eqref{eexp} has, up to equivalence, at most finitely many primitive solutions ${\ux\in \Lambda^m}$. 
\end{theorem}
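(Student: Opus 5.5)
Since Theorem~\ref{thexp} is classical and non-effective, I would follow the standard route through the $p$-adic Subspace Theorem of Schmidt and Schlickewei, arguing by induction on~$m$. First, fix a number field~$K$ containing~$\Lambda$ and a finite ${S\subset M_K}$ with ${\Lambda\le\OO_S^\times}$; it then suffices to work in~$\OO_S^\times$. Given a primitive solution~$\ux$, replace it by the equivalent solution $\ux/(-x_m)$, after which the claim becomes: the weighted $S$-unit equation
$$
a_1u_1+\cdots+a_nu_n=1 \qquad (u_i\in\OO_S^\times)
$$
has finitely many \emph{non-degenerate} solutions, meaning solutions with no vanishing subsum of the left-hand side. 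Here ${n=m-1}$ and the ${a_i\in K^\times}$ are fixed. I would prove this more general weighted statement, with arbitrary fixed coefficients, because the induction produces new coefficients.

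The induction step goes as follows. Consider the point ${\uu=(u_1,\ldots,u_n)}$, viewed as the vector ${(a_1u_1,\ldots,a_nu_n)}$. For each ${v\in S}$, take the $n$ linear forms consisting of the coordinate forms $X_i$, except that the coordinate where $|a_iu_i|_v$ is largest is replaced by the form ${X_1+\cdots+X_n}$, whose value is~$1$. Finitely many choices of index occur, so I would fix one pattern. By the product formula, ${\prod_{v\in S}|u_i|_v=1}$ for $S$-units. Hence the double product ${\prod_{v\in S}\prod_i|L_{i,v}(\uu)|_v}$ equals the product of the dropped maximal coordinates raised to the power $-1$, which is ${\ll H(\uu)^{-1}}$. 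In particular it is ${\le H(\uu)^{-\eps}}$ for large height, since the largest coordinate controls the height. The Subspace Theorem then places all such solutions in finitely many proper subspaces ${c_1X_1+\cdots+c_nX_n=0}$.

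On a fixed subspace, I would combine the relation with the original equation to eliminate a variable. Choose $i$ with ${c_i\ne0}$. Then the solution satisfies a weighted equation in fewer unknowns, possibly after dividing by one of the $u_j$ to renormalize the right-hand side to~$1$. The main obstacle is that the new equation may have vanishing subsums even though the original did not. I would handle this by splitting the new relation into its minimal non-degenerate blocks. Each block is a weighted unit equation of smaller length, and by the inductive hypothesis (applied up to equivalence) each block determines the ratios $u_j/u_k$ within it up to finitely many possibilities. The remaining step is to argue that the original non-degeneracy forces enough of these ratios to be linked. Feeding back into ${\sum a_iu_i=1}$ then gives a non-degenerate equation in fewer variables with finitely many coefficient choices, to which the induction applies. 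This bookkeeping of degenerate sub-relations is where I expect the real care to be needed.

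The base case ${n=1}$ is trivial. The case ${n=2}$ is exactly Theorem~\ref{thunits} with coefficients, after absorbing $a_1,a_2$ into an enlarged group.
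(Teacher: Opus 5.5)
The paper does not prove Theorem~\ref{thexp}: it quotes the result from Evertse and van der Poorten--Schlickewei and remarks only that the proof rests on the Subspace Theorem. Your sketch is correct and is exactly the standard argument behind those references: normalize a primitive solution to a non-degenerate weighted $S$-unit equation, apply Schlickewei's $p$-adic Subspace Theorem with the forms $X_i$ and $X_1+\cdots+X_n$, then induct on the number of terms. The bookkeeping you flag is routine, because every minimal vanishing block has at least two terms, so grouping each block always lowers the number of unknowns, and the non-degeneracy of the original equation is what makes the grouped coefficients non-zero and the reduced equation non-degenerate. More simply still, eliminating $X_i$ directly already gives right-hand side~$1$, the block containing this constant confines some $u_j$ to a finite set, and you then divide $\sum_{l\ne j}a_lu_l=1-a_ju_j$ by its non-zero right-hand side and apply the induction hypothesis.
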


Theorem~\ref{thexp}  is non-effective (its proof is based on the subspace theorem).


\subsection{A non-effective result}

In this subsection we use Theorem~\ref{thexp} to prove the following statement. 

\begin{proposition}
\label{proo}
Let~$\Gamma$ and~$\Delta$ be  finitely generated almost disjoint subgroups of $\bar\Q^\times$. 
Then the equation 
\begin{equation}
\label{erone}
x_1+y_1=x_2+y_2
\end{equation}
has at most finitely many solutions satisfying
\begin{equation}
\label{econds}
x_1,x_2\in \Gamma, \qquad y_1,y_2\in \Delta, \qquad x_1\ne x_2. 
\end{equation}
\end{proposition}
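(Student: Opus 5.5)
We rewrite \eqref{erone} as the four-term equation
$$
x_1-x_2+y_1-y_2=0
$$
in the group $\Lambda:=\langle\Gamma,\Delta,-1\rangle$, and apply Theorem~\ref{thexp} with $m=4$ (and, for vanishing subsums, with $m=2$). We split according to which proper subsums vanish.

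\emph{Case 1: no proper subsum vanishes.} Then $(x_1,-x_2,y_1,-y_2)$ is a primitive solution. By Theorem~\ref{thexp} it is equivalent to one of finitely many fixed solutions $(a_1,a_2,a_3,a_4)$, so $x_1=\lambda a_1$, $-x_2=\lambda a_2$, $y_1=\lambda a_3$, $-y_2=\lambda a_4$ for some $\lambda\in\Lambda$. Hence the ratios $x_1/x_2=-a_1/a_2$ and $y_1/y_2=-a_3/a_4$ take finitely many values, and so does $x_1/y_1=a_1/a_3$. Fix one such value $c=x_1/y_1$, and suppose $(x_1,y_1)$ and $(x_1',y_1')$ both give $c$. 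Then $x_1/x_1'=y_1/y_1'$ lies in $\Gamma\cap\Delta$, which is finite by almost disjointness. Thus $(x_1,y_1)$ is determined up to finitely many choices. Since the ratios $x_1/x_2$ and $y_1/y_2$ are also fixed, $(x_2,y_2)$ is determined as well. This gives finitely many solutions. This is the key use of almost disjointness, and the same trick handles the remaining cases.

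\emph{Case 2: some proper subsum vanishes.} The possible vanishing subsums of two terms are:
\begin{itemize}
\item $x_1-x_2=0$, which is excluded by \eqref{econds}; its complementary subsum $y_1-y_2=0$ then also vanishes, so this is the same excluded case.
\item $x_1+y_1=0$ together with $x_2+y_2=0$. Then $x_1=-y_1\in\Gamma\cap(-\Delta)$. This set is finite, because if $-y_1$ and $-y_1'$ both lie in $\Gamma$, then $y_1/y_1'\in\Gamma\cap\Delta$. The same holds for $(x_2,y_2)$, giving finitely many solutions.
\item $x_1-y_2=0$ together with $y_1-x_2=0$. Then $x_1=y_2\in\Gamma\cap\Delta$ and $x_2=y_1\in\Gamma\cap\Delta$, again finitely many.
\end{itemize}
A vanishing subsum of one term is impossible, since all terms are nonzero. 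A vanishing subsum of three terms forces the remaining single term to vanish, which is also impossible.

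The main subtlety is the passage in Case~1 from finitely many solutions up to equivalence (that is, up to scaling by $\lambda\in\Lambda$) to genuinely finitely many solutions. This works because fixing the ratio $x_1/y_1$ pins down the pair modulo the finite group $\Gamma\cap\Delta$. Everything else is bookkeeping over the vanishing-subsum patterns.
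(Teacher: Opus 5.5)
Your proof is correct and follows essentially the same route as the paper. Both apply the Evertse--Schlickewei--van der Poorten theorem to the four-term relation, use almost disjointness to get finiteness inside each equivalence class, and dispose of the three non-primitive patterns: two by almost disjointness and one by the hypothesis $x_1\ne x_2$. Two details you spell out are left implicit in the paper: including $-1$ in $\Lambda$ so that $-x_2,-y_2\in\Lambda$, and the argument that a fixed ratio $x_1/y_1$ determines $(x_1,y_1)$ up to the finite group $\Gamma\cap\Delta$.
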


\begin{proof}[Proof of Proposition~\ref{proo}]
Let~$\Lambda$ be the group generated by~$\Gamma$ and~$\Delta$. By Theorem~\ref{thexp}, the equation ${z_1+z_2+z_3+z_4=0}$ has, up to equivalence, at most  finitely many primitive solutions ${\uz\in \Lambda^4}$ . Since~$\Gamma$ and~$\Delta$ are almost disjoint,  each equivalence class may contain at most finitely many solutions of the form $(x_1,y_1,-x_2, -y_2)$ with ${x_1,x_2 \in \Gamma}$ and ${y_1,y_2\in \Delta}$. 

Now assume that $(x_1,y_1,- x_2, - y_2)$ is not a primitive solution.  Then we have one of the three options:
\begin{align}
\label{exoyo}
&x_1+y_1=0, \qquad x_2+y_2=0;\\
\label{exoyt}
&x_1-y_2=0, \qquad y_1- x_2=0;\\
\label{exoxt}
&x_1-x_2=0, \qquad y_1-y_2=0. 
\end{align}
Since~$\Gamma$ and~$\Delta$ are almost disjoint, options~\eqref{exoyo} and~\eqref{exoyt} may occur only finitely often. We have proved that, with finitely many exceptions, we have~\eqref{exoxt}. 
\end{proof}

Since Theorem~\ref{thexp}  is non-effective, so is Proposition~\ref{proo}. 

\subsection{An effective result}
Proposition~\ref{proo} can be made effective in the (quite important) special  case when both~$\Gamma$ and~$\Delta$ are of rank~$1$.

\begin{theorem}
\label{thrankone}
In the set-up of Proposition~\ref{proo}, assume in addition that~$\Gamma$ and~$\Delta$ are both of rank~$1$. Then  the solutions of~\eqref{erone}, satisfying~\eqref{econds}, have heights effectively bounded in terms of~$\Gamma$ and~$\Delta$. 
\end{theorem}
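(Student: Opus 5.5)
Write $\Gamma=\langle\xi\rangle\times\langle\gamma\rangle$ and $\Delta=\langle\xi'\rangle\times\langle\delta\rangle$, with $\xi,\xi'$ roots of unity and $\gamma,\delta$ of infinite order; by almost disjointness $\gamma$ and $\delta$ are multiplicatively independent. Since the torsion parts are finite, we may fix the roots of unity and write $x_k=\xi_k\gamma^{a_k}$, $y_k=\xi'_k\delta^{b_k}$. Rewrite \eqref{erone} as
$$
x_1-x_2=y_2-y_1.
$$
If $y_1=y_2$ then $x_1=x_2$, which is excluded, so both sides are non-zero. Put $a:=\min(a_1,a_2)$ and $b:=\min(b_1,b_2)$. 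Then
$$
\gamma^{a}\bigl(\xi_1\gamma^{a_1-a}-\xi_2\gamma^{a_2-a}\bigr)=\delta^{b}\bigl(\xi'_2\delta^{b_2-b}-\xi'_1\delta^{b_1-b}\bigr),
$$
and each bracket has the shape $\pm\xi(\gamma^{n}-\eta)$ with $n=|a_1-a_2|\ge0$ and $\eta$ a root of unity; similarly $\delta^{m}-\eta'$ with $m=|b_1-b_2|$. The goal is to bound $n$, $m$, $|a|$ and $|b|$ effectively.

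First I would bound $n$ and $m$. Choose $K$ containing everything and a finite $S$ with $\gamma,\delta\in\OO_S^\times$, and compare the two sides at places $v\notin S$. At such places $\gamma^a$ and $\delta^b$ are units, so $\gamma^n-\eta$ and $\delta^m-\eta'$ have the same valuation. By Corollary~\ref{cobaker}, the quantities $\height_v(\gamma^n-\eta)$ are small for $v\in S$, namely $\ll\log^\ast n$. Taking the sum over all $v$, this gives
$$
\height(\gamma^n-\eta)=\sum_{v\notin S}\height_v(\gamma^n-\eta)+O(\log^\ast n),
$$
where the first sum is the ``outside-$S$'' part. The same holds for $\delta^m-\eta'$. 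Hence the outside-$S$ parts agree, and so
$$
n\,\height(\gamma)=m\,\height(\delta)+O(\log^\ast n+\log^\ast m).
$$
This alone does not bound $n$ and $m$. One further uses that $\gamma^a\delta^{-b}$ equals the ratio of the brackets. Its height is $\ll n+m$, while by Corollary~\ref{coadis} it is $\gg |a|+|b|$. So $|a|$ and $|b|$ are bounded linearly by $n+m$.

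The main obstacle is then to bound $n$ (equivalently $m$). I would work at a place $v\in S$ where $|\gamma|_v\ne1$; such a place exists since $\gamma$ is not a root of unity. Say $|\gamma|_v>1$ after possibly inverting, and consider the equation as a linear form in logarithms at $v$. If $|\delta|_v=1$, then the right-hand side has absolute value $|\delta|_v^{b}\,|\delta^m-\eta'|_v$, which is at most $2$ (archimedean case), while the left-hand side has absolute value roughly $|\gamma|_v^{a+n}$. This forces $a+n\ll 1$, and hence, combined with the height relation above, everything is bounded. If $|\delta|_v\ne1$ as well, then comparing the sizes of the dominant terms gives a relation
$$
(a+n)\log|\gamma|_v=b'\log|\delta|_v+O(1),
$$
with $b'\in\{b,b+m\}$. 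Using places where $\gamma$ and $\delta$ have independent valuation vectors (they exist by multiplicative independence together with Dirichlet's unit theorem argument), these relations over all $v\in S$ force near-proportionality of $(a,a+n)$ with $(b,b+m)$. This is incompatible with independence unless both sides are bounded, with the error controlled by Baker's inequality (Theorem~\ref{thbaker}), which gives $O(\log^\ast(n+m))$ error terms. A final comparison of the linear main terms against the logarithmic errors bounds $n+m$ effectively, and hence all the heights.
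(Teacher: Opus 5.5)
Your first step is correct. At $v\notin S$ the two brackets have the same absolute value, Corollary~\ref{cobaker} controls them on $S$, and the bound $|a|+|b|\ll n+m+1$ from Corollary~\ref{coadis} applied to $\gamma^a\delta^{-b}$ is valid.

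The gap is in the step meant to bound $n$. At a place $v\in S$ you only use $|x_1-x_2|_v=|y_2-y_1|_v$, with the dominant terms read off up to $O(1)$. Where $|\gamma|_v>1$ this involves $a+n=\max(a_1,a_2)$, where $|\gamma|_v<1$ it involves $a=\min(a_1,a_2)$, and likewise for $\delta$. Nothing forces $(a,b)$ to be close to $(a+n,b+m)$, so independence of the valuation vectors of $\gamma,\delta$ gives no contradiction.

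Concretely, take $K=\Q$, $\gamma=6$, $\delta=12$, $S=\{\infty,2,3\}$; the groups $\langle -1,6\rangle$ and $\langle -1,12\rangle$ are almost disjoint of rank~$1$.
\begin{itemize}
\item The places $2$ and $3$ give $a=2b+O(1)$ and $a=b+O(1)$, hence $a,b=O(1)$.
\item The only remaining relation is $(a+n)\log 6=(b+m)\log 12+O(1)$ at $\infty$. It has infinitely many solutions with $n,m\to\infty$, and your relation $n\height(\gamma)=m\height(\delta)+O(\log^\ast n+\log^\ast m)$ says nothing more.
\item No place of $S$ has $|\gamma|_v=1$ or $|\delta|_v=1$. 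In your scheme Baker only enters there, through $\gamma^n\eta^{-1}$ and $\delta^m\eta'^{-1}$, i.e.\ within a single group, so it adds nothing.
\end{itemize}
So the closing ``comparison of linear main terms against logarithmic errors'' has nothing to compare. Separately, in your case $|\delta|_v=1$, the bound $a+n\ll1$ is only an upper bound for $\max(a_1,a_2)$. It does not bound $n$, since $a$ may tend to $-\infty$.

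The missing idea is to apply Baker to a quotient of elements from the \emph{two different} groups, i.e.\ to exploit the grouping $x_1+y_1=x_2+y_2$. This is what the paper does.
\begin{enumerate}
\item Dispose of $a_1=a_2$ and $b_1=b_2$ via Theorem~\ref{thunits}.
\item Orient $\gamma$ and renumber so that $a_1=\max(|a_1|,|a_2|)>a_2$, and only then choose $v$ with $|\gamma|_v>1$. This also repairs your case $|\delta|_v=1$.
\item Arrange $|\delta|_v>1$ and $b_1>b_2$. Corollary~\ref{cobaker} applied to $-y_1/x_1$ gives $|x_1+y_1|_v\ge|x_1|_ve^{-C\log^\ast a_1}$. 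Comparing with $|x_2+y_2|_v\le2\max\{|x_2|_v,|y_2|_v\}$ yields $n\ll\log^\ast a_1$ or $m\ll\log^\ast a_1$.
\item Apply Theorem~\ref{thbaker} again, to $y_2/y_1=1-(x_1/y_1)(x_2/x_1-1)$, or symmetrically to $x_2/x_1$. This is a linear form in $\gamma$, $\delta$ and $x_2/x_1-1$, the last of height $O(\log^\ast a_1)$. It gives $n,m\ll(\log^\ast a_1)^2$.
\end{enumerate}
Once you have this, your first step finishes at once: $a_1\le|a|+n\ll n+m+1\ll(\log^\ast a_1)^2$. That is shorter than the paper's ending, which analyses all other places $w$ with $|\gamma|_w\ne1$ and invokes Lemma~\ref{lerhow}.
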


This theorem will be proved in Subsection~\ref{ssproofrankone}  using the following  simple lemma.

\begin{lemma}
\label{lerhow}
Let~$K$ be a number field and ${\alpha, \beta\in K^\times}$. Assume that for every ${v\in M_K}$ we have 
\begin{equation}
\label{ebothneone}
|\alpha|_v \ne 1 \Leftrightarrow |\beta|_v\ne 1. 
\end{equation}
Assume further that all the quotients 
\begin{equation}
\label{equotients}
\frac{\log |\beta|_v}{\log|\alpha|_v} \qquad (|\alpha|_v \ne 1)
\end{equation}
are equal. Then~$\alpha$ and~$\beta$ are multiplicatively dependent. 
\end{lemma}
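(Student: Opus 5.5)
The plan is to show that $\alpha^a\beta^{-b}$ is a root of unity for suitable integers $(a,b)\ne(0,0)$. Let $S$ be the set of $v\in M_K$ with $|\alpha|_v\ne1$; by~\eqref{ebothneone} it is also the set of $v$ with $|\beta|_v\ne1$.

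First suppose $S$ is empty. Then $|\alpha|_v=1$ for every $v\in M_K$, archimedean or not. By Kronecker's theorem (equivalently, $\height(\alpha)=0$ by~\eqref{edefhe}), $\alpha$ is a root of unity, and so is $\beta$. Hence they are multiplicatively dependent and we are done.

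Now suppose $S$ is non-empty, and let $\lambda$ be the common value of the quotients~\eqref{equotients}, so that $\log|\beta|_v=\lambda\log|\alpha|_v$ for $v\in S$. The key point is that $\lambda$ is rational.

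If $S$ contains a finite prime $v$, then $\log|\alpha|_v$ and $\log|\beta|_v$ are non-zero integer multiples of $e_v^{-1}\log p$. So $\lambda=\operatorname{ord}_v(\beta)/\operatorname{ord}_v(\alpha)\in\Q$.

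If $S$ consists only of archimedean primes, I would not use the archimedean logarithms directly, since their ratio need not a priori be rational. Instead I would use the product formula. Since $|\alpha|_v=1$ outside $S$, we have $\sum_{v\in S}d_v\log|\alpha|_v=0$, and the same holds for $\beta$. This alone does not give rationality, so I would argue differently. Both $\alpha$ and $\beta$ are $S$-units, with $S$ archimedean, so they lie in the unit group $\OO_K^\times$. The logarithmic embedding $u\mapsto(\log|u|_v)_{v\mid\infty}$ maps $\OO_K^\times$ onto a lattice with kernel the roots of unity. The vector of $\beta$ equals $\lambda$ times the vector of $\alpha$, and the vector of $\alpha$ is non-zero. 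A non-zero lattice vector $\ell(\alpha)$ and $\lambda\ell(\alpha)$, also in the lattice, force $\lambda\in\Q$: the lattice meets the line $\R\ell(\alpha)$ in a discrete subgroup, hence in a cyclic group, so $\lambda$ is rational. The same discreteness argument could also handle the general case uniformly if one works with the image of the $S$-unit group $\OO_S^\times$ in $\R^S$, which is again a lattice by Dirichlet's $S$-unit theorem. I would present that uniform version.

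With $\lambda=b/a$ for integers $a\ne0$ and $b$, set $\gamma:=\alpha^b\beta^{-a}$. Then $\log|\gamma|_v=b\log|\alpha|_v-a\log|\beta|_v=0$ for $v\in S$, and $|\gamma|_v=1$ trivially for $v\notin S$. So $\height(\gamma)=0$, and $\gamma$ is a root of unity by Kronecker. Raising $\gamma$ to its order gives a non-trivial multiplicative relation between $\alpha$ and $\beta$, with exponents not both zero because $a\ne0$.

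The main obstacle is the rationality of $\lambda$ in the purely archimedean situation. I expect to handle it by the discreteness of the log-embedding of $\OO_S^\times$, used as a black box.
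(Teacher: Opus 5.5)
Your proof is correct, but it is organized differently from the paper's. The paper writes $\alpha=\xi\eta_1^{m_1}\cdots\eta_{s-1}^{m_{s-1}}$ and $\beta=\theta\eta_1^{n_1}\cdots\eta_{s-1}^{n_{s-1}}$ in a basis of the free part of the $S$-unit group. It observes that the hypothesis makes $E\um$ and $E\un$ colinear, where $E=[\log|\eta_j|_{v_i}]$ is the regulator-type matrix, and uses nonsingularity of $E$ to conclude that the integer vectors $\um,\un$ are colinear. Rationality of the ratio is then automatic, and torsion is absorbed by the structure theorem for $S$-units.

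You never choose a basis. You prove directly that the common ratio $\lambda$ is rational: via $\lambda=\mathrm{ord}_v(\beta)/\mathrm{ord}_v(\alpha)$ at a finite place, or via discreteness of the logarithmic image of the units on the line $\mathbb{R}\ell(\alpha)$. Kronecker's theorem then shows that $\alpha^b\beta^{-a}$ is a root of unity.

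Your route uses only discreteness of the log-embedding, not the exact rank in Dirichlet's theorem, and the case with a finite place is completely elementary. It also avoids a point the paper passes over quickly: the set of $v$ with $|\alpha|_v\ne1$ need not contain all archimedean places. So the rank $s-1$ and the squareness of $E$ need a small adjustment, although only injectivity of $E$ is actually used. The paper's route is shorter once the $S$-unit theorem is granted, and it needs no case split.

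One small remark on your uniform version: to invoke Dirichlet's $S$-unit theorem literally, enlarge $S$ by the archimedean places. This is harmless, since $\log|\alpha|_v=\log|\beta|_v=0$ at the added places, so $\ell(\beta)=\lambda\ell(\alpha)$ still holds in the larger space.
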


\begin{proof}
Let $S=\{v_1, \ldots, v_s\}$ be the subset of~$M_K$ consisting of~$v$ with ${|\alpha|_v \ne 1}$.  Then~$\alpha$ is an $S$-unit, and so is~$\beta$, by~\eqref{ebothneone}. 
Let ${\eta_1, \ldots, \eta_{s-1}}$ be a basis of a maximal infinite subgroup of the group of~$S$-units. Then 
$$
\alpha=\xi\eta_1^{m_1}\cdots \eta_{s_1}^{m_{s-1}}, \qquad \alpha=\theta\eta_1^{n_1}\cdots \eta_{s_1}^{n_{s-1}}, 
$$
where ${m_1, \ldots, m_{s-1}, n_1, \ldots, n_{s-1} \in \Z}$ and ${\xi, \theta}$ are roots of unity. We denote by~$\um$
and~$\un$ the vertical vectors
${[m_1, \ldots, m_{s-1}] }$  and ${[n_1, \ldots, n_{s-1}] }$, respectively. To prove the lemma, we have to show that these vectors are colinear: there exists ${\lambda \ne 0}$ such that ${\un=\lambda \um}$. 

Denote by~$E$ the matrix ${[\log|\eta_j|_{v_i}]_{1\le i,j\le s-1}}$.  Equality of the quotients~\eqref{equotients} means that the vectors ${E\um}$ and ${E\un}$ are colinear. Since~$E$ is a nonsingular matrix, this implies that~$\um$
and~$\un$ are colinear as well. The lemma is proved. 
\end{proof}

\subsection{Proof of Theorem~\ref{thrankone}}
\label{ssproofrankone}

To start with, let us note that it suffices to bound the heights of any two of the four variables $x_1,x_2,y_1,y_2$. Indeed, if, say, $\height(x_1)$ and $\height(y_2)$ are bounded, then, applying Theorem~\ref{thunits} to the equation ${x_2-y_1=y_2-x_1}$, we bound the heights of~$x_2$ and~$y_1$ as well. 

Let~$\gamma$ and~$\delta$ generate the maximal infinite subgroups of~$\Gamma$ and~$\Delta$, respectively. We will use notation 
\begin{equation}
\label{erhov}
\rho_v :=\frac{\log|\delta|_v}{\log|\gamma|_v} \qquad (v \in M_K, \quad |\gamma|_v\ne 1). 
\end{equation}

The proof is rather long, so we split it into several steps.

\subsubsection{Exponents}
We have 
$$
x_i=\xi_i\gamma^{m_i}, \quad y_i =\theta_i\delta^{n_i} \qquad (i=1,2), 
$$  
where ${m_1,m_2,n_1,n_2\in \Z}$ and ${\xi_1,\xi_2,\theta_1, \theta_2}$ are roots of unity.  In the sequel,  the integers ${m_1,m_2,n_1,n_2}$
will be called \textit{exponents}. Note that 
$$
\height (x_i)= |m_i|\height(\gamma), \qquad \height (y_i)= |n_i|\height(\delta). 
$$
Hence, to prove the theorem, we have to bound the absolute values of any two of the four exponents.

We may assume, replacing, if necessary,~$\gamma$ by~$\gamma^{-1}$, and renumbering the variables,  that ${m_1>0}$ and that ${m_1\ge |m_2|}$. In addition to this, we may assume that ${m_1\ne m_2}$. Indeed, if ${m_1=m_2}$ then ${\xi_1\ne \xi_2}$: otherwise we violate the hypothesis ${x_1\ne x_2}$. We obtain ${x_2=\xi x_1}$ with a root of unity ${\xi\ne 1}$ belonging to the maximal finite subgroup of the group $\Gamma\Delta$. Our equation now becomes ${y_1x_1^{-1}- y_2x_1^{-1}= \xi-1}$. For every given~$\xi$, the solutions of this equation are effectively bounded by Theorem~\ref{thunits}, which bounds the  initial variables~$x_i$ and~$y_i$ by Corollary~\ref{coadis}. 
Similarly, we may assume that ${n_1\ne n_2}$. 

We may also assume that ${m_1\ge 3}$ (this is just for convenience, to be allowed to write $\log m_1$ where $\log^\ast m_1$ is expected). Indeed, otherwise both ${|m_1|=m_1}$ and ${|m_2|\le m_1}$ are bounded and we are done. 

Let us summarize what  we know  so far about the exponents $m_i$ and $n_i$:
\begin{equation}
\label{ewhatweknow}
m_1\ge 3, \quad m_1\ge |m_2|, \quad m_1>m_2, \qquad  n_1\ne n_2. 
\end{equation}

\subsubsection{Study of a fixed~$v$ with ${|\gamma|_v >1}$}
Let~$K$ be a number field containing both groups~$\Gamma$  and~$\Delta$. 
Since~$\gamma$ is not a root of unity, there exists ${v \in M_K}$ such that ${|\gamma|_v >1}$.   Since ${m_1>m_2}$, we have the following: 
$$
|x_1-x_2|_v= |x_1|_v 
$$
if~$v$ is a finite absolute value, and 
$$
(1-|\gamma|_v^{-1}) \le  \frac{|x_1-x_2|_v}{|x_1|_v} <2, 
$$
if~$v$ is infinite.
In any case,  
\begin{equation}
\label{exoxtxo}
\log|x_1-x_2|_v = \log|x_1|_v +O(1), 
\end{equation}
where the implied constant depends on~$\gamma$.  In general, in the rest of the proof the implied constants and the constants denoted by $C,C_1,C_2,\ldots$ effectively depend on~$\gamma$ and~$\delta$.

If ${|\delta|_v =1}$ then ${|y_1-y_2|_v\le 2}$ (and even $\le 1$ if~$v$ is finite). Since 
\begin{equation}
\label{eyetanother}
|y_1-y_2|_v =|x_1-x_2|_v, 
\end{equation}
equation~\eqref{exoxtxo} implies that $ \log|x_1|_v $ is bounded from above.  Hence~$m_1$ is bounded from above. From~\eqref{ewhatweknow}, this bounds ${|m_1|=m_1}$ and ${|m_2|\le m_1}$, hereby completing the proof.  

Now assume that ${|\delta|_v \ne 1}$. Replacing (if needed)~$\delta$ by $\delta^{-1}$ and $n_1,n_2$ by $-n_1, -n_2$, we may assume that ${|\delta|_v>1}$. If both ${n_1,n_2\le 0}$ then ${|y_1-y_2|_v\le 2}$ and we complete the proof as in the previous paragraph. Now assume that one of $n_1,n_2$ is strictly positive. We may rewrite~\eqref{erone}  as 
${x_1+(-y_2)=x_2+(-y_1)}$
Hence, replacing $y_1,y_2$ by $-y_2,-y_1$, and recalling that  ${n_1\ne n_2}$ by~\eqref{ewhatweknow}, we may assume that 
\begin{equation}
n_1>0, \qquad n_1>n_2. 
\end{equation}
Arguing as before, we obtain for $y_1,y_2$ a relation analogous to~\eqref{exoxtxo}:
$$
\log|y_1-y_2|_v = \log|y_1|_v +O(1). 
$$
Together with~\eqref{exoxtxo} and~\eqref{eyetanother}, this implies that 
\begin{equation*}
\log|y_1|_v =\log|x_1|_v+O(1);  
\end{equation*}
or, in terms of the exponents,
\begin{equation}
\label{enonemone}
n_1 =\rho_vm_1+O(1),  
\end{equation}
where~$\rho_v$ is defined in~\eqref{erhov}.

\subsubsection{Using Baker I}
Our next step is to estimate ${|x_1+y_1|_v}$ from below and from above. We have 
$$
\height(x_1^{-1}y_1) \ll m_1+n_1\ll m_1. 
$$
Corollary~\ref{coadis} implies that 
$$
|x_1+y_1|_v =|x_1|_v |-x_1^{-1}y_1-1|_v \ge  |x_1|_v e^{-C_1\log^\ast\height(-x_1^{-1}y_1)} \gg |x_1|_v e^{-C_1\log m_1} , 
$$
and, similarly, ${|x_1+y_1|_v \gg |y_1|_v e^{-C_1\log m_1}}$. 
On the other hand, 
$$
|x_1+y_1|_v =|x_2+y_2|_v \le 2\max\{|x_2|_v, |y_2|_v\}. 
$$
Hence one of the following holds:
\begin{equation*}
\log|x_2|_v \ge \log|x_1|_v -O(\log m_1) \quad \text{or} \quad \log|y_2|_v \ge \log|y_1|_v -O(\log  m_1). 
\end{equation*}
Or, in terms of the exponents, 
\begin{align}
\label{emm}
m_2 &\ge m_1  -O(\log  m_1) \quad \text{or} \\
\label{enn} 
n_2 &\ge n_1  -O(\log  m_1) . 
\end{align}

\subsubsection{Using Baker II}
Assume~\eqref{emm}. We want to show that a weaker version of~\eqref{enn} holds as well. 
We are going to use Baker to estimate from below the quantity 
$$
\left|\frac{y_2}{y_1}\right|_v = e^{(n_2-n_1)\log |\delta|_v}. 
$$
We have
$$
-\frac{y_2}{y_1} = \frac{x_2-x_1-1}{ y_1} =\xi \delta^{-n_1}\gamma^{m_1} z -1,  
$$
where
$$
z:= \frac{x_2}{x_1}-1= \theta \gamma^{m_2-m_1} -1 
$$
and~$\xi$ and~$\theta$  are roots of unity. By~\eqref{emm},  
$$
\height(z)  \ll m_1-m_2 \ll \log  m_1
$$
Since ${n_1\ll m_1}$ by~\eqref{enonemone}, Theorem~\ref{thbaker} implies that 
$$
\left|\frac{y_2}{y_1}\right|_v = |\xi \delta^{-n_1}\gamma^{m_1} z -1|_v \ge e^{-C_2(\log m_1)^2}. 
$$
Hence 
${n_2 \ge n_1 -O(\log m_1)^2}$.

Similarly, if~\eqref{enn} holds, then ${m_2 \ge m_1 -O(\log m_1)^2}$. Thus, in any case we have
$$
m_2 \ge m_1  -O(\log  m_1)^2, \qquad  
n_2 \ge n_1  -O(\log  m_1)^2 . 
$$
Since ${m_1>m_2}$ and ${n_1>n_2}$, we have just proved that all the four exponents are of the same order of magnitude:
\begin{align}
\label{emagnitudem}
m_2 &= m_1 + O(\log m_1)^2, \\ 
\label{emagnituden}
n_1, n_2 &= \rho_v m_1 + O(\log m_1)^2. 
\end{align}
In particular, to complete the proof, it suffices to bound~$m_1$. 

Note also that we may assume that all of the exponents are positive: otherwise  ${m_1=O(\log m_1)^2}$, which bounds~$m_1$. Thus,
\begin{equation}
m_1>m_2>0, \qquad n_1>n_2>0. 
\end{equation} 

\subsubsection{A different absolute value}
We obtained  strong asymptotic formulas~\eqref{emagnitudem} and~\eqref{emagnituden} by studying just a single fixed ${v\in M_K}$ with the property ${|\gamma|_v>1}$ and ${|\delta|_v>1}$. 
Now let  ${w\in M_K}$ be a different absolute value. Assume  that ${|\gamma|_w >1}$. Then, arguing as above, we obtain the analogue of~\eqref{exoxtxo}:
\begin{equation}
\label{exow}
\log|x_1-x_2|_w = \log|x_1|_w +O(1)= m\log|\gamma|_w+O(1).   
\end{equation}
If ${|\delta|_w \le 1}$ then ${|x_1-x_2|_w=|y_1-y_2|_w\le 1}$,  because ${n_1,n_2>0}$; this, together with~\eqref{exow}, bounds~$m_1$. Hence we may assume that ${|\delta|_w>1}$. Similarly,  ${|\delta|_w>1}$ implies that ${|\gamma|_w>1}$. Thus
\begin{equation}
\label{egeone}
|\gamma|_w >1 \Leftrightarrow |\delta|_w>1 \qquad (w \in M_K). 
\end{equation}
Moreover, in the case ${|\gamma|_w>1}$ the same argument as above shows that asymptotic relations~\eqref{emagnituden} hold for~$w$ as well: 
\begin{equation}
\label{emnrhow}
n_1, n_2 = \rho_w m_1 + O(\log m_1)^2 \qquad (w \in M_K, \quad |\gamma|_w>1).  
\end{equation}

Now assume that ${|\gamma|_w <1}$. Then we cannot have ${|\delta|_w>1}$ by~\eqref{egeone}. Assume that ${|\delta|_w=1}$. Since ${m_1>m_2>0}$, we have 
\begin{equation}
\label{enegw}
\log|x_1-x_2|_w =   m_2\log|\gamma|_w+O(1)= m_1\log|\gamma|_w+O(\log m_1)^2.   
\end{equation}
On the other hand, ${|y_1|_w=|y_2|_w=1}$, and we obtain 
\begin{equation}
\label{edelwone}
\log|x_1-x_2|_w = \log|y_1-y_2|_w= \log\left|\frac{y_1}{y_2}-1\right|_w. 
\end{equation}
We have ${\height(y_1/y_2-1) \le (n_1-n_2)\height(\delta) +\log2 \ll (\log m_1)^2}$. 
By Liouville's inequality~\eqref{eliouv}, 
$$
\left|\frac{y_1}{y_2}-1\right|_w \ge e^{-(d/d_w) \height(y_1/y_2)} \ge e^{-C_3(\log m_1)^2}. 
$$
Combining this with~\eqref{enegw} and~\eqref{edelwone} and noting that ${\log|\gamma|_w<0}$, we obtain ${m_1 \ll (\log m_1)^2}$, which bounds~$m_1$. 

Thus, we may may assume that ${|\delta|_w<1}$ when ${|\gamma|_w<1}$. Similarly, ${|\delta|_w<1}$ implies ${|\gamma|_w<1}$. Thus, 
\begin{equation}
\label{eleone}
|\gamma|_w <1 \Leftrightarrow |\delta|_w<1 \qquad (w \in M_K). 
\end{equation}
Furthermore, we have an analogue of~\eqref{enegw}:
\begin{equation}
\log|y_1-y_2|_w = n_1\log|\delta|_w +O(\log m_1)^2.   
\end{equation}
By~\eqref{enegw}, this implies that ${n_1=\rho_w m_1+O(\log m_1)^2}$ when ${|\gamma|_w <1}$. Together with~\eqref{emnrhow}, we obtain  that 
\begin{equation}
\label{eallw}
n_1=\rho_w m_1+O(\log m_1)^2 
\end{equation}
holds for all~$w$ satisfying ${|\gamma|_w \ne 1}$.

\subsubsection{Completing the proof}
Now we are ready to complete the proof. 
Since groups~$\Gamma$ and~$\Delta$ are almost disjoint,~$\gamma$ and~$\delta$ must be algebraically independent. 
Lemma~\ref{lerhow}, together with~\eqref{egeone} and~\eqref{eleone}, implies that the numbers~$\rho_w$ are not all equal: there exist ${w,w'\in  M_K}$ with ${|\gamma|_w, |\gamma|_{w'}\ne 1}$  such that ${\rho_{w}\ne \rho_{w'}}$. 
Denote ${\eps:=|\rho_{w}- \rho_{w'}|}$. Then~\eqref{eallw} and its analog with~$w'$ imply  that 
${\eps m_1 \ll (\log m_1)^2}$. This bounds~$m_1$. Theorem~\ref{thrankone} is proved. \qed

\section{Proof of Theorem~\ref{thmtwo}}

\label{sproofmtwo}

Now we are ready to prove Theorem~\ref{thmtwo}. Let~$K$ be a number field containing~$\Gamma$ and~$\Delta$. 

Since ${x_1+y_1}$ and ${x_2+y_2}$ are multiplicatively dependent, 
we have 
$$
x_1+y_1= \xi_1 z^{n_1}, \qquad x_2+y_2= \xi_2 z^{n_2}, 
$$
where~$n_1$ and~$n_2$ are  non-zero integers,~$\xi_1$ and~$\xi_2$ are roots  of unity and  ${z \in K^\times}$. If~$z$ is a root of unity then we have only finitely possibilities for~$x_k$ and~$y_k$ by Theorem~\ref{thmone}. From now on,~$z$ is not a root of unity.  Without loss of generality ${n_1>0}$ and ${n_1\ge |n_2|}$. 

If ${n_1\ge2}$ then the heights of~$x_1$ and~$y_1$ are bounded by Theorem~\ref{thpp}.  Hence the height of ${z^{n_1}=(x_1+y_1)\xi_1^{-1}}$ is bounded. Since ${|n_2|\le n_1}$, this also bounds the height of $z^{n_2}$. Now Theorem~\ref{thunits} bounds the heights of~$x_2$ and~$y_2$.

If ${n_1=1}$ then ${n_2=1}$ or~$-1$. When ${n_1=n_2=1}$,  we obtain 
$$
x_1+y_1 =\xi x_2+\xi y_2
$$
for some root of unity ${\xi\in K}$.  We may assume, without loss of generality, that~$\Delta$ contains all the roots of unity from~$K$; in particular, ${\xi \in \Delta}$. Proposition~\ref{proo} implies now that, with finitely many exceptions, ${x_1= \xi x_2}$ and ${y_1=\xi y_2}$. 

Now assume that ${n_1=1}$ and ${n_2=-1}$. Then we have
\begin{equation}
\label{eprod}
(x_1+y_1)(x_2+y_2)=\xi. 
\end{equation}
with some root of unity ${\xi\in K}$. Let~$S$ be a finite subset~$M_K$ such that  $\OO_S^\times$ contains both~$\Gamma$ and~$\Delta$. Then both ${x_1+y_1}$ and ${x_2+y_2}$ are $S$-integers, and~\eqref{eprod} implies that both are $S$-units:
\begin{equation}
\label{eredtounit}
x_i+y_i=:z_i \in \OO_S^\times \qquad (i=1,2). 
\end{equation}
Applying Theorem~\ref{thunits} with ${\Lambda=\OO_S^\times}$ to the equation ${x_iy_i^{-1}-z_iy_i^{-1}=-1}$, we bound the height of $x_iy_i^{-1}$. By Corollary~\ref{coadis}, this bounds the heights of~$x_i$ and~$y_i$. 
This completes the proof of Theorem~\ref{thmtwo}.  \qed

\begin{remark}
\label{reff}
All steps of this proof are effective, except using Proposition~\ref{proo} in the case ${n_1=n_2=1}$. This makes Theorem~\ref{thmtwo} non-effective in general. However, Proposition~\ref{proo} is effective in the special case when both groups~$\Gamma$ and~$\Delta$ are of rank~$1$, see Theorem~\ref{thrankone}. Hence Theorem~\ref{thmtwo} is effective in this case. 
\end{remark}

\section{Multiplicative dependence modulo a group}
\label{smultdepen}

Let~$\Lambda$ be subgroup of $\bar\Q^\times$. We say that ${x_1, \ldots, x_m\in \bar\Q^\times}$ are \textit{multiplicatively dependent modulo~$\Lambda$} if there exist integers ${a_1, \ldots, a_m}$, not all~$0$, such that 
$$
x_1^{a_1}\cdots x_m^{a_m}\in \Lambda. 
$$
In particular, for ${m=1}$, a singleton~$x$ is multiplicatively dependent modulo~$\Lambda$ if ${x\in \sqrt\Lambda}$, where
$$
\sqrt\Lambda:=\{\alpha \in \bar\Q^\times: \alpha^n \in \Lambda \ \text{for some positive integer~$n$}\}
$$  
is usually called the \textit{radical} of~$\Lambda$. 

Now let~$\Lambda$ be a finitely generated group. We are interested in the following question: for a given~$m$, determine 
the $m$-tuples ${(x_1, \ldots, x_m)\in \Lambda^m}$ such that ${x_1-1, \ldots, x_m-1}$ are multiplicatively dependent modulo ~$\Lambda$. The question is interesting on its own, and is closely related to Conjecture~\ref{comain}.  We state the corresponding conjecture in Subsection~\ref{ssconj}.  
However, in  this section we  are mainly concerned the cases ${m\le 3}$. The corresponding results will be used in Section~\ref{sproofmthree} to prove Theorem~\ref{thmthree}.

For ${m=1}$ it is easy.
\begin{theorem}
\label{thmodgone}
Let~$\Lambda$ be a finitely generated subgroup of $\bar\Q^\times$. Then 
there exist at most finitely many ${x\in \Lambda}$ such that ${x-1\in \sqrt\Lambda}$, and the heights of such~$x$ are effectively bounded in terms of~$\Lambda$.
\end{theorem}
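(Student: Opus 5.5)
Theorem~\ref{thmodgone} reduces to the results of Section~\ref{spowers}. Let ${x\in\Lambda}$ with ${x-1\in\sqrt\Lambda}$, so ${(x-1)^n\in\Lambda}$ for some positive integer~$n$. First I fix a number field~$K$ containing~$\Lambda$ and the roots of unity that will be needed. Since ${x\in\Lambda\subset K}$, we also have ${x-1\in K}$. Let~$S$ be a finite set of primes of~$K$ with ${\Lambda\le\OO_S^\times}$. Then ${(x-1)^n\in\OO_S^\times}$, so ${|x-1|_v=1}$ for every ${v\notin S}$, and hence ${x-1\in\OO_S^\times}$.

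The key observation is that ${\OO_S^\times}$ is itself a finitely generated subgroup of~$\bar\Q^\times$, containing~$\Lambda$. So I apply Theorem~\ref{thunits} with the group ${\OO_S^\times}$ in place of~$\Lambda$ and ${\beta=1}$, to the equation
$$
x+(-(x-1))=1, \qquad x,\ -(x-1)\in \OO_S^\times.
$$
This gives finitely many solutions, with heights effectively bounded in terms of~$S$. Since~$S$, and hence the group ${\OO_S^\times}$, is determined effectively by generators of~$\Lambda$, the bound is effective in terms of~$\Lambda$.

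Equivalently, Corollary~\ref{corat} applies with ${F(T)=T-1}$ and the enlarged group ${\OO_S^\times}$, since~$F$ has the root ${1\in\bar\Q^\times}$.

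There is no real obstacle. The only point to check is that passing from~$\Lambda$ to the enlarged group ${\OO_S^\times}$ (which plays the role of the group containing~$\sqrt\Lambda\cap K$) keeps everything finitely generated and effective. This holds because ${\OO_S^\times}$ is finitely generated by Dirichlet's $S$-unit theorem, with effectively computable generators.
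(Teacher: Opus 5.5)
Your proof is correct and is essentially the paper's argument. The paper observes that $x-1$ lies in the finitely generated group $\sqrt\Lambda\cap K^\times$ and applies Theorem~\ref{thunits}; you use the larger group $\OO_S^\times\supseteq\sqrt\Lambda\cap K^\times$ instead, which changes nothing. The one slip is your first sentence: the tools you invoke come from Section~\ref{sheights}, not Section~\ref{spowers}.
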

\begin{proof}
Let~$K$ be a number field containing~$\Lambda$. If ${x \in \Lambda}$ and ${x-1\in \sqrt\Lambda}$ then ${x-1}$ belongs to ${\sqrt\Lambda\cap K^\times}$, which is a finitely generated group. Hence $\height(x)$ is effectively bounded by Theorem~\ref{thunits}. 
\end{proof}

For ${m=2}$ there are infinitely many examples: for every ${x\in \Lambda}$, distinct from~$1$, the numbers ${x-1\ ,x-1}$ are multiplicatively dependent modulo~$\Lambda$, and so are ${x-1,\  x^{-1}-1}$. In the following theorem we show that these are the only examples, with finitely many exceptions. 

\begin{theorem}
\label{thmodgtwo}
Let~$\Lambda$ be a finitely generated subgroup of $\bar\Q^\times$.  Let ${x,y\in \Lambda}$ be distinct from~$1$  and satisfying
\begin{equation}
\label{enotinrad}
x-1, y-1\notin\sqrt\Lambda. 
\end{equation}
Assume that ${x-1}$ and ${y-1}$ are multiplicatively dependent modulo~$\Lambda$. 
Then, with finitely many exceptions, we have\footnote{Here and elsewhere   ${y= x^{\pm1}}$ means that ${y=x}$ or ${y=x^{-1}}$.}  ${x=y^{\pm1}}$. 
\end{theorem}

For ${m=3}$ there is a new infinite series of examples: given ${x\in \Lambda}$  distinct from~$1$, set
${y:= -x^{\pm1}}$  and ${z:= x^{\pm2}}$; then ${x-1, \ y-1, \ z-1}$ are multiplicatively dependent modulo~$\Lambda$. 
The following theorem shows that, conditionally to the weak $abc$-conjecture, these are the only examples, with finitely many exceptions. 

\begin{theorem}
\label{thmodgthree}
Let~$K$ be a number field and~$\Lambda$ a finitely generated subgroup of $K^\times$. Assume that the weak $abc$-conjecture (Conjecture~\ref{cowabc}) holds for the field~$K$. Let ${x,y,z\in \Lambda}$ be distinct  from~$1$. Assume that ${x-1, \ y-1, \ z-1}$ are multiplicatively dependent modulo~$\Lambda$, but any two of these numbers are multiplicatively independent modulo~$\Lambda$. 
Then, with finitely many exceptions, after  a permutation of $x,y,z$, we have ${y= -x^{\pm1}}$  and ${z= x^{\pm2}}$. 
\end{theorem}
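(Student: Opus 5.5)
The plan has three stages: turn the dependence into a relation between three polynomial-type expressions in the exponents, use weak $abc$ to force that relation to come from an identity in a few generators, and then classify the identities.

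\emph{Setup and reduction.} Fix $K$ and a finite $S\subset M_K$ with $\Lambda\le\OO_S^\times$. Enlarge $S$ so that $\OO_S$ is principal, and enlarge $\Lambda$ to $\OO_S^\times$; since $\sqrt{\Lambda}\cap K^\times$ is finitely generated, nothing is lost. Write the dependence as
$$
(x-1)^a(y-1)^b(z-1)^c\in\Lambda,\qquad a,b,c\ \text{non-zero coprime integers}.
$$
All three exponents are non-zero, because any two of the numbers are independent modulo~$\Lambda$.

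\emph{Step~1: a Schinzel--Tijdeman style bound on the exponents.} For each $v\notin S$ put $\alpha_v:=\log|x-1|_v$, and define $\beta_v,\gamma_v$ likewise for $y-1$ and $z-1$. The relation gives $a\alpha_v+b\beta_v+c\gamma_v=0$ for every $v\notin S$. Use weak $abc$ on $x$: since $x$ is an $S$-unit,
$$
\height(x)\ll\cond(x-1)+1.
$$
This says that the part of $x-1$ outside $S$ is essentially squarefree: the non-$S$ primes dividing $x-1$ to high order carry a bounded proportion of $\height(x-1)$. The same holds for $y$ and $z$.

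Now consider a prime $v\notin S$ dividing only one of $x-1,y-1,z-1$, say only $x-1$. The relation forces $a\alpha_v=0$, hence $\alpha_v=0$, a contradiction. So every $v\notin S$ dividing one of the numbers divides at least two of them.

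Next consider a prime dividing exactly two, say $x-1$ and $y-1$. Then $a\alpha_v=-b\beta_v$, so $|a|\,\alpha_v$ is divisible by $b$ in exponent terms. If $|a|$ or $|b|$ is large, the prime appears in $x-1$ or $y-1$ to a high power, contradicting the near-squarefreeness. This bounds $\max(|a|,|b|,|c|)$ unless heights are bounded. So finitely many exponent triples remain, and I fix one.

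\emph{Step~2: common factors.} Let
$$
g_{xy}:=\gcds(x-1,y-1)
$$
be the common part outside~$S$ (well defined since $\OO_S$ is principal), and similarly $g_{xz},g_{yz}$. Step~1 shows that, up to bounded conductor losses,
$$
x-1\approx g_{xy}^{e_1}g_{xz}^{e_2},
$$
and similarly for $y-1$ and $z-1$, with bounded exponents.

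The key lemma: if $x,y\in\Lambda$ and $g_{xy}$ is large compared with $\height(x)+\height(y)$, then $x$ and $y$ are ``close to multiplicatively dependent'' at those primes. This follows from the fact that $x\equiv1$ and $y\equiv1$ modulo $g_{xy}$, so $x^iy^j\equiv1$ for all $i,j$. A pigeonhole argument over small $(i,j)$ then produces $u=x^iy^j\in\Lambda$ with $u-1$ divisible by $g_{xy}$ but $\height(u)$ small. This is a Corvaja--Zannier type gcd estimate, but here I only use it through $abc$ and Liouville/Baker (Theorem~\ref{thbaker}).

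Comparing heights, this should force $x^iy^j=1$ or else $g_{xy}$ is small. If all three common factors are small, then $x-1$ lies in $(K^\times)^n\Lambda$ up to a small factor, and Corollary~\ref{coratpow} / Theorem~\ref{thppz} bounds $x$. Hence we get multiplicative relations $x^iy^j\in\text{torsion}$ for the relevant pairs. So, after removing finitely many exceptions, $x,y,z$ are all powers of a common $t\in\Lambda$ times roots of unity, with bounded exponents.

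\emph{Step~3: the one-variable classification.} Now $x-1,y-1,z-1$ are fixed polynomials in $t$ (or in $t^{-1}$), namely $\xi t^k-1$ with bounded $k$ and finitely many $\xi$. The relation makes a fixed rational function
$$
F(T)=(\xi_1T^{k_1}-1)^a(\xi_2T^{k_2}-1)^b(\xi_3T^{k_3}-1)^c
$$
take values in $\Lambda$ at $T=t$. By Corollary~\ref{corat}, either $t$ has bounded height or $F$ is a monomial. A monomial $F$ means the cyclotomic-type factors cancel exactly. The resulting combinatorial problem is to find all $\xi T^k-1$ whose product with exponents $a,b,c$ is a monomial, with no two of the factors dependent. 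Its only solutions are
$$
(T-1)(-T-1)\sim T^2-1,
$$
together with the variants obtained from $T\mapsto T^{-1}$ (using $T^{-1}-1=-T^{-1}(T-1)$). This gives $y=-x^{\pm1}$ and $z=x^{\pm2}$.

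\emph{Main obstacle.} I expect Step~2 to be the main obstacle: converting large common gcds into an exact multiplicative relation between $x$ and $y$, using only weak $abc$ and Baker, without invoking the subspace theorem. What I would try first is applying weak $abc$ to the element $(x-1)/(y-1)$ and to $x/y$, combined with Theorem~\ref{thbaker} to control $|x^iy^j-1|_v$ at the primes of $S$.
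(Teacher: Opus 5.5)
Your Step~2 has a genuine gap, at exactly the point where the deep input is needed. What you call the key lemma is the Corvaja--Zannier bound $\lgcd(x-1,y-1)\le\eps\max\{\height(x),\height(y)\}+O_\eps(1)$ for multiplicatively independent $x,y\in\Lambda$ (Theorem~\ref{thcorza}). It is a consequence of the subspace theorem, and your pigeonhole sketch does not produce it. From $g_{xy}\mid x^iy^j-1$ you only get $d^{-1}\log\NN(g_{xy})\le\height(x^iy^j)+\log 2$. This helps only if some $x^iy^j\ne1$ with small $|i|,|j|$ has small height. Take, say, $x=\gamma_1^m$ and $y=\gamma_2^m$ with $\gamma_1,\gamma_2$ independent generators of $\Lambda$. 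Then Proposition~\ref{prdir} gives $\height(x^iy^j)\gg m\max\{|i|,|j|\}$, so no such element exists and you recover only the trivial bound. Nothing in your sketch derives the gcd estimate from weak $abc$ and Baker's inequality instead. So ``comparing heights'' does not force $x^iy^j=1$, and you never reach the situation where $x,y,z$ are powers of a single $t$.

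The same missing input undermines Step~1. Under weak $abc$, ``near-squarefree'' only means $\Sigma_x\ge\height(x)/\kappa-O(1)$. Your divisibility argument also constrains only primes dividing exactly two of the three numbers. Primes dividing all three may occur to first order: take $a=N$, $b=N+1$, $c=-(2N+1)$. For a relation $(x-1)^N(y-1)(z-1)^{-1}\in\Lambda$, every prime of $y-1$ not dividing $x-1$ divides $z-1$ to exactly the same order. So a large exponent forces no high powers in $y-1$ or $z-1$ unless $\lgcd(y-1,z-1)$ is already controlled, and the exponents are not bounded.

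Likewise, ``powers of a common $t$ with bounded exponents'' needs more than dependence. The paper first uses Proposition~\ref{prmultdep} and Schinzel's primitive divisors to get $z=x^n$ or $z=y^r$ (Proposition~\ref{prmultdepen}). It then uses Theorem~\ref{thcorza}, Proposition~\ref{prsigmas} and weak $abc$ to get $2\le|n|\le3\kappa$ (Proposition~\ref{prnorr}). Only then does it run the case analysis on the sign of $b+c$, using Proposition~\ref{prgcdsres}, Theorem~\ref{thppz}, Corollaries~\ref{corat} and~\ref{coratpow}, and the six-term $S$-unit equation.

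Your reduction to $\Lambda=\OO_S^\times$ is also not free, since pairwise independence modulo the larger group can fail; this is what Proposition~\ref{prreplacethree} (via Theorem~\ref{thmodgtwo}) handles. Your Step~3 is fine as a finishing argument once the reduction to bounded powers of one $t$ is actually established.
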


Theorem~\ref{thmodgtwo} is proved in Subsection~\ref{ssmodgtwo}. In Subsections~\ref{ssgcd} and~\ref{ssprim} we develop various tools needed for the proof of Theorem~\ref{thmodgthree}. This latter is proved in Subsection~\ref{ssproofmodg}.

\subsection{Proof of Theorem~\ref{thmodgtwo}}
\label{ssmodgtwo}

In this subsection we prove Theorem~\ref{thmodgtwo}. By the hypothesis, there exist ${a,b\in \Z}$, not both~$0$,   such that 
\begin{equation}
\label{exayb}
(x-1)^a(y-1)^b\in  \Lambda. 
\end{equation}
Moreover, both~$a$ and~$b$ are not~$0$; indeed, if, say, ${b=0}$ then ${x-1\in \sqrt\Lambda}$, contradicting~\eqref{enotinrad}. 

Our first observation is that bounding the height of one of the unknowns $x,y$ bounds also the other. More precisely, we have the following.

\begin{proposition}
\label{prhxhyb}
Let ${A>0}$. In the set-up of Theorem~\ref{thmodgtwo}, assume that ${\height(x) \le A}$. Then $\height(y)$ is bounded in terms of~$A$ and~$\Lambda$. 
\end{proposition}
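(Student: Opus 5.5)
The plan is to use that a bounded-height element of~$\Lambda$ ranges over a finite set. This lets us absorb ${x-1}$ into an enlarged finitely generated group, and then apply Theorem~\ref{thunits} to the unit equation ${y-(y-1)=1}$.

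Let~$K$ be a number field containing~$\Lambda$. Since ${x\in \Lambda\subset K}$ and ${\height(x)\le A}$, Northcott's theorem shows that~$x$ belongs to a finite set, effectively determined by~$A$ and~$\Lambda$. So it suffices to bound $\height(y)$ for each fixed such~$x$, with the bound depending on~$x$.

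Fix such an~$x$. By~\eqref{exayb} we have ${(x-1)^a(y-1)^b\in\Lambda}$, and, as noted right after~\eqref{exayb}, ${b\ne 0}$. Let ${\Lambda':=\langle \Lambda, x-1\rangle}$; this is legitimate because ${x\ne 1}$, so ${x-1\in K^\times}$. Then ${(y-1)^b\in \Lambda'}$, that is, ${y-1\in \sqrt{\Lambda'}\cap K^\times}$.

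The group ${\Lambda'':=\sqrt{\Lambda'}\cap K^\times}$ is finitely generated: it is contained in $\OO_S^\times$ for a finite ${S\subset M_K}$ such that ${\Lambda'\le\OO_S^\times}$. A generating set can be written down explicitly in terms of~$\Lambda'$ and~$K$. For instance, take~$S$ minimal with ${\Lambda'\le \OO_S^\times}$; then $\Lambda''$ has finite index in $\OO_S^\times$, since it is the preimage of the torsion part of ${\OO_S^\times/\Lambda'}$.

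Now both~$y$ and ${y-1}$ lie in the finitely generated group $\Lambda''$ (here ${y\in\Lambda\subset\Lambda''}$), and they satisfy ${y-(y-1)=1}$. Theorem~\ref{thunits}, applied with the group generated by $\Lambda''$ and~$-1$ and with ${\beta=1}$, bounds $\height(y)$ in terms of $\Lambda''$. That in turn is a bound in terms of~$x$ and~$\Lambda$. Taking the maximum over the finitely many admissible~$x$ gives a bound in terms of~$A$ and~$\Lambda$.

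The only point needing care is the finite generation, and effective description, of $\sqrt{\Lambda'}\cap K^\times$. This is the same fact already used in the proof of Theorem~\ref{thmodgone}, so I expect no real obstacle.
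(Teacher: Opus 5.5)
Your proof is correct and is essentially the paper's argument: enlarge $\Lambda$ so that it contains $x-1$, deduce $y-1\in\sqrt{\Lambda'}$, and conclude via the unit equation (the paper cites Theorem~\ref{thmodgone}, whose proof is exactly your last step). The only cosmetic difference is that the paper uses a single group $\Lambda'$ generated by $\Lambda$ and all $\alpha\in K^\times$ with $\height(\alpha)\le A+\log 2$, instead of fixing $x$ via Northcott and taking a maximum. There is one slip, in a non-essential aside: $\sqrt{\Lambda'}\cap K^\times$ need not have finite index in $\OO_S^\times$, even for minimal $S$ (e.g.\ $K=\Q$, $\Lambda'=\langle 6\rangle$). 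The torsion argument only shows that $\Lambda'$ has finite index in $\sqrt{\Lambda'}\cap K^\times$, but finite generation already follows from $\sqrt{\Lambda'}\cap K^\times\le\OO_S^\times$, so the proof is unaffected.
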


\begin{proof}
Let~$K$ be the number field generated by~$\Lambda$ and~$\Lambda'$  the multiplicative group, generated by~$\Lambda$ and by all  ${\alpha \in K^\times}$ satisfying ${\height(\alpha)\le A+\log 2}$. By Northcott's theorem, there exist  at most finitely many~$\alpha$ as above, which implies that~$\Lambda'$ is a finitely generated group. 

If ${\height(x) \le A}$ then ${x-1\in \Lambda'}$ by the definition of~$\Lambda'$. Hence ${y-1\in \sqrt{\Lambda'}}$ by~\eqref{exayb}. Theorem~\ref{thmodgone} now  bounds $\height(y)$ in terms of~$\Lambda'$, that is,  in terms of~$A$ and~$\Lambda$. 
\end{proof}

Our second observation is that one may replace~$\Lambda$ by a bigger group.

\begin{proposition}
\label{prreplace}
Let $\Lambda'$ be a finitely generated subgroup of $\bar\Q^\times$ containing~$\Lambda$. 
Assume that Theorem~\ref{thmodgtwo} holds with~$\Lambda'$ instead of~$\Lambda$. Then it holds with~$\Lambda$ as well. 
\end{proposition}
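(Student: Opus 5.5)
We take a pair $x,y\in\Lambda$ satisfying the hypotheses of Theorem~\ref{thmodgtwo} for~$\Lambda$, and apply Theorem~\ref{thmodgtwo} for~$\Lambda'$ to the same pair. We then transfer its conclusion back to~$\Lambda$. Since $\Lambda\le\Lambda'$, we have $x,y\in\Lambda'$ and both differ from~$1$. Multiplicative dependence modulo~$\Lambda$ implies multiplicative dependence modulo~$\Lambda'$: the relation $(x-1)^a(y-1)^b\in\Lambda\subseteq\Lambda'$ persists.

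The only hypothesis that does not transfer automatically is~\eqref{enotinrad}. We have $x-1,y-1\notin\sqrt\Lambda$, but we need $x-1,y-1\notin\sqrt{\Lambda'}$, and $\sqrt{\Lambda'}$ may be larger. We therefore split into two cases.

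\textbf{First case: $x-1\notin\sqrt{\Lambda'}$ and $y-1\notin\sqrt{\Lambda'}$.} Then all hypotheses of Theorem~\ref{thmodgtwo} hold for~$\Lambda'$. That theorem gives $x=y^{\pm1}$ for all such pairs outside a finite exceptional set. The exceptional set for~$\Lambda'$ contains the exceptional set for~$\Lambda$ restricted to this case, so only finitely many exceptions arise here.

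\textbf{Second case: $x-1\in\sqrt{\Lambda'}$ or $y-1\in\sqrt{\Lambda'}$.} By symmetry, suppose $x-1\in\sqrt{\Lambda'}$. Since $x\in\Lambda'$, Theorem~\ref{thmodgone} applied to the finitely generated group~$\Lambda'$ shows that~$x$ lies in a finite set. In particular $\height(x)$ is bounded in terms of~$\Lambda'$. Now return to the original hypotheses over~$\Lambda$. Proposition~\ref{prhxhyb} bounds $\height(y)$ in terms of this bound and~$\Lambda$. Let~$K$ be a number field containing~$\Lambda$; then $x,y\in\Lambda\subseteq K$. Northcott's theorem then leaves only finitely many pairs $(x,y)$. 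The case $y-1\in\sqrt{\Lambda'}$ is identical, with the roles of~$x$ and~$y$ exchanged; Proposition~\ref{prhxhyb} is symmetric because~\eqref{exayb} is.

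Combining the two cases, every admissible pair over~$\Lambda$ satisfies $x=y^{\pm1}$, apart from finitely many exceptions. The only step requiring care is the loss of hypothesis~\eqref{enotinrad} when $\Lambda$ is enlarged, and the second case handles it through Theorem~\ref{thmodgone}, Proposition~\ref{prhxhyb} and Northcott's theorem.
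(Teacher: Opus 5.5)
Your proof is correct and follows the paper's argument: enlarging $\Lambda$ loses only hypothesis~\eqref{enotinrad}, and Theorem~\ref{thmodgone} shows this can fail only finitely often. The paper handles $y$ more directly: since $b\ne0$, relation~\eqref{exayb} gives $y-1\in\sqrt{\Lambda'}$ as soon as $x-1\in\sqrt{\Lambda'}$, so Theorem~\ref{thmodgone} bounds both $x$ and $y$ at once, and neither Proposition~\ref{prhxhyb} nor Northcott's theorem is needed.
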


\begin{proof}
We have to show the following: if ${x,y\in \Lambda}$ satisfy~\eqref{enotinrad} and~\eqref{exayb} with ${a,b\ne 0}$,  then, with finitely many exceptions, they satisfy ${x-1, y-1\notin\sqrt{\Lambda'}}$. Thus, assume that ${x-1\in \sqrt{\Lambda'}}$. Then ${y-1\in \sqrt{\Lambda'}}$ as well by~\eqref{exayb}. Now Theorem~\ref{thmodgone} bounds both $\height(x)$ and $\height(y)$, whence the result. 
\end{proof}

Now we are ready to complete the proof of Theorem~\ref{thmodgtwo}.  Let~$K$ be the number field generated by~$\Lambda$ and let~$S$ be a finite subset of $M_K$ such that ${\Lambda\le \OO_S^\times}$. By Proposition~\ref{prreplace}, we may assume that ${\Lambda= \OO_S^\times}$. In particular,
${\sqrt\Lambda \cap K^\times =\Lambda}$.

Let~$a$ and~$b$ be as in~\eqref{exayb} and ${\delta:=\gcd(a,b)}$. Then 
$$
(x-1)^ {a/\delta}(y-1)^{b/\delta}\in  \sqrt\Lambda \cap K^\times =\Lambda. 
$$ 
Hence we may assume that~$a$ and~$b$ are coprime. It follows that 
$$
x-1\in (K^\times)^{|b|}\Lambda, \qquad y-1\in (K^\times)^{|a|}\Lambda. 
$$
If ${|b|>1 }$ then $\height(x)$ is bounded by Theorem~\ref{thppz}, and Proposition~\ref{prhxhyb} bounds $\height(y)$. Similarly, both heights are bounded if ${|a|>1}$.

We are left with the case ${|a|=|b |=1}$. Without loss of generality we may assume that ${a=b=1}$ or ${a=1}$,~${b=-1}$. In the former case ${(x-1) (y-1)\in \OO_S^\times}$, which implies that both ${x-1}$ and ${y-1}$ are $S$-units. Hence their heights are bounded by Theorem~\ref{thunits}.

Now assume that ${a=1}$ and ${b=-1}$. Then ${x-1=\gamma(y-1)}$, where ${\gamma \in \Lambda}$. Hence 
\begin{equation}
\label{esfexy}
(x,-1,-\gamma y, \gamma)
\end{equation}
is a solution of the equation ${z_1+\cdots+z_4=0}$ in ${z_1, \ldots, z_4\in \OO_{S}^\times}$. Theorem~\ref{thexp} implies that this equation has, up to equivalence, at most finitely  primitive solutions. Since distinct solutions with ${z_2=-1}$ are not equivalent, there can be at most finitely many primitive solutions of the shape~\eqref{esfexy}.

Now assume that~\eqref{esfexy} is a non-primitive solution. Then we have one of the following:
\begin{alignat}2
\label{exyone}
&x-1=0, \qquad &&-\gamma y+\gamma=0;\\
\label{ex=y}
&x-\gamma y=0, \qquad &&-1+\gamma=0;\\
\label{exy=1}
&x+\gamma=0, \qquad &&-1-\gamma y=0. 
\end{alignat}
Here~\eqref{exyone} is impossible because ${x,y\ne 1}$, in case~\eqref{ex=y} we have ${y=x}$ and in case~\eqref{exy=1} we have ${y=x^{-1}}$. 
The theorem is proved. \qed

\subsection{The greatest common divisor of ${x-1}$ and ${y-1}$}
\label{ssgcd}

The proof of Theorem~\ref{thmodgthree} exploits the following property: if ${x,y \in \Lambda}$ are both distinct from~$1$ and ${x\ne y^{\pm1}}$, then ${x-1}$ and ${y-1}$ cannot have big common divisor. In this subsection we recall the deep result of Corvaja and Zannier~\cite{CZ05} for the case when~$x$ and~$y$ are multiplicatively independent. In the case when they are multiplicatively dependent, we also give a suitable statement.

\subsubsection{The independent case}

For ${x,y\in \bar\Q^\times}$ we define the \textit{logarithmic gcd} of~$x$ and~$y$ by 
$$
\lgcd(x,y) =\sum_{v\in M_K}\min\{\height_v(x), \height_v(y)\}, 
$$
where~$K$ is a number field containing both~$x$ and~$y$ and the local heights $\height_v(\cdot)$ are defined in Subsection~\ref{sslocalh}. A straightforward verification shows that for non-zero ${x,y\in \Z}$ we have ${\lgcd(x,y) =\log\gcd(x,y)}$, which justifies the terminology. 

It is not hard to express ${\lgcd(x,y)}$ in terms of the ``global'' heights: 
$$
\lgcd(x,y):= \height(x,y) - \height(x/y).
$$ 
However, the definition using the local heights is more convenient for our purposes.

The following fundamental result is due to Corvaja and Zannier 
\cite[Corollary~1]{CZ05}.

\begin{theorem}[Corvaja, Zannier]
\label{thcorza}
Let~$\Lambda$ be a finitely generated subgroup of $\bar\Q^\times$, and ${\eps>0}$. Then, for multiplicatively independent ${x,y\in \Lambda}$, we have 
$$
\lgcd(x-1,y-1)\le \eps\max\{\height(x), \height(y)\} +O_\eps(1), 
$$
where the implied constant depends on~$\Lambda$ and~$\eps$.  
\end{theorem}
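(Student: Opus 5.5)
This statement is quoted from Corvaja and Zannier \cite[Corollary~1]{CZ05}, and in the paper it is used as a black box. If one had to prove it, I would follow their strategy, based on the Schmidt subspace theorem. Fix a number field~$K$ containing~$\Lambda$ and a finite ${S\subset M_K}$ with ${\Lambda\le\OO_S^\times}$. Since $x$ and~$y$ are $S$-units, the local heights $\height_v(x-1)$ and $\height_v(y-1)$ at places ${v\in S}$ are controlled by Corollary~\ref{cobaker}: they are $O(\log^\ast\max\{\height(x),\height(y)\})$. So only the part of $\lgcd(x-1,y-1)$ supported outside~$S$ matters, and this part is the logarithm of the norm of an $S$-integral ideal~$D$ dividing both ${x-1}$ and ${y-1}$. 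The goal is to show that, for $\height(x,y)$ large, ${\log\NN D>\eps\max\{\height(x),\height(y)\}}$ forces $(x,y)$ into a finite union of proper algebraic subsets of $\mathbb{G}_m^2$.

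The key construction is as follows. Write ${x=1+u}$ and ${y=1+w}$, where ${D\mid u}$ and ${D\mid w}$. Fix a large integer~$h$ and consider the vector of monomials $x^iy^j$ with ${i+j\le h}$; its coordinates are $S$-units. Re-expanding these monomials in powers of~$u$ and~$w$, the combinations corresponding to monomials $u^aw^b$ with ${a+b\ge k}$ are divisible by~$D^k$. Counting, most of these linear combinations are highly divisible by~$D$, which gives a large total divisibility. I would then apply the ($S$-adic) subspace theorem to this vector:
\begin{itemize}
\item at places in~$S$, use the coordinate forms, together with the forms obtained from the $(u,w)$-expansion, at one chosen place;
\item feed the divisibility by powers of~$D$ in through the product formula, as a smallness of the remaining forms at the places dividing~$D$ (equivalently, divide the vector by a suitable power of a generator, after enlarging~$S$ so that $\OO_S$ is principal).
\end{itemize}
Choosing~$h$ and~$k$ in terms of~$\eps$, the exponent count shows that the double product of forms is at most $\max(\height)^{-\delta}$ times the height power whenever ${\log\NN D>\eps\max\{\height(x),\height(y)\}}$. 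The subspace theorem then places all such vectors in finitely many proper subspaces, that is, ${P(x,y)=0}$ for finitely many non-zero polynomials~$P$ of degree at most~$h$.

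Finally, I would analyze the points of $\Lambda^2$ lying on a curve ${P=0}$. By Laurent's theorem (the Mordell--Lang statement for tori), these points lie in finitely many translates of subtori contained in the curve. A one-dimensional subtorus coset forces a relation ${x^ay^b=c}$ with ${(a,b)\ne(0,0)}$. Since $x$ and~$y$ are assumed multiplicatively independent, and $c$ ranges over finitely many values, one reduces to the case where $x$ and~$y$ are parametrized by a single element~$t$ of a rank-one group along each coset. There, $\lgcd(x-1,y-1)$ can be bounded directly: in the torsion-translated case by $\height$ of a resultant of two polynomials in~$t$, and otherwise by induction on the dimension. This yields ${O_\eps(1)}$, or at most a bounded multiple of ${\eps\max\{\height(x),\height(y)\}}$.

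The main obstacle is the construction of the linear forms with enough vanishing at~$D$ while keeping them linearly independent at each place. The delicate part is the exact exponent bookkeeping, which ensures that the subspace-theorem saving beats the loss for every ${\eps>0}$. I would copy the Bugeaud--Corvaja--Zannier counting, which handles the case ${a^n-1}$, ${b^n-1}$, and adapt it to two independent parameters.
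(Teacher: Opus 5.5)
The paper does not prove this statement: it quotes \cite[Corollary~1]{CZ05} and uses it as a black box. Your outer architecture is sound: Baker at the places of~$S$, the subspace theorem to put $(x,y)$ on finitely many curves of degree $\le h$, Laurent's theorem, then an analysis on torus cosets. However, the central step, where the large divisor~$D$ is supposed to produce the saving, fails as you describe it.

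The subspace theorem works with a fixed finite set of places. The primes dividing~$D$ lie outside~$S$ and move with $(x,y)$, so they cannot supply small forms. Worse, by the product formula, if $D^k$ divides an $S$-integral value $L(X)\ne0$, then $\prod_{v\in S}|L(X)|_v^{d_v}=\NN(L(X)\OO_S)\ge\NN(D)^k$. So this divisibility makes the $S$-adic product \emph{larger}.

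The forms $(X-1)^a(Y-1)^b$ are not small at any $v\in S$ either, since Corollary~\ref{cobaker} bounds $|x-1|_v$ from below there. Using them at a chosen place of~$S$ therefore gains nothing.

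Finally, your vector $X=(x^iy^j)_{i+j\le h}$ has $S$-unit coordinates. Its projective height $H(X)=\prod_v\|X\|_v^{d_v}$ therefore equals $\prod_{v\in S}\|X\|_v^{d_v}$, and coordinate forms give exactly the trivial exponent. Dividing all of~$X$ by a power of a generator~$\delta$ of~$D$ does not change the projective point. Dividing the entries $(x-1)^a(y-1)^b$ by different powers $\delta^{a+b}$ makes the linear forms depend on~$\delta$, which the subspace theorem does not allow, because $\delta$ has height comparable to the solutions. As written, $D$ never enters the inequality, and this is precisely the step you defer to ``the Bugeaud--Corvaja--Zannier counting''.

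The missing idea is that~$D$ must enter through the height, as a common factor of \emph{all} coordinates. Take $P=(x^iy^j-1)_{0<i+j\le h}$. These are the values of a basis of the space~$W_h$ of polynomials of degree $\le h$ vanishing at $(1,1)$, with $N:=\dim W_h\approx h^2/2$.
\begin{itemize}
\item Every entry lies in $D=\gcds(x-1,y-1)$, and $x-1$, $y-1$ are entries. Hence $\|P\|_v=|\delta|_v$ for $v\notin S$, and $\prod_{v\in S}\|P\|_v^{d_v}=\NN(D)\,H(P)$.
\item At each $v\in S$, use the forms attached to the basis $\{m-m_{0,v}\}_{m\ne m_{0,v}}$ of~$W_h$, where $m_{0,v}$ is the $v$-adically smallest monomial; there are finitely many such choices.
\item Since the monomials are $S$-units, the double product of these forms is at most $e^{O(h(\height(x)+\height(y)))}$. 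The subspace theorem only needs it to be below $\NN(D)^N H(P)^{-\delta'}$.
\item This holds once the part of $\lgcd(x-1,y-1)$ outside~$S$ exceeds $c\,h^{-1}\max\{\height(x),\height(y)\}+O(1)$ for an absolute~$c$. Taking $h\ge c/\eps$ puts $(x,y)$ on finitely many curves $f=0$ with $0\ne f\in W_h$.
\end{itemize}
No higher powers $D^k$ are needed.

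Your last step also needs correcting. The points of~$\Lambda^2$ on a coset of a one-dimensional subtorus are $(\gamma_1t^p,\gamma_2t^q)$ with~$t$ in a finitely generated group of arbitrary rank, for example the coset $\{(2t,t)\}$; it is not a rank-one group. A torsion translate consists of dependent pairs, so it is simply excluded rather than handled by a resultant. In general, after clearing negative exponents, one of two things happens:
\begin{itemize}
\item $\gamma_1T^p-1$ and $\gamma_2T^q-1$ share a root~$\alpha$. Then $x=(t/\alpha)^p$ and $y=(t/\alpha)^q$ are dependent, so the coset is excluded.
\item They have no common root. Then $\gcds(x-1,y-1)$ divides their fixed non-zero resultant after enlarging~$S$, which gives $O(1)$.
\end{itemize}
No induction on dimension is involved.
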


This theorem is non-effective: its proof is based on the subspace theorem. 

\subsubsection{The dependent case}

We need an analogue of Theorem~\ref{thcorza} for multiplicatively dependent~$x$ and~$y$. It will be given in Proposition~\ref{prmultdep} below, after some preparation. 

If ${x,y\in \bar\Q^\times}$ are multiplicatively dependent, then   either the multiplicative group $\langle x,y\rangle$  is an infinite cyclic group or it  contains a non-trivial root of unity.
If $\langle x,y\rangle$  is  infinite cyclic and~$t$ its generator, then  ${x=t^m}$ and ${y=t^n}$, where~$m$ and~$n$ are coprime integers. When $\langle x,y\rangle$ contains a non-trivial root of unity, a similar statement is given in the following lemma. 

\begin{lemma}
\label{letonoto}
Let~$x$ and~$y$ be multiplicatively dependent non-zero algebraic numbers, both not roots of unity.  Assume that  $\langle x,y\rangle$ contains a root of unity distinct from~$1$. Then  there exist ${t\in \langle x,y\rangle}$ and non-zero coprime integers ${m,n}$ such that 
\begin{equation}
\label{etorsionmn}
x=t^m, \qquad y=\xi t^n,
\end{equation}
where~$\xi$ a root of unity satisfying ${\xi^m\ne 1}$. 
Symmetrically, there exists ${u\in \langle x,y\rangle}$ such that
${x=\theta u^m}$ and ${y=u^n}$ 
(with the same $m,n$ as in~\eqref{etorsionmn}), 
where~$\theta$ is a root of unity satisfying ${\theta^n\ne 1}$. 
\end{lemma}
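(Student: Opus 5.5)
The plan is to make the structure of the group $G:=\langle x,y\rangle$ completely explicit and then read off both representations. Since $x$ and $y$ are multiplicatively dependent and $x$ is not a root of unity, $G$ is a finitely generated abelian subgroup of $\bar\Q^\times$ of rank~$1$. Its torsion subgroup is cyclic, say $\mu_k$ generated by $\zeta:=\zeta_k$, and by hypothesis ${k>1}$. Hence ${G=\langle s\rangle\times\mu_k}$ for some $s$ that is not a root of unity, and we can write
$$
x=\zeta^{a}s^{m}, \qquad y=\zeta^{b}s^{n}
$$
with integers $a,b,m,n$. Both $m$ and $n$ are non-zero because $x$ and $y$ are not roots of unity. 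Moreover, the projection $G\to\langle s\rangle$ is surjective, and its image is generated by $s^m$ and $s^n$. Therefore ${\gcd(m,n)=1}$, and these $m,n$ are the integers of the statement.

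The key computation concerns the element ${\lambda:=y^{m}x^{-n}=\zeta^{bm-an}}$. I will show that ${\lambda\ne1}$ and, more precisely, that $\lambda$ generates $\mu_k$. An element $x^iy^j$ is torsion exactly when ${im+jn=0}$, that is, when ${(i,j)=r(-n,m)}$ for some integer~$r$. Such an element equals $\lambda^r$, so the torsion of $G$ is $\langle\lambda\rangle$, and hence ${\langle\lambda\rangle=\mu_k\ne\{1\}}$. Equivalently, if ${\lambda=1}$, then ${x^n=y^m}$. Choosing $c,e$ with ${cm+en=1}$ and setting ${w:=x^{c}y^{e}}$, one checks that ${w^m=x}$ and ${w^n=y}$. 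Then ${G=\langle w\rangle}$ would be infinite cyclic, which contradicts the presence of a non-trivial root of unity.

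To get the first representation, I look for ${t=\eta s}$ with ${\eta\in\mu_k}$ and ${t^m=x}$, i.e.\ ${\eta^m=\zeta^a}$. Then I put ${\xi:=y\,t^{-n}}$, which is a root of unity, and compute
$$
\xi^m=y^m t^{-mn}=y^m x^{-n}=\lambda\ne1,
$$
which is exactly the required condition. The symmetric representation is obtained in the same way. I take ${u=\eta' s}$ with ${\eta'^{\,n}=\zeta^b}$, so that ${u^n=y}$, and put ${\theta:=x u^{-m}}$. Then
$$
\theta^n=x^n y^{-m}=\lambda^{-1}\ne1.
$$

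The main obstacle is the extraction of the roots $\eta$ and $\eta'$ inside $G$. The equation ${\eta^m=\zeta^a}$ is solvable in $\mu_k$ only if ${\zeta^a\in\mu_k^m}$. I would first try to reduce to this by replacing the generator $s$ by $\zeta^c s$, and by using that $\lambda$ generates $\mu_k$. However, $\lambda$ generating $\mu_k$ only controls the class of $\zeta^{a}$ in ${\mu_k/\mu_k^m}$, a cyclic group of order ${\gcd(m,k)}$, up to powers of $\zeta^{an}$. So this step is automatic only when ${\gcd(m,k)=1}$, or symmetrically when ${\gcd(n,k)=1}$ for the second representation.

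I expect the case where $k$ shares a factor with $m$ to need genuine care, and possibly to fail. In ${G=\langle -1,s\rangle}$ with ${x=-s^2}$ and ${y=s^3}$, the element $x$ is not a square in $G$, while the symmetric form ${y=s^3}$, ${x=(-1)s^2}$ does hold. So this is where I would check whether the statement must be weakened to require only one of the two representations, depending on which of $m$, $n$ is coprime to~$k$.
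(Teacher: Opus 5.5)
Your proposal does not prove the lemma as stated, but the obstruction you isolate is real. The lemma is false, and your example refutes it.

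Take $G=\langle -1,s\rangle$ with $x=-s^2$ and $y=s^3$. Then $\langle x,y\rangle=G$, since $y/x=-s$ and $x(-s)^{-2}=-1$, and $x^3y^{-2}=-1$. Suppose $x=t^m$ and $y=\xi t^n$ with $t=\pm s^j\in G$. Comparing exponents of $s$ gives $jm=2$ and $jn=3$, so $j=\pm1$ and $(m,n)=\pm(2,3)$. Since $m$ is even, $t^m=s^2\ne x$.

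The paper's proof fails at exactly the step you flagged. It takes $t$ to generate a maximal infinite cyclic subgroup of $\langle x,y\rangle$ containing $x$, and then simply asserts that $x=t^m$ and $y=\xi t^n$. In your example the only infinite cyclic subgroup containing $x$ is $\langle x\rangle$ itself, and $y$ is not of the form $\xi x^n$. Maximality of $\langle t\rangle$ does not force the image of $t$ to generate $\langle x,y\rangle$ modulo torsion. The rest of your computation is correct: $\gcd(m,n)=1$, the torsion of $G$ is $\langle\lambda\rangle$, and hence $\xi^m=\lambda\ne1$ and $\theta^n=\lambda^{-1}\ne1$ whenever the required roots $\eta,\eta'$ exist in $\mu_k$.

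Your suggested fallback, keeping only one of the two representations, also fails. Take $x=\zeta_6s^2$ and $y=\zeta_6s^3$. Then $\langle x,y\rangle=\mu_6\times\langle s\rangle$, $x^3y^{-2}=\zeta_6$, and again $(m,n)=\pm(2,3)$. Since $\zeta_6$ is neither a square nor a cube in $\mu_6$, neither $x=t^{\pm2}$ nor $y=u^{\pm3}$ is solvable in the group.

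What is true is exactly what you proved:
\begin{itemize}
\item the first representation exists when $\gcd(m,k)=1$;
\item the second exists when $\gcd(n,k)=1$;
\item both exist unconditionally if you allow $t=\eta s$ with $\eta\in\mu_{|m|k}$, $\eta^m=\zeta^a$, and $u=\eta' s$ with $\eta'\in\mu_{|n|k}$, $(\eta')^{n}=\zeta^b$. Your identities for $\xi^m$ and $\theta^n$ are unchanged.
\end{itemize}

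This weaker form is all that Proposition~\ref{prmultdep} needs. The order $\ell$ of $\xi=\zeta^b\eta^{-n}$ divides $|m|k$, so $\ell/\gcd(m,\ell)$ still divides $k$, hence divides $\mu$. The argument can then be run in the $S'$-integers of $K(\eta)$, with $S'$ the places above $S$, and the trivial gcd descends to $\OO_S$.
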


\begin{proof}
Let~$t$ be a generator of the maximal infinite cyclic subgroup of $\langle x,y\rangle$ containing~$x$. Then we  have~\eqref{etorsionmn} with nonzero coprime integers~$m$ and~$n$ and some root of unity~$\xi$.  
Let ${k\in \Z}$ be such that ${nk\equiv 1\pmod m}$. If  ${\xi^m=1}$ then  ${x=(\xi^kt)^m}$ and ${y=(\xi^kt)^n}$. This means that  ${\langle x,y\rangle=\langle\xi^kt\rangle}$ is an infinite cyclic group, contradicting the hypothesis. Hence ${\xi^m\ne1}$. 

Symmetrically, there exist ${u\in \langle x,y\rangle}$ and non-zero coprime integers ${m',n'}$ such that 
${x=\theta u^{m'}}$ and  ${y=u^{n'}}$, 
where~$\theta$ a root of unity satisfying ${\theta^{n'}\ne 1}$.  Since each of~$t$ and~$u$ generates a maximal subgroup of ${\langle x,y\rangle}$, one of the quotients  $t/u$ or $t/u^{-1}$ is a root of unity, and we may assume that it is $t/u$. This implies that ${m'=m}$ and ${n'=n}$. 
\end{proof}

The following statement is well-known, we omit the proof.

\begin{lemma}
\label{luniuniuni}
Let~$k$ be a positive integer and $\zeta_k$ a primitive root of unity of order~$k$. If ${k=p^e}$ is a prime power, then ${1-\zeta_k\mid p}$. If~$k$ has at least two distinct prime divisors, then ${1-\zeta_k}$ is a unit. 
\end{lemma}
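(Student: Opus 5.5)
We use the cyclotomic polynomial factorization $\prod_{j=1}^{k-1}(1-\zeta_k^j)=\Phi$-type identities. Starting from $X^{k-1}+\cdots+1=\prod_{j=1}^{k-1}(X-\zeta_k^j)$ and setting $X=1$, we get $\prod_{j=1}^{k-1}(1-\zeta_k^j)=k$. The $k$-th cyclotomic polynomial gives the finer relation $\Phi_k(1)=\prod_{\gcd(j,k)=1}(1-\zeta_k^j)$. Moreover, $\Phi_k(1)=p$ if $k=p^e$ with $e\ge1$, and $\Phi_k(1)=1$ if $k$ has at least two distinct prime divisors. Both values follow from $\Phi_{p^e}(X)=\Phi_p(X^{p^{e-1}})=1+X^{p^{e-1}}+\cdots+X^{(p-1)p^{e-1}}$, together with Möbius inversion of $\prod_{d\mid k,\,d>1}\Phi_d(1)=k$.

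First consider the prime power case $k=p^e$. The factor $1-\zeta_k$ is one of the factors in the product $\Phi_k(1)=p$, and all factors are algebraic integers. Hence $1-\zeta_k$ divides $p$ in the ring of integers of $\Q(\zeta_k)$.

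Now consider the case where $k$ has at least two distinct prime divisors. Then $\Phi_k(1)=1$, so the product of the algebraic integers $1-\zeta_k^j$ over $\gcd(j,k)=1$ equals~$1$. Each factor therefore divides~$1$, and in particular $1-\zeta_k$ is a unit.

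For completeness, the value $\Phi_k(1)=1$ in this case is checked by induction on~$k$. We have $k=\prod_{d\mid k,\,d>1}\Phi_d(1)$. The prime power divisors $d=p^f$ contribute $\prod_p p^{v_p(k)}=k$. By induction, every proper divisor with at least two prime factors contributes~$1$. This forces $\Phi_k(1)=1$.

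There is no real obstacle here. The only point needing care is the evaluation of $\Phi_k(1)$, which is routine. An alternative route avoiding $\Phi_k$ is to note that $1-\zeta_k^j$ and $1-\zeta_k$ are associates whenever $\gcd(j,k)=1$, since their quotient $1+\zeta_k+\cdots+\zeta_k^{j-1}$ is integral and the same holds symmetrically. One then compares the norm $N(1-\zeta_k)=\Phi_k(1)$ with the prime factorizations above.
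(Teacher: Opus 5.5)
Your proof is correct. Writing $\Phi_k(1)=\prod_{\gcd(j,k)=1}(1-\zeta_k^j)$ and showing that $\Phi_k(1)$ equals $p$ or $1$ settles both claims at once, because every cofactor is an algebraic integer; the evaluation via $\Phi_{p^e}(X)=\Phi_p(X^{p^{e-1}})$ and induction on $\prod_{d\mid k,\,d>1}\Phi_d(1)=k$ is sound.

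The paper gives no proof of this lemma; it calls the statement well known and omits the argument. What you wrote is the standard argument it has in mind. The only tacit point is that $k=1$ must be excluded, since $1-\zeta_1=0$, and you handle this by requiring $e\ge 1$.
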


When~$x$ and~$y$ are multiplicatively dependent, is more convenient to use the ``usual'' $\gcd$ in a suitable ring of $S$-integers rather than  the logarithmic $\gcd$. If~$K$ is a number field and~$S$ a finite subset of $M_K$, then for ${x,y\in \OO_S}$ we denote by $\gcds(x,y)$ the ideal of~$\OO_S$ generated by~$x$ and~$y$. When it is a principal ideal $(a)$ then we slightly abuse the notation by writing ${\gcds(x,y)=a}$. 

The absolute norm of a non-zero ideal~$\II$ of~$\OO_S$ is defined  by ${\NN(\II):=\#\OO_S/\II}$. A straightforward verification shows that
\begin{equation*}
\log\NN\bigl( \gcds(x,y)\bigr)=d\sum_{v\in M_K\smallsetminus S}\min \{\height_v(x), \height_v(y)\}, \qquad d:=[K:\Q].  
\end{equation*}

\begin{proposition}
\label{prgcdsres}
For ${x,y\in \OO_S}$ and  coprime positive integers~$m$,~$n$ we have\footnote{When ${x=1}$ we understand that ${(x^m-1)/(x-1):=m}$.} 
$$
\gcds\left(x-1, \frac{y^n-1}{y-1}\right) \ \text{divides} \  n , \qquad \gcds\left(\frac{x^m-1}{x-1}, \frac{y^n-1}{y-1}\right)=1. 
$$
\end{proposition}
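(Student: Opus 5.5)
The statement as printed has two independent variables $x,y$, and then it cannot hold: in $\Z$ (with $S$ the infinite place), $x=4$, $y=2$, $n=2$ give $\gcd(x-1,y+1)=3$, which does not divide~$2$. The intended statement must therefore have $y=x$:
$$
\gcds\left(x-1, \frac{x^n-1}{x-1}\right) \mid n, \qquad \gcds\left(\frac{x^m-1}{x-1}, \frac{x^n-1}{x-1}\right)=1 .
$$
This is the form used for multiplicatively dependent arguments, e.g. $x=t^m$, $y=t^n$. I would prove this version by direct congruence arguments in $\OO_S$, working with the ideal generated by the two elements in each case.

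For the first claim, write $\Phi_n(x):=(x^n-1)/(x-1)=1+x+\cdots+x^{n-1}\in\OO_S[x]$. The convention in the footnote, $(x^n-1)/(x-1):=n$ when $x=1$, agrees with this polynomial. Since $x^k-1$ is divisible by $x-1$ for every $k\ge0$, we have
$$
\Phi_n(x)-n=\sum_{k=0}^{n-1}(x^k-1)\in (x-1)\OO_S .
$$
Hence the ideal $(x-1,\Phi_n(x))$ equals $(x-1,n)$, which contains~$n$. So $\gcds\bigl(x-1,\Phi_n(x)\bigr)$ divides~$n$.

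For the second claim, let $\II=(\Phi_m(x),\Phi_n(x))$ and suppose $\II\ne\OO_S$. Then some prime ideal $\gerp$ of $\OO_S$ contains~$\II$. We have $x^m\equiv1$ and $x^n\equiv1 \pmod{\gerp}$, because $x^k-1=(x-1)\Phi_k(x)$. In particular $x$ is invertible modulo~$\gerp$. Choose integers $a,b$ with $am+bn=1$; then $x=x^{am+bn}\equiv 1\pmod\gerp$, where negative exponents are allowed since $x$ is a unit in $\OO_S/\gerp$. By the first step, $\Phi_m(x)\equiv m$ and $\Phi_n(x)\equiv n \pmod{\gerp}$, so $m,n\in\gerp$. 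Since $\gcd(m,n)=1$, this forces $1\in\gerp$, a contradiction. Hence $\II=\OO_S$.

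I see no real obstacle here. The only points needing care are the degenerate case $x=1$, which is covered by the footnote convention because $\Phi_k(1)=k$, and invertibility of $x$ modulo~$\gerp$, which is needed to use negative exponents.
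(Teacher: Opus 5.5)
Your reading of the statement is correct. With independent $x,y$ both claims fail: your example breaks the first, and $x=2$, $y=4$, $m=2$, $n=3$ gives $\gcd(3,21)=3$ for the second. The paper's proof, which works with resultants of polynomials in the single variable~$T$, and all its applications (to $t$ and $z$ in the proof of Proposition~\ref{prmultdep}, and in~\eqref{egcdsyyr}) concern the case $y=x$.

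Your proof of that version is correct, but it takes a different route from the paper. The paper works in $\Z[T]$: the resultant of two polynomials lies in the ideal they generate, so the $\gcds$ of their values at~$x$ divides it. The resultant of $T-1$ and $(T^n-1)/(T-1)$ is $n$ up to sign. The resultant of $(T^m-1)/(T-1)$ and $(T^n-1)/(T-1)$ is a product of differences $\zeta_a-\zeta_b$ with coprime $a,b>1$, each a unit by Lemma~\ref{luniuniuni}, hence it is $\pm1$.

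You replace this with two elementary steps. For the first claim you use the congruence $(x^n-1)/(x-1)\equiv n \pmod{x-1}$, which even gives the equality of ideals $\bigl(x-1,(x^n-1)/(x-1)\bigr)=(x-1,n)$. For the second you reduce modulo a hypothetical prime $\gerp$ and use a B\'ezout relation $am+bn=1$ in the exponent to force $x\equiv 1$, hence $m,n\in\gerp$.

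Your argument is self-contained and needs neither resultants nor cyclotomic units. The paper's is shorter given those standard facts, and it matches the resultant technique reused in the proof of Proposition~\ref{prprdiveverywhere}. One cosmetic point: this paper uses $\Phi_n$ for the $n$th cyclotomic polynomial, so you should rename your $\Phi_n(x)=(x^n-1)/(x-1)$ to avoid a clash.
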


\begin{proof}
This follows from the corresponding results about resultants: the resultant of the polynomials ${T-1}$ and ${(T^n-1)/(T-1)}$ is, trivially,~$n$, while that of polynomials ${(T^m-1)/(T-1)}$ and ${(T^n-1)/(T-1)}$ is $\pm1$. Indeed, this resultant is a product of factors of the form ${\zeta_a-\zeta_b}$ where ${a,b>1}$ are coprime integers. Each of these factors is a unit  by Lemma~\ref{luniuniuni}. 
\end{proof}

Now we are ready to state an analogue of Theorem~\ref{thcorza} for ~$x$ and~$y$.

\begin{proposition}
\label{prmultdep}
Let~$K$ be a number field.  We denote by~$\mu$ the order of the group of roots of unity from~$K$. Let~$S$ be a finite subset of~$M_K$ such that ${\mu\in \OO_S^\times}$.  Then for multiplicatively dependent ${x,y\in \OO_S^\times}$, both distinct from~$1$, the following holds.

\begin{enumerate}
\item
\label{it-one}
If  $\langle x,y\rangle$   is an infinite cyclic group   with generator~$t$  then 
$$
\gcds(x-1,y-1)=t-1.
$$

\item 
\label{ione}
If $\langle x,y\rangle$ contains a root of unity distinct from~$1$
 then 
$$
\gcds(x-1,y-1)=1.
$$
\end{enumerate}
\end{proposition}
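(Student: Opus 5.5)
Both parts reduce to Proposition~\ref{prgcdsres}, after writing $x$ and $y$ as powers of a common element. Throughout, $\mu\in\OO_S^\times$ makes every $1-\zeta$, with $\zeta\ne1$ a root of unity in~$K$, an $S$-unit. Indeed, by Lemma~\ref{luniuniuni}, $1-\zeta$ divides a prime $p\mid\mu$ or is a unit. Note also that elements of $\langle x,y\rangle$ are $S$-units, so they lie in~$\OO_S$ and Proposition~\ref{prgcdsres} applies to them.

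For item~\ref{it-one}, write $x=t^m$ and $y=t^n$ with $m,n$ coprime and non-zero, since $x,y\ne1$. Replacing $x$ by $x^{-1}$ changes $x-1$ only by the unit factor $-x^{-1}$, so we may assume $m,n>0$. Clearly $t-1$ divides both $x-1$ and $y-1$. Factor $x-1=(t-1)A$ and $y-1=(t-1)B$ with $A=(t^m-1)/(t-1)$ and $B=(t^n-1)/(t-1)$. Then
$$
\gcds(x-1,y-1)=(t-1)\cdot\gcds\bigl(A,B\bigr)\quad\text{modulo the cross terms},
$$
and to make this precise we use $\gcds(aA,aB)=a\gcds(A,B)$ in the Dedekind domain $\OO_S$. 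By the second statement of Proposition~\ref{prgcdsres}, applied with $x=y=t$, we get $\gcds(A,B)=1$. Hence $\gcds(x-1,y-1)=t-1$.

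For item~\ref{ione}, the main case is when $x$ and $y$ are not roots of unity. Lemma~\ref{letonoto} gives $x=t^m$ and $y=\xi t^n$ with $\xi^m\ne1$, where $t\in\langle x,y\rangle\subset\OO_S^\times$. Again we may take $m>0$ via $x\mapsto x^{-1}$. Let $\gerp$ be a prime of $\OO_S$ dividing both $x-1$ and $y-1$, and work modulo~$\gerp$. From $t^m\equiv1$ we get $y^m=\xi^m t^{mn}\equiv\xi^m$. From $y\equiv1$ we get $y^m\equiv1$. Hence $\xi^m\equiv1\pmod\gerp$, so $\gerp\mid 1-\xi^m$, which is an $S$-unit since $\xi^m\ne1$. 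This is a contradiction, so no common prime exists and the gcd is~$1$.

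Degenerate cases are handled the same way. If $x$ is a root of unity $\ne1$, then $x-1$ is itself an $S$-unit and the gcd is trivially~$1$; likewise if $y$ is a root of unity. Note that a root of unity in $\langle x,y\rangle$ lies in~$K$, so the $S$-unit fact applies. The only delicate point is checking the sign reduction to $m>0$ and that Lemma~\ref{letonoto} applies. Both are routine, so I expect no serious obstacle; the main step is the congruence argument in item~\ref{ione}.
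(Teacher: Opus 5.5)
Your proof is correct. Item~1 is exactly the paper's argument. For item~2 you take a genuinely different and much shorter route.

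The paper argues by polynomial divisibility:
\begin{itemize}
\item it shows that $\xi T^{|n|}-1$ divides $(T^{\ell|n|}-1)/(T^\delta-1)$, where $\delta=\gcd(m,\ell)$;
\item it applies Proposition~\ref{prgcdsres} to $z=t^\delta$ to obtain $\gcds(x-1,y-1)\mid n$;
\item it uses the symmetric half of Lemma~\ref{letonoto} to obtain $\gcds(x-1,y-1)\mid m$;
\item it concludes from $\gcd(m,n)=1$.
\end{itemize}

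You instead reduce modulo a common prime $\gerp$ and observe that $\xi^m=y^mx^{-n}\equiv 1$, so $\gerp\mid 1-\xi^m\in\OO_S^\times$. For $n<0$ this silently uses that $t$ is invertible modulo $\gerp$, which holds since $t\in\OO_S^\times$.

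Your idea in fact proves more and removes most of the machinery. Since $x,y$ are units congruent to $1$ modulo the ideal $(x-1,y-1)$, every $\zeta\in\langle x,y\rangle$ satisfies $\zeta-1\in(x-1,y-1)$. Hence $\gcds(x-1,y-1)$ divides the $S$-unit $1-\zeta$ for any root of unity $\zeta\ne1$ in $\langle x,y\rangle$. So for item~2 you do not need Lemma~\ref{letonoto}, Proposition~\ref{prgcdsres}, or the separate treatment of the case where $x$ or $y$ is a root of unity.

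The paper's route buys only a uniform resultant-style treatment of both items and the intermediate divisibility $\gcds(x-1,y-1)\mid n$. Neither is needed for the statement.
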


\begin{proof}
Item~\ref{it-one} is easy. We have
${x=t^m}$ and ${y=t^n}$ with non-zero coprime integers~$m$ and~$n$.  
For any non-zero integer~$k$ we have 
${(t^k-1)/(t^{|k|}-1) \in \OO_S^\times}$.  
Hence 
$$
\gcds(x-1,y-1)=(t-1) \gcds\left(\frac{t^{|m|}-1}{t-1}, \frac{t^{|n|}-1}{t-1}\right),    
$$
and the second factor above is~$1$ by Proposition~\ref{prgcdsres}. This proves item~\ref{it-one}. 

Item~\ref{ione} is more delicate. If~$x$ is a root of unity, then ${x-1\mid \mu}$. Since ${\mu \in\OO_S^\times}$, this implies that ${\gcds(x-1,y-1)=1}$.

From now on we assume that none of~$x$ and~$y$ is a root of unity. 
Let $m,n,t,\xi$  be as in Lemma~\ref{letonoto}:
$$
x=t^m, \qquad y=\xi t^n,\qquad \xi^m\ne1. 
$$
Let~$\ell$ be the order of the root of unity~$\xi$. Note that the polynomials ${\xi T^n-1}$ and ${T^m-1}$ cannot have a common root. Indeed, if~$\eta$ is such a common root then ${\eta^{-n}=\xi}$ and ${\eta^m=1}$. Since ${\gcd(m,n)=1}$, this implies that ${\xi^m=1}$, a contradiction.

We set
$$
\delta:=\gcd(m,\ell), \qquad m':=\frac{|m|}{\delta}, \qquad \ell':=\frac{\ell}{\delta}, \qquad z:=t^\delta. 
$$
We claim that the polynomial ${\xi T^{|n|}-1}$ divides ${(T^{\ell |n|}-1)/(T^\delta-1)}$ in the ring $\Z[\xi][T]$. Indeed, ${\xi T^{|n|}-1\mid T^{\ell |n|}-1}$, and ${\xi T^{|n|}-1}$ cannot have common roots with ${T^\delta-1}$ (because the  latter divides ${T^m-1}$)  which proves our claim.  Thus,  ${y-1}$ divides ${(z^{\ell'|n|}-1)/(z-1)}$ in the ring $\OO_S$. It follows that ${\gcds(x-1, y-1)}$ divides the product 
$$
\gcds\left (z-1, \frac{z^{\ell'|n|}-1}{z-1}, \right) \gcds\left (\frac{z^{m'}-1}{z-1}, \frac{z^{\ell'|n|}-1}{z-1}, \right). 
$$ 
Here the first factor divides $\ell'n$, and the second  factor  is~$1$, again by Proposition~\ref{prgcdsres}. Since ${\ell'\mid \ell\mid \mu}$ by the definition of~$\mu$, and  ${\mu\in \OO_S^\times}$, it follows that ${\gcds(x-1,y-1) \mid n}$. 

By symmetry (see again Lemma~\ref{letonoto}), we have
$$
x=\theta u^m, \qquad y=u^n, \qquad \theta^n\ne 1
$$ 
with the same~$m$ and~$n$. Arguing as above, we prove that ${\gcds(x-1,y-1) \mid m}$. Hence ${\gcds(x-1,y-1)=1}$.
 \end{proof}

\subsection{Primitive divisors}
\label{ssprim}

Another indispensable  tool in the proof of Theorem~\ref{thmodgthree} is Schinzel's theory of primitive divisors. In this subsection we collect the results about primitive divisors used in the proof of Proposition~\ref{prmultdepen}, an important step in  the proof of Theorem~\ref{thmodgthree}. 

Let~$K$ be a number field and let ${\gamma\in K^\times}$ be not a root of unity. We consider the sequence $(u_k)_{k\ge 1}$ defined by ${u_k=:\gamma^k-1}$. A finite prime ${v \in M_K}$ with ${|\gamma|_v=1}$ is called a \textit{primitive divisor} of the term $u_n$ if 
$$
|u_n|_v<1, \qquad |u_k|_v  =1 \quad (k=1, \ldots, n-1). 
$$

Denote by $\Phi_n(T)$ the $n$th cyclotomic polynomial. Recall that, for a finite ${ v \in M_K}$, we denote by $\NN v$ the absolute norm of the corresponding prime of~$K$, see Subsection~\ref{ssabc}. If $\OO_v$ is the local ring of~$v$ in~$K$ and $\gerp_v$ is the maximal ideal of $\OO_v$, then ${\NN v=\#\OO_v/\gerp_v}$. 
The following properties are well-known, we omit the proof.

\begin{proposition}
\label{prprprdiv}
Let~$m$ be a positive integer and~$v$  a primitive divisor of $u_m$. Then ${|\Phi_m(\gamma)|_v <1}$, and the image of~$\gamma$ in the cyclic group ${(\OO_v/\gerp_v)^\times}$ is of order~$m$. In particular, ${m\mid \NN v-1}$,  and ${m\mid n}$
for any positive integer~$n$ with 
 ${|u_n|_v  <1}$.

\end{proposition}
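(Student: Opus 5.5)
The claim has two parts: $|\Phi_m(\gamma)|_v<1$, and the image $\bar\gamma$ of~$\gamma$ in $(\OO_v/\gerp_v)^\times$ has order exactly~$m$. The divisibility statements then follow at once. I would work in the residue field $\OO_v/\gerp_v$, a finite field with $\NN v$ elements. Since $|\gamma|_v=1$, the number~$\gamma$ is a $v$-adic unit, so $\bar\gamma$ is a well-defined element of the cyclic group $(\OO_v/\gerp_v)^\times$, whose order is $\NN v-1$.

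\textbf{Order of $\bar\gamma$.} For $k\ge1$ we have $|u_k|_v<1$ if and only if $\bar\gamma^k=1$. The definition of primitive divisor says exactly that $\bar\gamma^m=1$ and $\bar\gamma^k\ne1$ for $1\le k<m$. Hence the order of~$\bar\gamma$ is~$m$.

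\textbf{Divisibility consequences.} By Lagrange's theorem, $m\mid \NN v-1$. If $|u_n|_v<1$, then $\bar\gamma^n=1$, so the order~$m$ divides~$n$.

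\textbf{The cyclotomic factor.} Use the factorization $T^m-1=\prod_{d\mid m}\Phi_d(T)$. All these polynomials have integer coefficients, so each $|\Phi_d(\gamma)|_v\le1$, and the product of the values has absolute value $|u_m|_v<1$. Hence some $d\mid m$ satisfies $|\Phi_d(\gamma)|_v<1$. Since $\Phi_d(T)$ divides $T^d-1$, this gives $|u_d|_v<1$. Primitivity then forces $d=m$, so $|\Phi_m(\gamma)|_v<1$.

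The main obstacle is minimal: everything reduces to reduction modulo $\gerp_v$. The only point to watch is that $\gamma$ must be $v$-integral, which is guaranteed by the hypothesis $|\gamma|_v=1$ built into the definition. No assumption on the characteristic is needed, because the argument never uses separability of~$\Phi_m$.
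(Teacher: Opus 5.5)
Your proof is correct and complete. Reducing modulo $\gerp_v$ gives the order of $\bar\gamma$ directly from the definition of a primitive divisor. The factorization $T^m-1=\prod_{d\mid m}\Phi_d(T)$, together with $\Phi_d(T)\mid T^d-1$ in $\Z[T]$, then correctly forces $|\Phi_m(\gamma)|_v<1$. The paper omits this proof as well known, so there is no written proof to compare against; your argument is presumably the standard one the authors have in mind.
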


The following fundamental theorem is, essentially, due to Schinzel~\cite{Sc74}; see also~\cite{St77} and \cite[Theorem~4.2]{BL21}. 

\begin{theorem}[Schinzel]
\label{thprim}
Let~$K$ be a number field of degree~$d$ and ${\gamma\in K^\times}$ not a root of unity. Define the sequence $(u_k)_{k\ge 1}$ by ${u_k:=\gamma^k-1}$. If the term $u_n$ does not admit a primitive divisor, then~$n$ is effectively bounded in terms of~$d$. 
\end{theorem}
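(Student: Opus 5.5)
The plan is to compare two estimates for the cyclotomic values $\Phi_n(\gamma)$: their total size, which is about $\varphi(n)\height(\gamma)$, against the part of them that can come from non-primitive primes, which is only $O(\log n)$. The starting point is the factorization $u_n=\prod_{k\mid n}\Phi_k(\gamma)$ and the M\"obius inversion
$$
\log|\Phi_n(\gamma)|_v=\sum_{k\mid n}\mu(n/k)\log|\gamma^k-1|_v \qquad (v\in M_K),
$$
which has at most $2^{\omega(n)}$ non-zero terms. Throughout, $n$ is assumed large and all constants are to depend effectively on~$d$ only. Two places in the argument need this uniformity in~$\gamma$. First, a lower bound $\height(\gamma)\ge c(d)>0$ for $\gamma$ of degree at most~$d$ that is not a root of unity; this is effective, for example via Blanksby--Montgomery or Dobrowolski. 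Second, the form of Theorem~\ref{thbaker} in which the dependence on~$\gamma$ enters only through $\height'(\gamma)$.

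\textbf{Step 1 (archimedean and $S$-type places).} I would evaluate $\log|\Phi_n(\gamma)|_v$ for each $v$ outside the set of finite places with $|\gamma|_v=1$.
\begin{itemize}
\item At finite $v$ with $|\gamma|_v\ne1$ the ultrametric inequality gives exactly $\log|\Phi_n(\gamma)|_v=\varphi(n)\log^+|\gamma|_v$.
\item At infinite $v$ with $|\gamma|_v\ne1$ the terms $\log|\gamma^k-1|_v$ are $k\log^+|\gamma|_v$ up to a Baker-type error. The sum then equals $\varphi(n)\log^+|\gamma|_v+O(2^{\omega(n)}\height(\gamma)\log n)$.
\item At infinite $v$ with $|\gamma|_v=1$ there is an upper bound of $2^{\omega(n)}\log2$. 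For the lower bound, Theorem~\ref{thbaker} applied to $\gamma^k\cdot(-1)^0$ gives $\log|\gamma^k-1|_v\ge -C(d)\height(\gamma)\log k$; the relevant pair is $\gamma$ and $-1$, and $\height'(-1)$ is harmless.
\end{itemize}
Summing with weights $d_v/d$ and using the product formula for $\Phi_n(\gamma)$, I expect
$$
\sum_{\substack{v\ \text{finite}\\ |\gamma|_v=1}}\frac{d_v}{d}\log|\Phi_n(\gamma)|_v^{-1}\ \ge\ \varphi(n)\height(\gamma)-C(d)\,2^{\omega(n)}\height(\gamma)\log n .
$$

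\textbf{Step 2 (non-primitive primes).} Next I would show that if $v$ is finite, $|\gamma|_v=1$, $|\Phi_n(\gamma)|_v<1$, and $v$ is not primitive for $u_n$, then the following holds. Let $r$ be the order of $\gamma$ modulo~$\gerp_v$; then $r\mid n$, $r<n$, and $n/r$ is a power of the residue characteristic~$p$ times a divisor of $\NN v-1$. The standard lifting-the-exponent analysis should give two consequences. First, $p$ is forced to be the largest prime factor of~$n$, so there are at most $d$ such~$v$. Second, $\log|\Phi_n(\gamma)|_v^{-1}\le C(d)\log p$, since the valuation is bounded by $v(p)$ plus a ramification correction. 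Hence, if $u_n$ has no primitive divisor, the left-hand side in Step~1 is at most $C(d)\log n$.

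\textbf{Step 3 (conclusion).} Combining the two estimates gives $\varphi(n)\height(\gamma)\le C(d)\bigl(2^{\omega(n)}\height(\gamma)\log n+\log n\bigr)$. Dividing by $\height(\gamma)\ge c(d)$ yields $\varphi(n)\ll_d 2^{\omega(n)}\log n$. Since $\varphi(n)\gg n^{1-\epsilon}$ and $2^{\omega(n)}=n^{o(1)}$, this bounds $n$ effectively in terms of~$d$.

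The main obstacle is Step~1 at infinite places with $|\gamma|_v=1$, where the values $\gamma^k$ can lie very close to~$1$. The whole argument depends on Baker's bound there being linear in $\height(\gamma)$ and only logarithmic in~$k$, so that this loss can be absorbed into the main term $\varphi(n)\height(\gamma)$. Step~2 is classical but fiddly in the ramified case, and I would follow Schinzel's treatment there.
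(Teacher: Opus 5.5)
The paper gives no proof of this statement; it quotes it from Schinzel~\cite{Sc74} (see also \cite{St77} and \cite[Theorem~4.2]{BL21}). Your outline is the classical Schinzel--Stewart argument, and Steps~1 and~3 are sound. At infinite~$v$, Theorem~\ref{thbaker} with $m=1$ already gives $|\gamma^k-1|_v\ge e^{-C(d)\height'(\gamma)\log^\ast k}$, so the $-1$ is unnecessary. Here $\height'$ has to be read as $\max\{\height,1/d\}$, and $\height(\gamma)\ge c(d)$ turns this into the loss $O_d(\height(\gamma)\log^\ast k)$ you need. Step~2, however, contains one false claim and one inaccurate one. Its conclusion survives once both are repaired.

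First, ``$p$ is the largest prime factor of~$n$'' is a fact about~$\Z$ that fails over number fields. The reason is that the order~$r$ of~$\gamma$ modulo~$\gerp_v$ divides $\NN v-1=p^{f_v}-1$, not $p-1$. For example, in $K=\Q(\zeta_3)$ the prime~$2$ is inert with residue field $\mathbb{F}_4$, and $\gamma=2+\zeta_3$ has order~$3$ there. So the place above~$2$ divides $\Phi_{3\cdot 2^e}(\gamma)$ and is a non-primitive divisor of $u_{3\cdot 2^e}$ for every $e\ge1$, while $3$ is the largest prime factor of $3\cdot2^e$. In general several primes $p\mid n$ can contribute, so ``at most $d$ such~$v$'' is unjustified. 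The correct structural fact is this: write $n=n'p^e$ with $p\nmid n'$. Then $\Phi_n\equiv\Phi_{n'}^{\ph(p^e)}\bmod p$ and $\Phi_{n'}$ is separable mod~$p$, so $r=n'$, and non-primitivity means $e\ge1$. You do not need uniqueness of~$p$. If $\operatorname{ord}_v\Phi_n(\gamma)\le e_v$ for each such $v\mid p$, then $\sum_{v\mid p}\frac{d_v}{d}\log|\Phi_n(\gamma)|_v^{-1}\le\log p$, and summing over $p\mid n$ gives at most $\log n$.

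Second, the valuation is not ``$v(p)$ plus a ramification correction''. Set $\beta:=\gamma^{n/p}$ and $t:=\operatorname{ord}_v(\beta-1)\ge1$. Then $\operatorname{ord}_v\Phi_n(\gamma)=\operatorname{ord}_v\bigl((\beta^p-1)/(\beta-1)\bigr)$, and this equals $\min\{e_v,(p-1)t\}\le e_v$ \emph{unless} $(p-1)t=e_v$. In that exceptional case the valuation is unbounded: take $K=\Q(\zeta_3)$, $\gamma=\zeta_3\bigl(1+(1-\zeta_3)^N\bigr)$ with large $N$, and $n=3$. What rules this out is the largeness of~$n$:
\begin{itemize}
\item The exceptional case forces $p\le e_v+1\le d+1$, hence $r<p^d$.
\item Moreover $t\ge e$, since each $p$-th power raises the valuation of $\gamma^{rp^j}-1$ by at least one.
\item Since $t=e_v/(p-1)\le d$, the exceptional case needs $e\le d$, and then $n<(d+1)^{2d}$.
\end{itemize}
Alternatively, for the primes $p\le d+1$ you can use the $p$-adic case of Theorem~\ref{thbaker}, whose constant then depends only on~$d$. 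This gives a contribution $O_d(\height(\gamma)\log n)$, which is absorbed into the error term of Step~1. With these two repairs your proof is complete.
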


Let~$S$ be a finite subset of $M_K$. Proposition~\ref{prprprdiv} implies that, in the set-up of Theorem~\ref{thprim},  if $u_n$ has a primitive divisor ${v\in S}$ then ${n \le \max\{\NN v-1: v\in S\}}$. We obtain the following consequence. 

\begin{corollary}
\label{coprim}
In the set-up of Theorem~\ref{thprim}, let~$S$ be a finite subset of $M_K$. If $u_n$ does not admit a primitive divisor ${v\notin S}$, then~$n$ is effectively bounded in terms of~$d$ and~$S$.  
\end{corollary}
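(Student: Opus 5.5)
The statement is Corollary~\ref{coprim}, which follows quickly from Theorem~\ref{thprim} and Proposition~\ref{prprprdiv}. Set $N:=\max\{\NN v-1: v\in S \text{ finite}\}$, or $N:=0$ if $S$ contains no finite primes. This is effectively computable from~$S$. Let $C(d)$ be the effective bound from Theorem~\ref{thprim}. We claim that if $u_n$ has no primitive divisor outside~$S$, then $n\le\max\{C(d),N\}$.

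The argument is a two-way case split. If $u_n$ has no primitive divisor at all, Theorem~\ref{thprim} gives $n\le C(d)$ directly. Otherwise $u_n$ has a primitive divisor~$v$, and by hypothesis $v\in S$. A primitive divisor is finite by definition. Proposition~\ref{prprprdiv} says that the image of~$\gamma$ in $(\OO_v/\gerp_v)^\times$ has order exactly~$n$. Hence $n\mid \NN v-1$, so $n\le \NN v-1\le N$.

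Both cases give $n\le \max\{C(d),N\}$, which depends effectively only on~$d$ and~$S$. There is no real obstacle here. The only point to check is that Proposition~\ref{prprprdiv} applies, i.e.\ that a primitive divisor~$v$ satisfies $|\gamma|_v=1$, so that~$\gamma$ reduces to a unit modulo~$\gerp_v$. This is built into the definition of a primitive divisor.
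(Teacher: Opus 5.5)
Your proposal is correct and matches the paper's argument. The paper likewise notes, via Proposition~\ref{prprprdiv}, that a primitive divisor $v\in S$ forces $n\le \NN v-1$, and that if $u_n$ has no primitive divisor at all then Theorem~\ref{thprim} bounds $n$.
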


If ${\gamma\in \OO_S}$ then one can also bound $\height(\gamma)$. 


\begin{proposition}
\label{prprdiveverywhere}
Let~$K$ be a number field of degree~$d$ and~$S$ a finite subset of~$M_K$.  
Let ${\gamma\in \OO_S^\times}$  and let $(u_k)$ be as in Theorem~\ref{thprim}.   If there exists ${n\ge 1}$ such that $u_n$ does not have a primitive divisor ${v \notin S}$, then $\height(\gamma)$ is effectively bounded in terms of~$d$ and~$S$. 
\end{proposition}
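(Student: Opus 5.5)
We need to prove: if $\gamma\in\OO_S^\times$ (not a root of unity, implicitly) and some $u_n=\gamma^n-1$ has no primitive divisor outside~$S$, then $\height(\gamma)$ is effectively bounded in terms of $d$ and~$S$.

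The plan is to combine Corollary~\ref{coprim} with Theorem~\ref{thunits}. Corollary~\ref{coprim} bounds~$n$ effectively in terms of $d$ and~$S$, say ${n\le N(d,S)}$. It then remains, for each fixed ${n\le N}$, to bound $\height(\gamma)$. If $\gamma$ is a root of unity there is nothing to prove, since ${\height(\gamma)=0}$. So we assume it is not.

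For fixed~$n$ we look at ${\Phi_n(\gamma)}$. The key claim is that, because $u_n$ has no primitive divisor ${v\notin S}$, the number $\Phi_n(\gamma)$ is an $S$-unit up to a bounded factor. We argue as follows.

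\begin{itemize}
\item Take a finite ${v\notin S}$ with ${|\Phi_n(\gamma)|_v<1}$. Since ${|\gamma|_v=1}$, the reduction of~$\gamma$ has some order ${k\mid n}$, and $v$ is a primitive divisor of~$u_k$.
\item If ${k=n}$, this contradicts the hypothesis, so ${k<n}$.
\item Then $\gamma$ reduces to a root of both $T^k-1$ and $\Phi_n(T)$ modulo~$\gerp_v$.
\item The resultant of $\Phi_n$ and $T^k-1$ (for ${k\mid n}$, ${k<n}$) divides a power of~$n$, because the distinct $n$th roots of unity have differences dividing~$n$.
\end{itemize}

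Hence every such $v$ lies above a prime dividing~$n$. Enlarging $S$ to $S'$, consisting of $S$ together with the primes above ${p\le N}$, we get ${\Phi_n(\gamma)\in\OO_{S'}^\times}$. The set $S'$ is still effectively determined by $d$ and~$S$.

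Now Corollary~\ref{corat} applies with ${\Lambda=\OO_{S'}^\times}$ and ${F=\Phi_n}$. This is legitimate because $\Phi_n$ has roots in $\bar\Q^\times$ and is not of the form $AT^m$. It gives an effective bound for $\height(\gamma)$ in terms of $n$ and~$S'$. Taking the maximum over ${n\le N}$ finishes the proof.

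If $S'$ is a concern: Corollary~\ref{corat} needs $\Lambda$ inside a number field containing the roots of~$\Phi_n$. We can pass to $K(\zeta_n)$, whose degree is at most $d\varphi(n)$ and hence bounded. Its $S$-units contain those of $K$, and all dependencies remain effective.

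The main obstacle is the local step: showing that a prime ${v\notin S}$ dividing $\Phi_n(\gamma)$ either is primitive or lies above a prime divisor of~$n$. The first thing to try is the standard fact that ${v\mid\Phi_n(\gamma)}$ with ${v\nmid n}$ forces the order of $\gamma$ modulo~$\gerp_v$ to be exactly~$n$. This holds because $\Phi_n$ is separable modulo $p\nmid n$. The hypothesis then rules this case out.

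A simpler alternative avoids cyclotomic considerations altogether. Since $n$ is bounded, every prime ${v\notin S}$ dividing ${\gamma^n-1}$ is a primitive divisor of some $u_k$ with ${k\le n}$. Choosing ${k=1}$ instead, the primes dividing ${\gamma-1}$ can be handled directly. But the $\Phi_n$ route is cleaner, so we keep it.
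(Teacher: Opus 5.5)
Your proof is correct, and its first half matches the paper's. The paper also bounds $n$ by Corollary~\ref{coprim}, enlarges $S$ to $S'$ by adding the primes above divisors of $n$, and uses a resultant argument to show $|\Phi_n(\gamma)|_v=1$ for $v\notin S'$. Taking $k$ to be the exact order of $\gamma$ modulo $\gerp_v$ makes that step slightly tidier.

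The routes diverge at the end. The paper stays in $K$ and argues with heights:
\begin{itemize}
\item Outside $S'$ one has $\height_v(\gamma^n-1)=\height_v(\Psi_n(\gamma))$, where $\Psi_n=(T^n-1)/\Phi_n$.
\item At the finitely many places of $S'$, Baker's bound~\eqref{ecobaker} gives $\height_v(\gamma^n-1)\ll\log^\ast\height(\gamma)$.
\item Summing and applying Proposition~\ref{prhpolx} gives $n\height(\gamma)\le (n-\ph(n))\height(\gamma)+O(\log^\ast\height(\gamma))$, which bounds $\height(\gamma)$.
\end{itemize}

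You instead feed $\Phi_n(\gamma)\in\OO_{S'}^\times$ into Corollary~\ref{corat}. In effect, this observes that $\gamma-\zeta_n$ divides the unit $\Phi_n(\gamma)$ in the ring of $S'$-integers of $K(\zeta_n)$, so it is itself a unit. A binary unit equation (Theorem~\ref{thunits}) then bounds $\height(\gamma)$.

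Both routes ultimately rest on Baker's inequality. Yours is more modular and reuses a result already proved, at the mild cost of passing to $K(\zeta_n)$; this is harmless because $n$ is bounded. The paper's route needs no field extension and no particular linear factor of $\Phi_n$. It also shows directly why the argument works: $\Phi_n(\gamma)$ carries a proportion $\ph(n)/n$ of the height of $\gamma^n-1$, all of it on $S'$, where Baker makes it small.

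Your closing ``simpler alternative'' paragraph is not a valid argument as written and should be deleted; your main proof does not depend on it.
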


\begin{proof}[Proof of Proposition~\ref{prprdiveverywhere}]
Let~$n$ be such that $u_n$ does not have a primitive divisor ${v\notin S}$. Then ${n\ll 1}$ by Corollary~\ref{coprim}, where the implied constants in this proof depend on~$S$ and~$d$. We denote
${S':= S\cup \{v\in M_K: v\mid n\}}$. 
We claim that 
\begin{equation}
\label{ecycbig}
|\Phi_n(\gamma)|_v=1 \qquad (v\notin S').
\end{equation}
Thus, fix ${v\notin S'}$. If ${|\gamma^n-1|_v=1}$ then there is nothing to prove. 
Now assume that ${|\gamma^n-1|_v <1}$. Since~$v$ is not a primitive divisor, there exists ${m<n }$ such that ${|\gamma^m-1|_v <1}$. If  ${|\Phi_n(\gamma)|_v<1}$, then ${|R|_v <1}$, where we denote by~$R$ the resultant of the polynomials 
$\Phi_n(T)$ and ${T^m-1}$. 
If ${\xi, \xi'}$ are distinct roots of  unity of order dividing~$n$ then ${\xi-\xi' \mid n}$. This implies that ${R\mid n^{n}}$. Hence, ${|\Phi_n(\gamma)|_v<1}$ implies that ${v\mid n}$, which contradicts the assumption  ${v\in S'}$. This proves~\eqref{ecycbig}.

One can rewrite~\eqref{ecycbig} as
\begin{equation}
\label{evnotins}
\height_v(\gamma^n-1) = \height_v\bigl(\Psi_n(\gamma)\bigr) \qquad (v\notin S'), 
\end{equation}
where 
${\Psi_n(T):= (T^n-1)/\Phi_n(T)}$. 
Also, using~\eqref{ecobaker} with ${x:=\gamma^n}$ and remembering that ${n\ll 1}$, we obtain 
$$
\sum_{v\in S'}\height_v(\gamma^n-1) \ll \log^\ast \height(\gamma). 
$$
Combining this with~\eqref{evnotins}, we obtain 
$$
\height(\gamma^n-1) \le \height\bigl(\Psi_n(\gamma)\bigr)  + O(\log^\ast \height(\gamma)). 
$$
Now we are going to apply Proposition~\ref{prhpolx} with  ${x:=\gamma}$, with ${F(X):=X^n-1}$ on the left, and with ${F(X):=\Psi_n(X)}$ on the right. Noting that ${\deg\Psi_n=n-\ph(n)}$ (where $\ph(\cdot)$ is Euler's totient), we obtain 
$$
n\height(\gamma) \le \bigl(n-\ph(n)\bigr)\height(\gamma) +O(\log^\ast \height(\gamma)). 
$$
This bounds $\height(\gamma)$. 
\end{proof}

\subsection{Proof of Theorem~\ref{thmodgthree}}
\label{ssproofmodg}
In this subsection we prove Theorem~\ref{thmodgthree}. By the hypothesis, there exist integers ${a,b,c}$, not all~$0$,   such that 
\begin{equation}
\label{exaybzclam}
(x-1)^a(y-1)^b(z-1)^c\in  \Lambda. 
\end{equation}
Moreover, all of the exponents~$a$,~$b$ and~$c$ are not~$0$, because any two of the numbers ${x-1,\ y-1, \ z-1}$ are multiplicatively independent modulo~$\Lambda$. For the same reason we have
\begin{align}
\label{enotinsqrt}
& x-1,\ y-1,\ z-1 \notin \sqrt\Lambda, \\
\label{enexpmone}
& y\ne x^{\pm1}, \quad z\ne x^{\pm1}, \quad z\ne y^{\pm1}.
\end{align}

\begin{remark}
\label{reproportional}
For the same reason,  if $(a',b',c')$ is another triple of integers such that ${(x-1)^{a'}(y-1)^{b'}(z-1)^{c'}\in  \Lambda}$, then $(a',b',c')$ is proportional to $(a,b,c)$; that is, ${a'/a=b'/b=c'/c}$. 
\end{remark}

\subsubsection{Preparations for the proof of Theorem~\ref{thmodgthree}}

We start from two observation similar to those in the beginning of Subsection~\ref{ssmodgtwo}.  First of all, if we bound the height of one of the numbers $x,y,z$ then the bound the other two as well.

\begin{proposition}
\label{prhxhybthree}
Let ${A>0}$. In the set-up of Theorem~\ref{thmodgthree}, assume that ${\height(x) \le A}$. Then $\height(y)$ and $\height(z)$ are bounded in terms of~$A$ and~$\Lambda$. 
\end{proposition}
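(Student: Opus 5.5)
Bounding $\height(x)$ puts $x-1$ into an enlarged finitely generated group. The dependence relation~\eqref{exaybzclam} then reduces to a two-variable relation between $y-1$ and $z-1$ modulo that group, and Theorem~\ref{thmodgtwo} (applied to the enlarged group) handles it.

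Let $K$ be the number field generated by $\Lambda$. Define $\Lambda'$ to be the subgroup of $K^\times$ generated by $\Lambda$ together with all $\alpha\in K^\times$ satisfying $\height(\alpha)\le A+\log 2$. By Northcott's theorem there are only finitely many such $\alpha$, so $\Lambda'$ is finitely generated and depends only on $A$ and $\Lambda$. Since $\height(x-1)\le \height(x)+\log 2\le A+\log 2$, we get $x-1\in\Lambda'$. Relation~\eqref{exaybzclam} then gives
$$
(y-1)^b(z-1)^c\in\Lambda',
$$
with $b,c\ne 0$. So $y-1$ and $z-1$ are multiplicatively dependent modulo~$\Lambda'$. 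Note also $y,z\in\Lambda\le\Lambda'$.

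We now split into cases.

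\emph{Case 1: $y-1\in\sqrt{\Lambda'}$.} Theorem~\ref{thmodgone}, applied with $\Lambda'$, bounds $\height(y)$ in terms of $\Lambda'$. Since $c\ne0$, we also get $(z-1)^{c}\in\sqrt{\Lambda'}$, hence $z-1\in\sqrt{\Lambda'}$, and Theorem~\ref{thmodgone} bounds $\height(z)$ as well.

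\emph{Case 2: $z-1\in\sqrt{\Lambda'}$.} This is symmetric to Case 1.

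\emph{Case 3: neither $y-1$ nor $z-1$ lies in $\sqrt{\Lambda'}$.} Then $y,z$ satisfy the hypotheses of Theorem~\ref{thmodgtwo} with respect to $\Lambda'$. That theorem says that, apart from a finite set depending only on $\Lambda'$ (hence on $A$ and $\Lambda$), we have $z=y^{\pm1}$. But $z=y^{\pm1}$ is excluded by~\eqref{enexpmone}. So only the finitely many exceptional pairs $(y,z)$ remain, and their heights are bounded in terms of $A$ and $\Lambda$.

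In all cases $\height(y)$ and $\height(z)$ are bounded in terms of $A$ and $\Lambda$. Case 3 uses the non-effective Theorem~\ref{thmodgtwo}, so the bound is non-effective in general, which the statement does not forbid.

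The only delicate point is making sure the finite exceptional set in Case 3 depends only on $A$ and $\Lambda$. This holds because $\Lambda'$ is determined by them, and Theorem~\ref{thmodgtwo} depends only on the group.
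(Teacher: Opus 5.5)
Your proposal is correct and is essentially the paper's own proof: enlarge $\Lambda$ to $\Lambda'$ via Northcott so that $x-1\in\Lambda'$, then apply Theorem~\ref{thmodgone} or Theorem~\ref{thmodgtwo} to $y-1$ and $z-1$ modulo $\Lambda'$, with~\eqref{enexpmone} excluding $z=y^{\pm1}$. Your Cases~1 and~2 merely expand the paper's one-line remark that, since $b,c\ne0$, the numbers $y-1$ and $z-1$ lie in $\sqrt{\Lambda'}$ either both or neither.
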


\begin{proof}
As in the proof of Proposition~\ref{prhxhyb}, we define~$\Lambda'$ as the multiplicative group generated by~$\Lambda$ and by all  ${\alpha \in K^\times}$ satisfying ${\height(\alpha) A+\log 2}$. We again have  ${x-1\in \Lambda'}$, and~\eqref{exaybzclam} implies that ${y-1}$ and ${z-1}$ are multiplicatively dependent modulo~$\Lambda'$. Since ${b,c\ne 0}$, either both ${y-1}$ and ${z-1}$ belong to $\sqrt{\Lambda'}$, or both do not. In the former case both $\height(y)$ and $\height(z)$ are bounded by Theorem~\ref{thmodgone}; in the latter case they are bounded by Theorem~\ref{thmodgtwo}, because ${z\ne y^{\pm1}}$, see~\eqref{enexpmone}. 
\end{proof}

The second observation is that we can replace~$\Lambda$ by any bigger finitely generated group. 

\begin{proposition}
\label{prreplacethree}
Let $\Lambda'$ be a finitely generated subgroup of $K^\times$ containing~$\Lambda$. 
Assume that Theorem~\ref{thmodgthree} holds with~$\Lambda'$ instead of~$\Lambda$. Then it holds with~$\Lambda$ as well. 
\end{proposition}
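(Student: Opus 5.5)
The plan mirrors the proof of Proposition~\ref{prreplace}. We must show the following. Let ${x,y,z\in \Lambda}$ satisfy the hypotheses of Theorem~\ref{thmodgthree} with respect to~$\Lambda$: they are distinct from~$1$, the numbers ${x-1,\ y-1,\ z-1}$ satisfy~\eqref{exaybzclam} with ${a,b,c\ne 0}$, and any two of them are multiplicatively independent modulo~$\Lambda$. Then, with finitely many exceptions, they satisfy the same hypotheses with respect to~$\Lambda'$. Once this is shown, the conclusion of Theorem~\ref{thmodgthree} for~$\Lambda'$, namely ${y=-x^{\pm1}}$ and ${z=x^{\pm2}}$ up to permutation, applies directly. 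This conclusion does not refer to the group at all, and the finitely many exceptions for~$\Lambda'$ remain finitely many for~$\Lambda$.

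Clearly ${x,y,z\in \Lambda'}$, and dependence modulo~$\Lambda$ implies dependence modulo~$\Lambda'$. So the only thing that can fail is pairwise independence modulo~$\Lambda'$. Assume, say, that ${x-1}$ and ${y-1}$ are multiplicatively dependent modulo~$\Lambda'$; the other pairs are symmetric. We split into two cases.

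\emph{Case 1: one of ${x-1}$, ${y-1}$ lies in $\sqrt{\Lambda'}$.} Theorem~\ref{thmodgone}, applied with~$\Lambda'$, bounds the height of that element. Proposition~\ref{prhxhybthree}, applied with the original~$\Lambda$, then bounds all three heights. By Northcott's theorem (all three numbers lie in~$K$), this gives finitely many triples.

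\emph{Case 2: neither ${x-1}$ nor ${y-1}$ lies in $\sqrt{\Lambda'}$.} Then Theorem~\ref{thmodgtwo}, applied with~$\Lambda'$, shows that, with finitely many exceptions, ${x=y^{\pm1}}$. This contradicts~\eqref{enexpmone}, which holds because of the independence hypotheses modulo~$\Lambda$. Hence only finitely many triples occur here as well, again using Proposition~\ref{prhxhybthree} to pass from a bound on $\height(x)$ to bounds on the whole triple.

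Combining both cases, outside a finite set the triple satisfies all hypotheses of Theorem~\ref{thmodgthree} with~$\Lambda'$, and the result follows. There is no real obstacle here. The only point needing care is that every finiteness claim is stated in terms of bounded heights of $x,y,z\in K$, so that Northcott's theorem turns them into finitely many triples.
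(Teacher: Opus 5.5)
Your proposal is correct and follows essentially the same route as the paper. You reduce to showing that, outside finitely many triples, $x-1,\ y-1,\ z-1$ remain pairwise multiplicatively independent modulo~$\Lambda'$, and you treat a pair that is dependent modulo~$\Lambda'$ either by Theorem~\ref{thmodgone} (if one of its members lies in $\sqrt{\Lambda'}$) or by Theorem~\ref{thmodgtwo} together with~\eqref{enexpmone} (otherwise), closing with Proposition~\ref{prhxhybthree}; your case split also writes out the symmetric case $y-1\in\sqrt{\Lambda'}$, which the paper leaves implicit.
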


\begin{proof}
We have to show the following:  if ${x,y,z\in \Lambda}$ satisfy the hypothesis of Theorem~\ref{thmodgthree} with~$\Lambda$, then, with finitely many exceptions, any two of the numbers ${x-1,\ y-1, \ z-1}$ are multiplicatively independent modulo~$\Lambda'$. 

Thus, assume that ${x-1}$ and ${y-1}$ are multiplicatively dependent modulo~$\Lambda'$. Let us show that the heights of $x,y,z$ are bounded. 
If ${x-1\in \sqrt{\Lambda'}}$ then all the three heights are bounded by Theorem~\ref{thmodgone} and Proposition~\ref{prhxhybthree}. If ${x-1, y-1\notin  \sqrt{\Lambda'}}$ then the heights of~$x$ and~$y$ are bounded by Theorem~\ref{thmodgtwo} because ${y\ne x^{\pm1}}$, and $\height(z)$ is bounded by Proposition~\ref{prhxhybthree}.
\end{proof}

Let~$\mu$ be the order of the group of roots of unity of~$K$, and~$S$ a finite subset of $M_K$ such that ${\Lambda\le \OO_S^\times}$ and 
\begin{equation}
\label{emuinost}
\mu \in \OO_S^\times. 
\end{equation}
Later, we will impose one more condition on~$S$. In the sequel we will assume that ${\Lambda=\OO_S^\times}$;  that is, ${x,y,z\in \OO_S^\times}$ and there exist non-zero integers $a,b,c$ such that
\begin{equation}
\label{econdmdep}
(x-1)^a(y-1)^b(z-1)^{c}\in \OO_S^\times. 
\end{equation}
Note that we may assume that 
\begin{equation}
\label{ecoprimeabc}
\gcd(a,b,c)=1,
\end{equation}
because ${\sqrt{\OO_S^\times}\cap K^\times=\OO_S^\times}$, and for the same reason~\eqref{enotinsqrt} becomes 
\begin{equation}
\label{enotinost}
x-1,\ y-1,\ z-1 \notin\OO_S^\times. 
\end{equation}

Denote by $T_x$ the set of ${v \in M_K\smallsetminus S}$ such that ${|x-1|_v <1}$ (or, equivalently, such that ${\height_v(x-1)>0}$). Similarly, we define $T_y$ and $T_z$. Note that
\begin{equation}
\label{enonempty}
T_x,T_y,T_z \ne \varnothing; 
\end{equation}
indeed, if ${T_x=\varnothing }$ then ${x-1\in \OO_S^\times}$, contradicting~\eqref{enotinost}.

Without loss of generality we may assume that either ${a,b, c>0}$ or 
\begin{equation}
\label{eabpcn}
a,b >0, \quad c<0.
\end{equation}
If ${a,b, c>0}$ then~\eqref{econdmdep} implies that each of ${x-1}$, ${y-1}$ and  ${z-1}$ is an $S$-unit, contradicting~\eqref{enotinost}. Hence we have~\eqref{eabpcn}, and we can rewrite~\eqref{econdmdep} as 
\begin{equation}
\label{ehard}
(x-1)^a (y-1)^{b}(z-1)^{-|c|} \in \OO_S^\times.  
\end{equation}
The fundamental consequence of~\eqref{ehard} is that 
\begin{equation}
\label{esets}
T_x \cup T_y = T_z. 
\end{equation} 
Indeed, if ${v\notin S}$ then we have both ${|x-1|_v\le 1}$ and ${|y-1|_v \le 1}$. Hence~\eqref{ehard} implies that 
${|z-1|_v<1}$ if and only if  $|x-1|_v<1$ or $|y-1|_v<1$.

The following property will be repeatedly used. 

\begin{proposition}
\label{prmultdepen}
Assume that~$x$ and~$z$ are multiplicatively dependent. Then, with finitely many exceptions,  ${z=x^n}$ for some  integer~$n$ satisfying ${|n|\ge 2}$. 
\end{proposition}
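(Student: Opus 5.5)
The plan is to exploit the inclusion $T_x\subseteq T_z$, which follows from~\eqref{esets}, together with the gcd computation of Proposition~\ref{prmultdep} and the primitive divisor results of Subsection~\ref{ssprim}. Throughout, $x,z\in\OO_S^\times$ and both are distinct from~$1$. By~\eqref{emuinost} the hypotheses of Proposition~\ref{prmultdep} are met for our~$S$.

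First I dispose of the torsion case. Suppose $\langle x,z\rangle$ contains a root of unity distinct from~$1$. Then item~\ref{ione} of Proposition~\ref{prmultdep} gives $\gcds(x-1,z-1)=1$, so $T_x\cap T_z=\varnothing$. But $T_x\subseteq T_z$ and $T_x\ne\varnothing$ by~\eqref{enonempty}, which is a contradiction. Hence $\langle x,z\rangle$ is infinite cyclic. Let $t\in\OO_S^\times$ be a generator, and write $x=t^m$ and $z=t^n$ with coprime non-zero integers $m,n$. Replacing $t$ by $t^{-1}$ if necessary, I may assume $m>0$. Since $(t^k-1)/(t^{|k|}-1)\in\OO_S^\times$, we get $T_z=\{v\notin S: |t^{|n|}-1|_v<1\}$.

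Next I show that $m=1$ up to finitely many exceptions. Assume $m\ge 2$, and consider the sequence $u_k=t^k-1$. If $u_m$ has no primitive divisor $v\notin S$, then Proposition~\ref{prprdiveverywhere} bounds $\height(t)$, hence $\height(x)=m\height(t)$ once $m$ is bounded. (Corollary~\ref{coprim} bounds $m$ in this situation anyway.) Proposition~\ref{prhxhybthree} then bounds $\height(y)$ and $\height(z)$, so by Northcott only finitely many triples arise. Otherwise $u_m$ has a primitive divisor $v\notin S$. Then $v\in T_x\subseteq T_z$, so $|t^{|n|}-1|_v<1$. By Proposition~\ref{prprprdiv} this gives $m\mid |n|$, which contradicts $\gcd(m,n)=1$ since $m\ge2$.

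Hence, with finitely many exceptions, $m=1$, so $x=t$ and $z=x^n$. Here $n\ne0$ because $z\ne1$, and $n\ne\pm1$ by~\eqref{enexpmone}. Therefore $|n|\ge2$. The main obstacle I anticipate is the torsion case. It is resolved entirely by item~\ref{ione} of Proposition~\ref{prmultdep}, which is exactly why $\mu\in\OO_S^\times$ was imposed; the rest is a direct application of Schinzel's theorem in the form of Proposition~\ref{prprdiveverywhere}.
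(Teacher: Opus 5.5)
Your proof is correct and follows the paper's argument essentially step for step. You rule out torsion in $\langle x,z\rangle$ using item~\ref{ione} of Proposition~\ref{prmultdep} together with $\varnothing\ne T_x\subseteq T_z$. You then use a primitive divisor $v\notin S$ of $t^m-1$ to force $m\mid |n|$, hence $m=1$, and you handle the case with no such primitive divisor through Proposition~\ref{prprdiveverywhere} and Proposition~\ref{prhxhybthree}, exactly as the paper does.
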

Similarly, if~$y$ and~$z$ are multiplicatively dependent, then, with finitely many exceptions,  ${z=y^r}$,  where ${|r|\ge2}$. 

\begin{proof}
By~\eqref{emuinost}, Proposition~\ref{prmultdep} applies. 
Hence, if the group ${\langle x,z\rangle}$ contains a non-trivial root of unity then we have ${\gcd_S(x-1,z-1) =1}$. It follows that ${T_x=\varnothing}$, which contradicts~\eqref{enonempty}. 

Thus, ${\langle x,z\rangle}$ is an infinite cyclic group. Let~$t$ be its generator. Then ${x=t^m}$ and ${z=t^n}$ for non-zero coprime  ${m,n\in \Z}$, and we may assume that  ${m>0}$. We are going to apply the results of Subsection~\ref{ssprim} to the sequence $(u_k)$ defined by ${u_k:=t^k-1}$.

If the term $u_m$ does not have a primitive divisor ${v\notin S}$, then  both~$m$ and $\height(t)$ are bounded, see Theorem~\ref{thprim} and Proposition~\ref{prprdiveverywhere}. Hence $\height(x)$ is bounded, and   Proposition~\ref{prhxhybthree} bounds the other two heights.

Now assume that the term $u_m$ has a primitive divisor ${v\notin S}$. Then ${v\in T_x}$, which implies that ${v\in T_z}$, that is, 
${|t^n-1|_v <1}$. It follows that ${|t^{|n|}-1|_v <1}$, which implies that~$m$ divides $|n|$, see Proposition~\ref{prprprdiv}. Since~$m$ and~$n$ are coprime, we must have ${m=1}$, which means that ${z=x^n}$. We cannot have ${|n|=1}$ by~\eqref{enexpmone}. Hence ${|n|\ge2}$.  
\end{proof}

We denote ${d:=[K:\Q]}$ and define
${\Sigma_x :=d^{-1}\sum_{v\in T_x} \log \NN v}$. 
We define similarly  $\Sigma_y$ and $\Sigma_z$.

\begin{proposition}
\label{prsigmas}
\begin{enumerate}
\item
\label{isxplussy}
We have ${\Sigma_z\le \Sigma_x+\Sigma_y}$.

\item
\label{isxlehx}
We have ${\Sigma_x \le \height(x-1)}$, and similarly for $\Sigma_y$ and $\Sigma_z$.

\item
\label{isxlelgcd}
We have ${\Sigma_x \le \lgcd(x-1, z-1)}$, and similarly for $\Sigma_y$. 

\item
\label{iconxlesx}
We have 
${\cond(x)+\cond(x^{-1})+\cond(x-1) \le  \Sigma_x +O(1)}$, where the implied constant depends on~$S$, and similarly for~$y$ and~$z$. 
\end{enumerate}
\end{proposition}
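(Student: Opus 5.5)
The four items are elementary and follow from the definitions of $T_x$, $\Sigma_x$, the conductor, and the local heights. We handle them one at a time.

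\emph{Item~\ref{isxplussy}.} By~\eqref{esets} we have $T_z=T_x\cup T_y$. Hence
$$
\Sigma_z=d^{-1}\sum_{v\in T_x\cup T_y}\log\NN v\le \Sigma_x+\Sigma_y .
$$

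\emph{Item~\ref{isxlehx}.} For $v\in T_x$ we have $\height_v(x-1)>0$, so~\eqref{eimplies} gives $d\height_v(x-1)\ge\log\NN v$. Summing over $v\in T_x$ and using that all local heights are non-negative, we get
$$
\Sigma_x\le\sum_{v\in T_x}\height_v(x-1)\le\height(x-1).
$$

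\emph{Item~\ref{isxlelgcd}.} Let $v\in T_x$. By~\eqref{esets}, $v\in T_z$ as well, so both $\height_v(x-1)$ and $\height_v(z-1)$ are positive. Applying~\eqref{eimplies} to each gives $d\min\{\height_v(x-1),\height_v(z-1)\}\ge\log\NN v$. Summing over $T_x$ and dropping the remaining non-negative terms of $\lgcd(x-1,z-1)$ yields the claim. The argument for $y$ is the same.

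\emph{Item~\ref{iconxlesx}.} Since $x\in\OO_S^\times$, the sets $\{v:|x|_v<1\}$ and $\{v:|x^{-1}|_v<1\}$ are contained in $S$. Therefore $\cond(x)+\cond(x^{-1})\le 2d^{-1}\sum_{v\in S}\log\NN v=O(1)$. For $\cond(x-1)$, split the primes $v$ with $|x-1|_v<1$ into those in $S$, which contribute at most $d^{-1}\sum_{v\in S}\log\NN v$, and those outside $S$. The latter are exactly the elements of $T_x$ and contribute $\Sigma_x$. Combining these bounds gives the inequality with an implied constant depending only on~$S$. The same argument applies to $y$ and $z$.

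There is no real obstacle here. The only point requiring care is in items~\ref{isxplussy} and~\ref{isxlelgcd}: both must use the set identity~\eqref{esets}, which is where the sign pattern~\eqref{eabpcn} enters.
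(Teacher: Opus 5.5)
Your proof is correct and follows the paper's argument essentially step for step. Item 1 comes from \eqref{esets}. Items 2 and 3 come from \eqref{eimplies} summed over $T_x$, with item 3 also using $T_x\subset T_z$. Item 4 uses that $\cond(x)$ and $\cond(x^{-1})$ are supported on $S$, while $\cond(x-1)$ splits into an $O(1)$ contribution from $S$ plus $\Sigma_x$.
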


Here $\cond(\cdot)$ denotes the conductor, introduced in Subsection~\ref{ssabc}.

\begin{proof}
Item~\ref{isxplussy} follows immediately from~\eqref{esets}. To prove item~\ref{isxlehx} note that ${\height_v(x-1) >0}$ for every ${v\in T_x}$, which implies that ${\height_v(x-1) \ge d^{-1}\log \NN v}$. It follows that
${\Sigma_x \le \sum_{v\in T_x}\height_v(x-1) \le \height(x-1)}$.

To prove item~\ref{isxlelgcd}, note that, since ${T_x \subset T_z}$, we have ${\height_v(z-1) \ge d^{-1}\log \NN v}$ for every ${v\in T_x}$. Hence 
$$
\Sigma_x \le \sum_{v\in T_x}\min\{\height_v(x-1), \height_v(z-1)\} \le \lgcd(x-1, z-1). 
$$
To prove item~\ref{iconxlesx}, note that, since ${x\in \OO_S^\times}$, we  have 
${\cond(x) , \cond(x^{-1}) \ll1}$. 
Also, 
\begin{equation*}
\cond(x-1)\le  d^{-1}\left(\sum_{v\in T_x} +\sum_{v\in S}\right)\log\NN v =\Sigma_x +O(1). 
\end{equation*}
In both cases the implied constant depends on~$S$. Summing up, we are done. 
\end{proof}

From now on, we are going to assume the validity of Conjecture~\ref{cowabc}.  Let~$\kappa$ be as in Conjecture~\ref{cowabc}. The following statement is crucial. 

\begin{proposition}
\label{prnorr}
With finitely many exceptions, one of the following holds:  ${z=x^n}$ with ${2\le |n|\le 3\kappa}$ or ${z=y^r}$ with ${2\le |r|\le 3\kappa}$. 
\end{proposition}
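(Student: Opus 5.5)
The plan is to play the weak $abc$-conjecture, applied to each of $x,y,z$, against the smallness of the $\Sigma$'s coming from Proposition~\ref{prsigmas} and Theorem~\ref{thcorza}. Throughout, $H:=\max\{\height(x),\height(y),\height(z)\}$. By Proposition~\ref{prhxhybthree} it suffices to bound any one of the heights, so ``finitely many exceptions'' will always mean ``$H$ is bounded''.

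\emph{Step 1: the $abc$ inequalities.} Conjecture~\ref{cowabc} together with item~\ref{iconxlesx} of Proposition~\ref{prsigmas} gives
$$
\height(x)\le \kappa\Sigma_x+O(1),
$$
and likewise for $y$ and $z$. Here the implied constants depend on $S$ and $\kappa$.

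\emph{Step 2: the doubly independent case is impossible.} Suppose $x,z$ are multiplicatively independent and $y,z$ are as well. Item~\ref{isxlelgcd} of Proposition~\ref{prsigmas} and Theorem~\ref{thcorza} give
$$
\Sigma_x,\Sigma_y\le \eps H+O_\eps(1).
$$
Item~\ref{isxplussy} then gives $\Sigma_z\le 2\eps H+O_\eps(1)$. Combined with Step~1 this yields $H\le 2\kappa\eps H+O_\eps(1)$. Taking $\eps:=1/(4\kappa)$ bounds $H$. Hence, up to finitely many exceptions, one of the pairs $(x,z)$, $(y,z)$ is dependent. Proposition~\ref{prmultdepen} then gives, again up to finitely many exceptions, $z=x^n$ or $z=y^r$ with $|n|,|r|\ge 2$. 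By symmetry assume $z=x^n$.

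\emph{Step 3: bounding $|n|$.} There are two subcases.

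\emph{Subcase 3(a): $y,z$ independent.} Theorem~\ref{thcorza} gives $\Sigma_y\le\eps\max\{\height(y),\height(z)\}+O_\eps(1)$. Feeding this into $\height(y)\le\kappa\Sigma_y+O(1)$ yields $\height(y)\le 2\kappa\eps\,\height(z)+O_\eps(1)$ for small $\eps$, and hence $\Sigma_y\le 2\eps\height(z)+O_\eps(1)$. Now use $\Sigma_x\le\height(x-1)\le\height(x)+\log 2$ (item~\ref{isxlehx}) and $\height(z)=|n|\height(x)$. Step~1 for $z$ and item~\ref{isxplussy} then give
$$
|n|\height(x)\le\kappa\height(x)+2\kappa\eps|n|\height(x)+O_\eps(1).
$$
With $\eps=1/(4\kappa)$ this reads $(|n|/2-\kappa)\height(x)\le O(1)$. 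If $|n|>3\kappa$, the left side is at least $(|n|/6)\height(x)$. Since $x\in\OO_S^\times$ is not a root of unity (otherwise Proposition~\ref{prmultdep} contradicts~\eqref{enonempty}), $\height(x)$ is bounded below by a positive constant depending on $K$. Hence both $|n|$ and $\height(x)$ are bounded, which gives only finitely many exceptions.

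\emph{Subcase 3(b): $z=y^r$ as well.} Assume both $|n|,|r|>3\kappa$. Then $\height(x),\height(y)<\height(z)/(3\kappa)$. Using item~\ref{isxlehx} for both $\Sigma_x$ and $\Sigma_y$ gives $\height(z)\le\kappa(\height(x)+\height(y))+O(1)\le\tfrac23\height(z)+O(1)$, which bounds $\height(z)$. So in this subcase $\min\{|n|,|r|\}\le 3\kappa$, which is exactly what is claimed.

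\emph{Main obstacle.} The delicate step is Subcase~3(a). There the Corvaja--Zannier bound for the independent pair $(y,z)$ has to be converted into a bound for $\Sigma_y$ in terms of $\height(z)$ alone, by absorbing $\height(y)$ via $abc$. This is what produces the factor slightly above $\kappa$, comfortably within $3\kappa$. One must also check that all constants in this step are uniform in $n$.
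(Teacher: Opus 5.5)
Your proof is correct and follows essentially the paper's route. You use Proposition~\ref{prmultdepen} to reduce to the doubly independent, mixed, and doubly dependent cases, and you dispose of each by combining Theorem~\ref{thcorza}, the estimates of Proposition~\ref{prsigmas}, and the weak $abc$ bound $\height(w)\le\kappa\Sigma_w+O(1)$. The differences are cosmetic: you work with $H=\max\{\height(x),\height(y),\height(z)\}$ instead of splitting on which height is largest, and in the mixed case you absorb $\height(y)$ via $abc$ instead of splitting on $\height(y)\ge\height(z)$ (your positive lower bound for $\height(x)$ is not needed, since $(|n|/6)\height(x)=\height(z)/6$ already bounds $\height(z)$).
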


\begin{proof}
By Proposition~\ref{prmultdepen},  we have to rule out, with finitely many exceptions, each of the following three cases:
\begin{align}
\label{eindep}
&\text{$z$ is multiplicatively independent with~$x$ and with~$y$};\\
\label{emixed}
&\text{$z$ is multiplicatively independent with~$x$ and ${z=y^r}$ with ${|r|\ge 3\kappa}$}\\
\label{edep}
&\text{${z=x^n=y^r}$ with ${|n|,|r|\ge 3\kappa}$}. 
\end{align} 
Fix ${\eps>0}$, to be specified later. Assume~\eqref{eindep}. If ${\height(z) \ge \height(x), \height(y)}$ then  Theorem~\ref{thcorza} implies that 
$$
\lgcd(x-1,z-1) \le \eps \height(z) +O_\eps(1), \qquad \lgcd(y-1,z-1) \le \eps \height(z) +O_\eps(1), 
$$
where the implied constants depend on~$\eps$ and~$S$.  Proposition~\ref{prsigmas} now implies that 
$$
\cond(z)+\cond(z^{-1})+\cond(z-1) \le 2\eps \height(z)+ O_\eps(1). 
$$
Using Conjecture~\ref{cowabc}, we obtain 
${\height(z) \le 2\kappa\eps \height(z)+O_\eps(1)}$. Specifying ${\eps := 1/3\kappa}$, we bound $\height(z)$. 

If, say, ${\height(x) \ge \height(z)}$ then ${\lgcd(x-1,z-1) \le \eps \height(x) +O_\eps(1)}$. We obtain 
\begin{equation}
\label{econxb}
\cond(x)+\cond(x^{-1})+\cond(x-1) \le \eps \height(x)+ O_\eps(1), 
\end{equation}
and we bound $\height(x)$ using Conjecture~\ref{cowabc}. Thus,~\eqref{eindep} may happen only for finitely many triples $(x,y,z)$.

Now assume~\eqref{emixed}. If ${\height(x) \ge \height(z)}$ then we again have~\eqref{econxb} and we are done. If ${\height(z) \ge \height(x)}$ then ${\lgcd(x-1,z-1) \le \eps \height(z) +O_\eps(1)}$. Also, 
$$
\Sigma_y \le \height(y-1) \le \height(y) + 1 = \frac{1}{|r|}\height(z)+1 \le \frac{1}{3\kappa}\height(z)+1. 
$$
We obtain 
$$
\cond(z)+\cond(z^{-1})+\cond(z-1) \le \left(\eps+ \frac{1}{3\kappa}\right)\height(z)+ O_\eps(1). 
$$
Conjecture~\ref{cowabc} now gives ${\height(z) \le (\kappa\eps+1/3) \height(z)+O_\eps(1)}$, and   we bound $\height(z)$ specifying ${\eps := 1/3\kappa}$. 

Finally let us assume~\eqref{edep}. Then 
$$
\Sigma_x \le \frac{1}{3\kappa}\height(z)+1, \qquad \Sigma_y \le \frac{1}{3\kappa}\height(z)+1. 
$$
Hence
$$
\cond(z)+\cond(z^{-1})+\cond(z-1) \le \frac{2}{3\kappa} \height(z)+ 2, 
$$
and  Conjecture~\ref{cowabc} implies that ${\height(z) \le (2/3) \height(z)+3\kappa}$, which bounds $\height(z)$. 
\end{proof}

\subsubsection{Completing the proof of Theorem~\ref{thmodgthree}}

Now we are ready to complete the proof of Theorem~\ref{thmodgthree}. So far, the only condition imposed on the set~$S$ was~\eqref{emuinost}.  From now on, we will also assume that~$S$ contains all~$v$ with underlying prime ${p\le 3\kappa}$; equivalently,
\begin{equation}
\label{enrinost}
m\in \OO_S^\times \qquad (m\in \Z, \quad 2\le |m|\le 3\kappa). 
\end{equation}

By Proposition~\ref{prnorr},  we may assume that ${z=y^r}$ with ${2\le |r|\le 3\kappa}$.  Using ${(y^r-1)/(y^{|r|}-1)\in \OO_S^\times}$, we  rewrite~\eqref{econdmdep} as
\begin{equation}
\label{econdzyr}
(x-1)^a(y-1)^b\bigl(y^{|r|}-1\bigr)^{c}\in \OO_S^\times. 
\end{equation} 
Since ${r\in \OO_S^\times}$ by~\eqref{enrinost}, Proposition~\ref{prgcdsres} implies that
\begin{equation}
\label{egcdsyyr}
\gcds\left(y-1, \frac{y^{|r|}-1}{y-1}\right)=1. 
\end{equation}
The proof of Theorem~\ref{thmodgthree}  splits into three cases, depending on the sign of ${b+c}$. 

\subsubsection*{The case ${b+c>0}$}

Assume first that ${b+c>0}$, and let us show that this is impossible. We can rewrite~\eqref{econdzyr} as 
$$
(x-1)^a(y-1)^{b+c}\left(\frac{y^{|r|}-1}{y-1}\right)^{-|c|}\in \OO_S^\times.
$$
Since ${b+c>0}$, this implies that ${y-1}$ divides ${\bigl((y^{|r|}-1)/(y-1)\bigr)^{|c|}}$ in the ring~$\OO_S$. From~\eqref{egcdsyyr} we conclude that ${y-1\in \OO_S^\times}$, which contradicts~\eqref{enotinost}.  Thus, we cannot have ${b+c>0}$.

\subsubsection*{The case ${b+c=0}$}

Now let us assume that ${b+c=0}$. We will prove that in this case, with finitely many exceptions, we have
\begin{equation}
\label{econclusion}
y=-x^{\pm1}, \quad z=y^{\pm2}. 
\end{equation}
When ${b+c=0}$, equation~\eqref{econdzyr} becomes 
\begin{equation}
\label{ebplusc=zero}
(x-1)^a\left(\frac{y^{|r|}-1}{y-1}\right)^{-b}\in \OO_S^\times.
\end{equation}
We have ${\gcd(a,b)=1}$ by~\eqref{ecoprimeabc}. It follows that 
$$
x-1 \in (K^\times)^b\OO_S^\times, \qquad \frac{y^{|r|}-1}{y-1} \in (K^\times)^a\OO_S^\times. 
$$
Now, if ${b>1}$ then $\height(x)$ is bounded by Theorem~\ref{thppz}, and if ${a>1}$ then $\height(y)$ is bounded by Corollary~\ref{coratpow}. 

Thus, with finitely many exceptions we have  
\begin{equation}
\label{eoneonemone}
a=b=1, \qquad c=-1. 
\end{equation}
In this case~\eqref{ebplusc=zero} can be rewritten as 
\begin{equation}
\label{esixtermeq}
(x-1)(y-1)=\gamma(y^{|r|}-1), 
\end{equation}
where ${\gamma \in \OO_S}$. It follows that 
\begin{equation}
\label{esolsix}
(xy,\ -x,\ -y,\ 1,\ -\gamma y^{|r|},\ \gamma) 
\end{equation}
is a solution of the equation ${z_1+\cdots+z_6=0}$ in ${z_1, \ldots, z_6\in \OO_S^\times}$. Theorem~\ref{thexp} implies that this equation has at most finitely many primitive solutions, up to equivalence. Since distinct solutions with ${z_4=1}$ are not equivalent, there are at most finitely many primitive solutions of the shape~\eqref{esolsix}. 

Now assume that~\eqref{esolsix} is a non-primitive solution. Since ${y^{|r|}\ne 1}$, a proper subset of the first four coordinates of~\eqref{esolsix} must have sum~$0$. None of the the $3$-term sums 
$$
-x-y-1, \quad xy-y-1, \quad xy-x-1, \quad xy-x-y
$$
may be~$0$, because vanishing of any of these sums implies that ${x-1,y-1\in \OO_S^\times}$, contradicting~\eqref{enotinost}. Among the six $2$-term sums, the following four 
$$
xy-x, \quad xy-y, \quad -x+1, \quad -y+1 
$$ 
cannot vanish either, because ${x,y\ne 1}$. Hence ${xy+1=0}$ or ${-x-y=0}$, that is,  ${y=-x^{\pm1}}$. 
Now~\eqref{esixtermeq} implies that
${(y^{|r|}-1)/(y^2-1) \in \OO_S^\times}$. 
When ${|r|\ne2}$, Corollary~\ref{corat} bounds the height of~$y$. This, with finitely many exceptions we have ${|r|=2}$, that is, ${z=y^{\pm2}}$, which, together with ${y=-x^{\pm1}}$,  shows that~\eqref{econclusion} holds with finitely many exceptions, as wanted. 

\subsubsection*{The case ${b+c<0}$}

We will show that ${b+c<0}$ can happen only for finitely many triples $(x,y,z)$ satisfying the hypothesis of Theorem~\ref{thmodgthree}.
This time we rewrite~\eqref{econdzyr} as 
$$
(x-1)^a(y-1)^{-|b+c|}\left(\frac{y^{|r|}-1}{y-1}\right)^{-|c|}\in \OO_S^\times, 
$$
which implies that ${T_y\subset T_x}$, and so ${T_z=T_x}$. We will use now the argument similar to the proof of Proposition~\ref{prnorr} to show that, with finitely many exceptions, 
\begin{equation}
\label{ezxn}
\text{${z=x^n}$ with ${2\le |n|\le 3\kappa}$}. 
\end{equation}
By Proposition~\ref{prmultdepen}, we have to rule out, with finitely many exceptions, each of the following two cases:
\begin{align}
\label{eindepxz}
&\text{$z$ is multiplicatively independent with~$x$};\\
\label{emixedxz}
&\text{${z=x^n}$ with ${|n|\ge 3\kappa}$}. 
\end{align} 
Fix ${\eps>0}$, to be specified later. In case~\eqref{eindepxz} we have
$$
\Sigma_x=\Sigma_z\le \lgcd(x-1,z-1) \le \eps\max\{\height(x), \height(z)\}+O_\eps(1). 
$$
Hence 
$$
\cond(z)+\cond(z^{-1})+\cond(z-1) \le \eps\max\{\height(x), \height(z)\}+O_\eps(1), 
$$
and Conjecture~\ref{cowabc} implies that ${\height(z) \le \kappa\eps\max\{\height(x), \height(z)\}+O_\eps(1)}$. In a similar fashion we bound $\height(x)$. We obtain 
$$
\max\{\height(x), \height(z)\} \le \kappa\eps\max\{\height(x), \height(z)\}+O_\eps(1), 
$$
and specifying ${\eps:=1/3\kappa}$, we bound both heights.

In case~\eqref{emixedxz} we bound 
$$
\Sigma_z=\Sigma_x \le \height(x-1)\le \height(x)+1 \le \frac{1}{3\kappa}\height(z)+1. 
$$
Hence 
$$
\cond(z)+\cond(z^{-1})+\cond(z-1) \le  \frac{1}{3\kappa}\height(z)+ O(1), 
$$
and Conjecture~\ref{cowabc} implies that ${\height(z) \le (1/3) \height(z)+O(1)}$, bounding $\height(z)$. 

Thus, we have~\eqref{ezxn} with finitely many many exception. Now, assuming~\eqref{ezxn}, we may argue as before, but with~$x$ and~$a$ replacing~$y$ and~$b$. We show that ${a+c>0}$ is impossible, and in the case ${a+c=0}$ we must have~\eqref{eoneonemone}, with finitely many exceptions, which is not possible because ${b+c<0}$. Thus, with finitely many exceptions we have ${a+c<0}$, which implies ${T_y\subset T_x}$, and we obtain ${T_y=T_x}$.

Proposition~\ref{prmultdep} implies that ${\langle x,y\rangle}$ is an infinite cyclic group: otherwise  
${\gcds(x-1,y-1)=1}$, which is impossible because ${T_x=T_y\ne \varnothing}$, see~\eqref{enonempty}. Let~$t$ be a generator of  ${\langle x,y\rangle}$; then ${x=t^\ell}$ and ${y=t^q}$, where~$\ell$ and~$q$ are coprime integers. Since~$\ell$ and~$q$ are coprime, ${z=t^{\ell n}=t^{qr}}$ implies that ${\ell\mid r}$ and ${q\mid n}$; in particular, ${q,\ell\le 3\kappa}$.

Since  ${\gcds(x-1,y-1)=t-1}$ (again by Proposition~\ref{prmultdep}), we have 
$$
\frac{t^\ell-1}{t-1}, \frac{t^q-1}{t-1} \in \OO_S^\times. 
$$
If ${|\ell|>1}$ or ${|q|>1}$ then Corollary~\ref{corat} bounds the height of~$t$. Hence, with finitely many exceptions,  we have ${|\ell|=|q|=1}$, that is, ${y=x^{\pm1}}$, which is impossible by~\eqref{enexpmone}. This proves that ${b+c<0}$ may happen only for finitely many triples $(x,y,z)$, as wanted.  

\subsubsection*{Conclusion}
Let us conclude: we showed that~\eqref{econclusion} holds for all but finitely many triples $(x,y,z)$ satisfying the hypothesis of   Theorem~\ref{thmodgthree}.  The theorem is proved. \qed

\subsection{A conjecture}
\label{ssconj}

Let~$\Lambda$ be a finitely generated subgroup of~$\bar\Q^\times$, and ${m\ge 2}$. We assume that ${\zeta_{m-1}\in \Lambda}$. 
In this subsection we give a conjectural description of ${x_1, \ldots, x_m\in \Lambda}$ such that ${x_1-1, \ \ldots, \ x_m-1}$ are multiplicatively dependent modulo~$\Lambda$. 

\begin{conjecture}
\label{comdgm}
Let ${x_1, \ldots, x_m\in \Lambda}$ be distinct from~$1$. Assume that the algebraic numbers ${x_1-1, \ \ldots, \ x_m-1}$ are multiplicatively dependent modulo~$\Lambda$, but any ${m-1}$ of these numbers are multiplicatively independent modulo~$\Lambda$. Then, with finitely many exceptions, after a permutation, we have
$$
x_k=(\zeta_{m-1}^{k-1} x_1)^{\pm1} \quad (k=2, \ldots, m-1), \qquad x_m=x_1^{\pm (m-1)}. 
$$ 
\end{conjecture}
Theorem~\ref{thmodgtwo} confirm this conjecture for ${m=2}$, and Theorem~\ref{thmodgthree} confirms it for ${m=3}$, assuming the weak $abc$. We believe that, generalizing the argument of Subsection~\ref{ssproofmodg}, one can deduce Conjecture~\ref{comdgm} from the weak $abc$, though, probably, some  new ideas might  be required. We also believe that a suitable generalization of the argument of Section~\ref{sproofmthree} shows that Conjecture~\ref{comain} follows from Conjecture~\ref{comdgm}.

\section{Proof of Theorem~\ref{thmthree}}
\label{sproofmthree}


Recall that~$K$ is a number field for which the weak $abc$-conjecture holds true, and~$\Gamma$,~$\Delta$ are finitely generated almost disjoint subgroups of~$K^\times$. We set ${\Lambda:=\sqrt{\langle \Gamma, \Delta, -1\rangle}\cap K^\times}$. It is a finitely generated subgroup of $K^\times$ with the property ${z^m \in \Lambda\Rightarrow z\in \Lambda}$ for ${z \in K^\times}$ and a non-zero integer~$m$.

Let $x_k$ and $y_k$ be as in the statement of Theorem~\ref{thmthree}: the three sums 
$$
x_1+y_1, \quad x_2+y_2, \quad x_3+y_3 
$$
are non-zero and multiplicatively dependent, but any two of them are independent. We define  ${z_k:=-x_k/y_k\in \Lambda}$. 
We claim that~$z_k$ are distinct from~$1$ and from each other: ${z_1,z_2,z_3 \ne 1}$, and 
\begin{equation}
\label{ezkdist}
z_j\ne z_k \qquad (1\le j<k\le 3). 
\end{equation}
Indeed, if ${z_k=1}$, then ${x_k+y_k=0}$, which contradicts the hypothesis of Theorem~\ref{thmthree}. And if, 
say, ${z_1=z_2}$ then ${x_1/x_2=y_1/y_2}$. Since the groups~$\Gamma$ and~$\Delta$ are almost disjoint, this implies that ${x_1/x_2=y_1/y_2}$ is a root of unity. It follows that ${x_1+y_1}$ and ${x_2+y_2}$ are multiplicatively dependent,  again contradicting the hypothesis.

By the hypothesis of Theorem~\ref{thmthree},
there exist non-zero  integers ${m_1,m_2,m_3}$ such that 
\begin{equation}
\label{emultdepsums}
(x_1+y_1)^{m_1}(x_2+y_2)^{m_2}(x_3+y_3)^{m_3}=1. 
\end{equation} 
Then 
\begin{equation}
\label{erelationzs}
(z_1-1)^{m_1}(z_2-1)^{m_2}(z_3-1)^{m_3}\in \Lambda. 
\end{equation}
If two of the numbers ${z_k-1}$ belong to~$\Lambda$ then the the third one also does, because the exponents~$m_k$ in~\eqref{erelationzs} are non-zero. In this case the height of every~$z_k$ is bounded by Theorem~\ref{thunits}, and Corollary~\ref{coadis} bounds the heights of every~$x_k$ and~$y_k$. 

From now on we assume that at most one of the numbers ${z_k-1}$ belongs to~$\Lambda$. After renumbering, we may assume that
\begin{equation}
\label{ezzninl}
z_2-1, \ z_3-1 \notin \Lambda
\end{equation}
If ${z_1-1\in \Lambda}$, then~$z_2$ and~$z_3$ satisfy the hypothesis of Theorem~\ref{thmodgtwo}, because in this case ${(z_2-1)^{m_2}(z_3-1)^{m_3}\in \Lambda}$. This implies that ${z_3=z_2^{\pm1}}$ with finitely many exceptions. But ${z_3\ne z_2}$ by~\eqref{ezkdist}, which shows that in the case ${z_1-1\in \Lambda}$ we have 
\begin{equation}
\label{ezthreeztwomone}
z_3=z_2^{-1}
\end{equation}
with finitely many exceptions. 

Now assume that ${z_1-1\notin \Lambda}$. Then there are two possibilities: either ${z_1,z_2,z_3}$ satisfy  the hypothesis of Theorem~\ref{thmodgthree}, in which case, after renumbering, we have 
\begin{equation}
\label{emc}
z_2 =-z_1^{\pm1}, \qquad z_3=z_1^{\pm2}, 
\end{equation}
with finitely many exceptions, 
or, after renumbering,~$z_2$ and~$z_3$ satisfy the hypothesis of Theorem~\ref{thmodgtwo}, in which case we again have~\eqref{ezthreeztwomone} with finitely many exceptions. 

Thus, with finitely many exceptions, a permutation of $z_1,z_2,z_3$ satisfies either~\eqref{ezthreeztwomone} or~\eqref{emc}.  We consider these cases separately.

\subsection{Case~\eqref{emc}}

Assume~\eqref{emc}. 
If both~$\pm$ are~$+$ then ${z_2=-z_1}$ and ${z_3=z_1^2}$; in other words, ${x_2/y_2=-x_1/y_1}$ and ${x_3/y_3=-(x_1/y_1)^2}$.  Using almost disjointness of~$\Gamma$ and~$\Delta$, we see that 
${\xi_2:=x_2/x_1=-y_2/y_1}$ and ${\xi_3:= x_3/x_1^2=-y_3/y_1^2}$ are roots of unity. This proves that~\eqref{emthree} holds with ${x:=x_1}$ and ${y:=-y_1}$. 

Now let us assume that in~\eqref{emc} we have  ${z_2=-z_1^{-1}}$ and ${z_3=z_2^2}$. Arguing as before, we find that  ${\xi_3:= x_3/x_1^2=-y_3/y_1^2}$ is a root of unity, and so is ${\xi_2:=-x_2x_1=y_2y_1}$. It follows that 
$$
x_2+y_2 = \xi_2\left(-\frac1{x_1}+\frac1{y_1}\right)= \xi_2\frac{x_1-y_1}{x_1y_1}, \qquad x_3+y_3 =x_1^2-y_1^2. 
$$
Substituting this into~\eqref{emultdepsums},  we obtain 
\begin{equation}
\label{ebadcase}
(x_1+y_1)^n(x_1-y_1)^r = \xi(x_1y_1)^s, \qquad  
\end{equation}
where
\begin{equation}
\label{enrs}
n:=m_1+m_3, \ r:= m_2+m_3, \ s:=m_2,
\end{equation}
and~$\xi$ is a root of unity. In the two remaining cases  
${z_2=-z_1, \ z_3=z_2^{-2} }$  and ${ z_2=-z_1^{-1}, \ z_3=z_2^{-2}}$,  
we again obtain~\eqref{ebadcase}, with~$n$ and~$r$ as in~\eqref{enrs}, but with 
$$
s:=2m_3 \quad \text{or}\quad s:=m_2+2m_3,
$$ 
depending on the case. Note that we cannot have ${n=r=s=0}$ in any of the cases, because this would imply ${m_1=m_2=m_3=0}$, a contradiction.

Let~$S$ a finite subset of~$M_K$  such that ${2\in \OO_S^\times}$ and ${\Lambda\le \OO_S^\times}$. We are going to show that~\eqref{ebadcase} with not all of $n,r,s$ zero may happen only for finitely many choices of our variables $x_1$ and~$y_1$. Since the other variable are expressed in terms of $x_1,y_1$ and roots of unity from the  field~$K$, we have finitely many choices for them as well.

If ${n=r=0}$ then ${s\ne 0}$ and $x_1y_1$ is a root of unity. By almost disjointness, both~$x_1$ and~$y_1$ must be roots of unity, which leaves only finitely many choices for $x_1$ and $y_1$.

If ${n>0 }$ and ${r\ge 0}$ then ${x_1+y_1\in \OO_S^\times}$, and the problem is reduces to the  unit equation, as in  the case~\eqref{eredtounit} in the proof of Theorem~\ref{thmtwo}. Thus, we again have only finitely many choices for~$x_1$ and~$y_1$. The case ${n\ge 0}$, ${r>0}$ is similar.

Now assume that~$n$ and~$r$ are non-zero integers of different signs; say, ${n>0}$ and ${r<0}$. Replacing~$r$ by $-r$, we obtain 
\begin{equation}
\label{enrdifs}
(x_1+y_1)^n = \lambda(x_1-y_1)^r
\end{equation}
with ${\lambda \in \OO_S^\times}$ and ${n,r>0}$. Since ${x_1, y_1\in \OO_S^\times}$, for every ${v\in M_K\smallsetminus S}$ we have 
$$
|x_1+y_1|_v, \ |x_1-y_1|_v \le 1. 
$$
If ${v\in M_K\smallsetminus S}$ is such that ${|x_1+y_1|_v <1}$ then ${|x_1-y_1|_v <1}$ by~\eqref{enrdifs}. It follows that
$$
|2x_1|_v=|(x_1+y_1)+(x_1-y_1)|_v<1.
$$ 
Since ${x_1\in \OO_S^\times}$, this implies that ${|2|_v<1}$. Hence ${v \in S}$ by our definition of~$S$, a contradiction. 

Thus, ${|x_1+y_1|_v=1}$ for every ${v\in M_K\smallsetminus S}$, and we again have
${x_1+y_1\in \OO_S^\times}$. As before, this implies that there are at most finitely many possible~$x_1$ and~$y_1$. This completes the proof in  case~\eqref{emc}.

\subsection{Case~\eqref{ezthreeztwomone}} 
In this case ${\xi:=x_2x_3=y_2y_3}$ is a root of unity by almost disjointness. Hence we have
\begin{equation}
\label{etto}
x_3=\xi x_2^{-1}, \qquad y_3=\xi y_2^{-1},
\end{equation}
which is~\eqref{ethreetwomone}. We only have to show that, with finitely many exceptions,  ${x_1+y_1}$  and $x_2y_2$  are multiplicatively dependent. 

Substituting~\eqref{etto} into~\eqref{emultdepsums}, we obtain
\begin{equation*}
(x_1+y_1)^{m_1} (x_2+y_2)^{m_2+m_3}=\xi'(x_2y_2)^{m_2}
\end{equation*}
with some root of unity~$\xi'$. 
If ${m_2+m_3=0}$ then we are done. Let us show that we have ${m_2+m_3=0}$ with finitely many exceptions. 

Assuming that ${m_2+m_3\ne 0}$, we write 
\begin{equation}
\label{emomtt}
(z_1-1)^{m_1}(z_2-1)^{m_2+m_3}\in \Lambda. 
\end{equation}
If ${z_1-1\in \Lambda}$ then ${z_2-1\in \Lambda}$ as well, by~\eqref{emomtt}, which contradicts~\eqref{ezzninl}. Hence  ${z_1-1\notin \Lambda}$, and Theorem~\ref{thmodgtwo} implies that ${z_2=z_1^{\pm1}}$ with finitely many exceptions. However, ${z_2=z_1}$ is impossible by~\eqref{ezkdist}, and ${z_2=z_1^{-1}}$ implies that ${z_1=z_3}$, again impossible. This proves that ${m_2+m_3=0}$ with finitely many exceptions, as wanted. 

Theorem~\ref{thmodgthree} is proved. \qed

{\footnotesize

\bibliographystyle{amsplain}
\bibliography{pillai_mult}

\bigskip

\noindent 
Yuri Bilu: 
Institut de Mathématiques de Bordeaux, Université de Bordeaux \& CNRS, Talence, France; 
\url{yuri@math.u-bordeaux.fr}

\bigskip

\noindent
Florian Luca:
Mathematics Division, Stellenbosch University, Stellenbosch, South Africa and Max-Planck Institute for Software Systems, Saarbr\"ucken, Germany; 
\url{fluca@sun.ac.za}
}

\end{document}